\theoremstyle{definition}
\newtheorem{theorem}{Theorem}[section]
\newtheorem{definition}[theorem]{Definition}
\newtheorem{lemma}[theorem]{Lemma}
\newtheorem{proposition}[theorem]{Proposition}
\newtheorem*{theorem*}{Theorem}
\theoremstyle{remark}
\newtheorem{remark}[theorem]{Remark}
\begin{document}

\title[Tropical currents, extremality, and approximation]{A tropical approach to a generalized Hodge conjecture for positive currents}
\author{Farhad Babaee and June Huh}
\address{
Universit\'e de Fribourg\\
Chemin du Mus\'ee 23\\
CH-1700 Fribourg, Switzerland\\}
\email{farhad.babaee@unifr.ch}
\address{Princeton University and Institute for Advanced Study \\ Einstein Drive \\ Princeton \\  NJ 08540 \\ USA}
\email{huh@princeton.edu}
\thanks{This research was conducted during the period Farhad Babaee was a PhD student at the Universities of Bordeaux and Padova, and postdoc at the Concordia University, Ecole Normale Sup\'erieure of Paris, and Universit\'e de Fribourg. He was partially supported by the SNSF:PP00P2~150552$\slash$1 grant. June Huh was supported by a Clay Research Fellowship and NSF Grant DMS-1128155.}
\begin{abstract}

Demailly  showed that the Hodge conjecture is equivalent to the statement that any $(p,p)$-dimensional closed current with rational cohomology class can be approximated by linear combinations of integration currents. Moreover, he showed that the Hodge conjecture follows from the statement that all strongly positive closed currents with rational cohomology class can be approximated by positive linear combinations of integration currents \cite{DemaillyHodge}. In this article, we find a current which does not verify the latter statement on a smooth projective variety for which the Hodge conjecture is known to hold.  To construct this current, we extend the framework of `tropical currents' introduced in \cite{Babaee} from tori to toric varieties. We discuss  extremality properties of tropical currents, and show that the cohomology class of a tropical current is the recession of its underlying tropical variety. The counterexample is obtained from a tropical surface in $\mathbb{R}^4$ whose intersection form does not have the right signature in terms of the Hodge index theorem. 
\end{abstract}
\maketitle
\section{Introduction}

The main goal of this article is to construct an example that does not satisfy a strong version of the Hodge conjecture for strongly positive currents introduced in \cite{DemaillyHodge}. To state our main results, we first recall some basic definitions, following \cite[Chapter I]{DemaillyBook1}. 

Let $X$ be a complex manifold of dimension $n$. If $k$ is a nonnegative integer, we denote by $\mathscr{D}^{k}(X)$ the space of smooth complex differential forms of degree $k$ with compact support, endowed with the inductive limit topology. The space of currents of dimension $k$ is the topological dual space $\mathscr{D}'_{k}(X)$, that is, the space of all continuous linear functionals on $\mathscr{D}^{k}(X)$:
\[
\mathscr{D}'_{k}(X):=\mathscr{D}^{k}(X)'.
\]
The pairing between a current $\mathscr{T}$ and a differential form $\varphi$ will be denoted $\langle \mathscr{T},\varphi \rangle$. A $k$-dimensional current $\mathscr{T}$ is a \emph{weak limit} of a sequence of $k$-dimensional currents $\mathscr{T}_i$  if 
\[
\lim_{i \to \infty} \langle \mathscr{T}_i, \varphi \rangle = \langle \mathscr{T}, \varphi \rangle \ \ \text{for all $\varphi \in \mathscr{D}^{k}(X)$}. 
\]
There are corresponding decompositions according to the bidegree and bidimension
\[
\mathscr{D}^k(X)=\bigoplus_{p+q=k} \mathscr{D}^{p,q}(X), \quad \mathscr{D}'_k(X)=\bigoplus_{p+q=k} \mathscr{D}'_{p,q}(X).
\]
Most operations on smooth differential forms extend by duality to currents. For instance, the exterior derivative of a $k$-dimensional current $\mathscr{T}$ is the $(k-1)$-dimensional current  $d\mathscr{T}$ defined by
\[
\langle d\mathscr{T},\varphi \rangle = (-1)^{k+1} \langle \mathscr{T}, d\varphi \rangle,  \quad \varphi \in \mathscr{D}^{k-1}(X).
\]
The current $\mathscr{T}$ is \emph{closed} if its exterior derivative vanishes, and $\mathscr{T}$ is \emph{real} if it is invariant under the complex conjugation. When $\mathscr{T}$ is closed, it defines a cohomology class  of $X$, denoted $\{\mathscr{T}\}$.

The space of smooth differential forms of bidegree $(p,p)$ contains the cone of positive differential forms.
By definition, a smooth differential $(p,p)$-form $\varphi$ is \emph{positive} if
\[
\text{$\varphi(x)|_S$ is a nonnegative volume form for all $p$-planes $S \subseteq T_x X$ and $x \in X$}.
\]
Dually, a current $\mathscr{T}$ of bidimension $(p,p)$ is \emph{strongly positive}  if 
\[
\langle \mathscr{T}, \varphi \rangle \ge 0 \ \ \text{for every positive differential $(p,p)$-form $\varphi$ on $X$}.
\]
Integrating along complex analytic subsets of $X$ provides an important class of strongly positive currents on $X$.
If $Z$ is a $p$-dimensional complex analytic subset of $X$, then the \emph{integration current} $[Z]$ is the $(p,p)$-dimensional current defined by integrating over the smooth locus
\[
\big\langle [Z], \varphi \big\rangle = \int_{Z_{\text{reg}}} \varphi, \quad \varphi \in \mathscr{D}^{p,p}(X).
\]

Suppose from now on that $X$ is an $n$-dimensional smooth projective algebraic variety over the complex numbers, and let $p$ and $q$ be nonnegative integers with $p+q=n$. Let us consider the following statements:
\begin{enumerate}
\item[($\textrm{HC}$)] The Hodge conjecture: The intersection
\[
H^{2q}(X,\mathbb{Q})~ \cap H^{q,q}(X) 
\]
consists of classes of $p$-dimensional algebraic cycles with rational coefficients.\\
\item[($\textrm{HC}'$)] The Hodge conjecture for currents: If $\mathscr{T}$ is a $(p,p)$-dimensional real closed current on $X$ with cohomology class
\[
\{\mathscr{T}\} \in  \mathbb{R} \otimes_\mathbb{Z} \big(H^{2q}(X,\mathbb{Z})/\textrm{tors}~ \cap H^{q,q}(X) \big),
\]
then $\mathscr{T}$ is a weak limit of the form 
\[
\mathscr{T}=\lim_{i \to \infty} \mathscr{T}_i, \quad \mathscr{T}_i=\sum_j \lambda_{ij} [Z_{ij}],
\] 
where $\lambda_{ij}$ are real numbers and $Z_{ij}$ are $p$-dimensional subvarieties of $X$.\\
\item[($\textrm{HC}^+$)] The Hodge conjecture for strongly positive currents: 
If $\mathscr{T}$ is a $(p,p)$-dimensional strongly positive closed current on $X$ with cohomology class
\[
\{\mathscr{T}\} \in  \mathbb{R} \otimes_\mathbb{Z} \big(H^{2q}(X,\mathbb{Z})/\textrm{tors}~ \cap H^{q,q}(X) \big),
\]
then $\mathscr{T}$ is a weak limit of the form 
\[
\mathscr{T}=\lim_{i \to \infty} \mathscr{T}_i, \quad \mathscr{T}_i=\sum_j \lambda_{ij} [Z_{ij}],
\]
where $\lambda_{ij}$ are positive real numbers and $Z_{ij}$ are $p$-dimensional subvarieties of $X$.
\end{enumerate}
Demailly proved in  \cite[Th\'eor\`eme 1.10]{DemaillyHodge} that, for any  smooth projective variety and $q$ as above, 
\[
\textrm{HC}^+ \implies \textrm{HC}.
\]
Furthermore, he showed that $\textrm{HC}^+$ holds for any smooth projective variety when $q=1$, see \cite[Th\'eor\`eme 1.9]{DemaillyHodge} and the proof given in \cite[Chapter 13]{DemaillyBook2}. 
In \cite[Theorem 13.40]{DemaillyBook2}, Demailly showed that, in fact, for any smooth projective variety and $q$,
\[
\textrm{HC} \iff \textrm{HC}',
\]
and asked whether $\textrm{HC}'$ implies $\textrm{HC}^+$  \cite[Remark 13.43]{DemaillyBook2}.
In Theorem \ref{Main4}, we show that $\textrm{HC}^+$ fails even on toric varieties, where the Hodge conjecture readily holds: 

\begin{theorem}\label{Main4-intro}
There is a $4$-dimensional smooth projective toric variety $X$ and a $(2,2)$-dimensional strongly positive closed current $\mathscr{T}$ on $X$ with the following properties:
\begin{enumerate}[(1)]
\item The cohomology class of $\mathscr{T}$ satisfies
\[
\{\mathscr{T}\} \in H^4(X,\mathbb{Z})\slash\textrm{tors}~ \cap H^{2,2}(X).
\]
\item The current $\mathscr{T}$ is not a weak limit of the form
\[
\lim_{i \to \infty} \mathscr{T}_i, \quad \mathscr{T}_i=\sum_j \lambda_{ij} [Z_{ij}],
\]
where $\lambda_{ij}$ are nonnegative real numbers and $Z_{ij}$ are algebraic surfaces in $X$.
\end{enumerate}
\end{theorem}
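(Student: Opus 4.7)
The plan is to realize $\mathscr{T}$ as a \emph{tropical current}. To a balanced, rational, weighted polyhedral $p$-cycle $C\subset \mathbb{R}^n$ Babaee's construction attaches a strongly positive closed $(p,p)$-current $\mathscr{T}_C$ on the dense torus $(\mathbb{C}^*)^n$, obtained by averaging the integration current of a complex thickening of $C$ over the compact real torus. After choosing a smooth projective toric fan $\Sigma$ in $\mathbb{R}^4$ whose support contains $|C|$, the current $\mathscr{T}_C$ will extend by zero across the toric boundary to a strongly positive closed $(2,2)$-current on $X = X_\Sigma$; this will be our candidate $\mathscr{T}$. I would take $C$ to be a union of $2$-dimensional rational linear subspaces of $\mathbb{R}^4$ with integer multiplicities so that $C$ coincides with its recession fan and the combinatorics remain rigid and tractable.

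For item (1), the cohomology class $\{\mathscr{T}_C\}$ can be read off from the weights of $C$ via the Fulton--Sturmfels description $A^q(X_\Sigma)\cong \textrm{MW}^q(\Sigma)$ of Chow cohomology by Minkowski weights; the balancing condition on $C$ is exactly the cocycle condition defining a class in $A^2(X_\Sigma)$. Integer weights together with the smoothness and projectivity of $X_\Sigma$ (so that $H^{2,2}(X)\cap H^4(X,\mathbb{Q})$ is spanned by algebraic classes and $A^2(X)\to H^4(X,\mathbb{Z})/\textrm{tors}$ is surjective) place $\{\mathscr{T}_C\}$ in $H^4(X,\mathbb{Z})/\textrm{tors}\cap H^{2,2}(X)$.

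For item (2), I would argue by contradiction. If $\mathscr{T}=\lim_i \sum_j \lambda_{ij}[Z_{ij}]$ with $\lambda_{ij}\ge 0$, then pairing with any smooth closed $(2,2)$-form $\eta$ shows $\{\mathscr{T}_i\}\to \{\mathscr{T}_C\}$ in $H^{2,2}(X,\mathbb{R})$; each $\{\mathscr{T}_i\}$ lies in the cone spanned by classes of effective algebraic surfaces, so $\{\mathscr{T}_C\}$ must lie in its closure $\overline{\textrm{Eff}_2(X)}$. The task therefore reduces to producing a nef class $\alpha\in \textrm{Nef}^2(X)$, nonnegative on every effective $2$-cycle, with $\alpha\cdot\{\mathscr{T}_C\}<0$. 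On a smooth projective toric variety both $\textrm{Eff}_p(X)$ and $\textrm{Nef}^p(X)$ admit explicit descriptions via Minkowski weights, so the whole problem becomes a finite combinatorial search: engineer $(\Sigma,C,\alpha)$ so that $\alpha$ pairs nonnegatively with every torus-invariant effective $2$-cycle yet strictly negatively with $C$.

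The main obstacle is the simultaneous fulfilment of these tightly coupled constraints: $C$ must be balanced with integer weights (to yield a strongly positive closed tropical current with integral Hodge class), $\Sigma$ must be smooth, projective, and refine the fan structure of $C$, and $\{\mathscr{T}_C\}$ must sit strictly outside $\overline{\textrm{Eff}_2(X_\Sigma)}$. This last condition is the most delicate one. It forces us into codimension at least two (Demailly's result handles $q=1$), and ruling out all positive weak approximations demands a combinatorial witness $\alpha$ rather than a soft positivity estimate; constructing such a tropical $C$ and a nef Minkowski weight $\alpha$ in tandem is the heart of the proof. Once such a triple is exhibited, continuity of the pairing against a smooth closed representative $\eta_\alpha$ of $\alpha$ transports the negativity to any hypothetical limit and closes the argument.
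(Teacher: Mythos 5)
Your setup for the construction of $\mathscr{T}$ as a tropical current and your treatment of item (1) via the Fulton--Sturmfels isomorphism are essentially what the paper does: one checks that the cohomology class of the trivial extension $\overline{\mathscr{T}}_\mathscr{C}$ is the recession of $\mathscr{C}$, which for a fan is $\mathscr{C}$ itself, giving an integral class in $H^{2,2}$ (Theorem \ref{Main3-Intro}). So far so good.

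Your strategy for item (2), however, cannot work. You propose to derive a contradiction by exhibiting a nef class $\alpha\in\textrm{Nef}^2(X)$ with $\alpha\cdot\{\mathscr{T}\}<0$, which would show $\{\mathscr{T}\}\notin\overline{\textrm{Eff}_2(X)}$. But in the paper's example the class $\{\mathscr{T}\}$ is itself nef (indeed it generates an extremal ray of the nef cone), and for a smooth complete toric variety every nef class is effective by Li's theorem \cite[Theorem 1.1]{Li}. Hence $\{\mathscr{T}\}=\sum_j\lambda_j\{[Z_j]\}$ with $\lambda_j\ge 0$ and $Z_j$ irreducible, so $\{\mathscr{T}\}$ lies squarely inside the effective cone and pairs nonnegatively with every nef class. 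More conceptually, any purely cohomological obstruction would disprove much more than $\textrm{HC}^+$: the whole point of the counterexample is that the obstruction lives at the level of currents, not of cohomology classes. The Hodge conjecture $\textrm{HC}$ (and $\textrm{HC}'$) hold on smooth toric varieties, so the class is necessarily representable; what fails is the approximability of the \emph{specific} current $\mathscr{T}$ by positive combinations of integration currents.

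What the paper does instead is a current-level argument. First, by Theorem \ref{Main2-Intro}, the current $\mathscr{T}=\overline{\mathscr{T}}_\mathscr{C}$ is \emph{strongly extremal}. If it could be approximated by nonnegative combinations of integration currents, then by Milman's converse to the Krein--Milman theorem (Proposition \ref{Milman'sConverse-Intro}) it would already be a weak limit of positive multiples of integration currents along \emph{irreducible} surfaces $Z_i$. The paper then associates to the cohomology class (a balanced weighted fan) a real symmetric \emph{tropical Laplacian} $L_G$, and shows via the Hodge index theorem applied to resolutions of the $Z_i$, together with continuity of the class assignment, that any class arising as such a limit has $L_G$ with at most one negative eigenvalue (Proposition \ref{OneNegativeEigenvalue}). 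The example $\mathscr{C}$ is then engineered by modifying a geometric realization of $K_{4,4}\subset\mathbb{R}^4$ so that the resulting tropical Laplacian has three negative eigenvalues, which is the contradiction. This spectral invariant on the cohomology class, coming from the Hodge index theorem rather than from cone positivity, is precisely the piece missing from your argument; without it, there is no way to rule out approximation by integration currents once the class is known to be effective.
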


The above current $\mathscr{T}$  generates an extremal ray of the cone of strongly positive closed currents on $X$: If $\mathscr{T}=\mathscr{T}_1+\mathscr{T}_2$ is any decomposition of $\mathscr{T}$ into strongly positive closed currents, then both $\mathscr{T}_1$ and $\mathscr{T}_2$ are nonnegative multiples of $\mathscr{T}$.
This extremality relates to $\textrm{HC}^+$ by the following application of Milman's converse to the Krein-Milman theorem, see Proposition \ref{Milman'sConverse} and
 \cite[Proof of Proposition 5.2]{DemaillyHodge}.

\begin{proposition}\label{Milman'sConverse-Intro}
Let $X$ be an algebraic variety and let $\mathscr{T}$ be a $(p,p)$-dimensional current on $X$ of the form
\[
\mathscr{T}=\lim_{i \to \infty} \mathscr{T}_i, \quad \mathscr{T}_i=\sum_j \lambda_{ij} [Z_{ij}],
\]
where $\lambda_{ij}$ are nonnegative real numbers and $Z_{ij}$ are $p$-dimensional irreducible subvarieties of $X$.
If $\mathscr{T}$ generates an extremal ray of the cone of strongly positive closed currents on $X$, then
 there are nonnegative real numbers $\lambda_i$ and $p$-dimensional irreducible subvarieties $Z_i \subseteq X$  such that
\[
\mathscr{T}=\lim_{i \to \infty} \lambda_i [Z_i].
\]
\end{proposition}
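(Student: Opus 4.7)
The plan is to realize $\mathscr{T}$ as an extreme point of a well-chosen compact convex ``base slice'' of the cone of strongly positive closed currents, and then apply the converse of the Krein--Milman theorem, in the same spirit as \cite[Proof of Proposition 5.2]{DemaillyHodge}.

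\textbf{Step 1 (compact convex slice).} I fix a K\"ahler form $\omega$ on $X$ and set $c := \langle \mathscr{T},\omega^p\rangle$. I may assume $c>0$, since otherwise $\mathscr{T}$ is a strongly positive current of zero mass and hence zero. Let
\[
B := \bigl\{\mathscr{S} : \mathscr{S} \text{ is a strongly positive closed $(p,p)$-current on $X$ with } \langle \mathscr{S},\omega^p\rangle = c\bigr\}.
\]
The set $B$ is convex, and by the standard weak compactness of positive currents of bounded mass on a compact manifold it is compact in the weak topology on $\mathscr{D}'_{p,p}(X)$. The hypothesis that $\mathscr{T}$ spans an extremal ray of the cone of strongly positive closed currents translates directly into $\mathscr{T}$ being an \emph{extreme point} of $B$: any convex decomposition of $\mathscr{T}$ inside $B$ is a cone decomposition whose summands must be proportional to $\mathscr{T}$ by extremality, and then equality of $\omega^p$-masses forces equality.

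\textbf{Step 2 ($\mathscr{T}$ lies in $\overline{\mathrm{conv}}(A)$).} Let $A \subseteq B$ be the subset of currents of the form $\lambda [Z]$ with $\lambda \ge 0$ and $Z \subseteq X$ an irreducible $p$-dimensional subvariety. The weak convergence $\mathscr{T}_i\to\mathscr{T}$ yields $c_i := \langle \mathscr{T}_i,\omega^p\rangle \to c>0$, so for $i$ large the rescaled currents $\widetilde{\mathscr{T}}_i := (c/c_i)\,\mathscr{T}_i$ converge to $\mathscr{T}$ and lie in $B$. Splitting each $Z_{ij}$ into irreducible components and rescaling each $[Z_{ij}]$ to $\omega^p$-mass $c$ (absorbing the scalars into the coefficients), I can rewrite each $\widetilde{\mathscr{T}}_i$ as an honest convex combination of elements of $A$; the weights sum to one precisely because $\langle \widetilde{\mathscr{T}}_i,\omega^p\rangle = c$. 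Consequently $\mathscr{T} \in \overline{\mathrm{conv}}(A)$.

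\textbf{Step 3 (Milman's converse).} The closed convex hull $\overline{\mathrm{conv}}(A)$ is a closed convex subset of the compact set $B$, hence itself compact. Since $\mathscr{T}$ is extreme in $B$, it is a fortiori extreme in the smaller convex set $\overline{\mathrm{conv}}(A)$. Proposition \ref{Milman'sConverse} (Milman's converse to Krein--Milman) then gives $\mathscr{T} \in \overline{A}$, which is exactly the desired representation $\mathscr{T} = \lim_i \lambda_i [Z_i]$ with $\lambda_i \ge 0$ and $Z_i$ irreducible $p$-dimensional subvarieties of $X$.

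The only delicate point is checking that the Krein--Milman machinery is available in this non-metrizable locally convex setting; the weak compactness of $B$ coming from uniform mass bounds for positive currents on a compact K\"ahler manifold supplies the required topological input, while the dictionary between ray-extremality in the cone and point-extremality in the slice is the short bookkeeping of Step~1. Everything else is routine rescaling.
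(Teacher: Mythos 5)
Your proposal is correct and follows essentially the same route as the paper: form a compact convex mass-normalized slice via Banach--Alaoglu, observe that $\mathscr{T}$ (suitably normalized) lies in the closed convex hull of the normalized integration currents and is extreme there by the extremal-ray hypothesis, then invoke Milman's converse to Krein--Milman. The only cosmetic differences are that you normalize to mass $c=\langle\mathscr{T},\omega^p\rangle$ instead of mass $1$ and spell out the intermediate fact that $\mathscr{T}$ is extreme in the slice $B$, whereas the paper passes directly from ray-extremality to extremality in $\overline{\mathrm{co}}(\mathscr{K})$.
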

Therefore, if we assume that $\textrm{HC}^+$ holds for a smooth projective variety $X$, then every extremal strongly positive closed current with rational cohomology class can be approximated by positive multiples of integration currents along \emph{irreducible} subvarieties of $X$. 
Lelong in \cite{Lelong} proved that the integration currents along  irreducible analytic subsets are extremal, and asked whether those are the only extremal currents. Demailly in \cite{DemaillyHodge}  found the first extremal strongly positive closed current on $\mathbb{C}\mathbb{P}^2$ with a support of real dimension 3, which, therefore, cannot be an integration current along any analytic set. Later on, Bedford noticed that many extremal currents occur in dynamical systems on several complex variables have fractal sets as their support, and extremal currents of this type were later generated in several works such as~\cite{Bedford-Smillie, Forn-Sib, Bedford-Lyubich-Smillie, sibony, Cantat, Diller-Favre, Guedj02,  Guedj-Sibony} in codimension $1$, and \cite{Dinh-Sibony1, Guedj, Dinh-Sibony3, sib-dinh-rigidity} in higher codimension. These extremal currents, though, were readily known to be a weak limit of integration currents by the methods of their construction. The first tropical approach to extremal currents was established in the PhD thesis of the first author \cite{Babaee}. He introduced the notion of tropical currents and deduced certain sufficient local conditions which implied extremality in any dimension and codimension.

In Section \ref{SectionConstruction}, we provide a detailed construction of tropical currents. A \emph{tropical current} is a certain closed current of bidimension $(p,p)$ on the algebraic torus $(\mathbb{C}^*)^n$, which is associated to a tropical variety of dimension $p$ in $\mathbb{R}^n$. A tropical variety is a weighted rational polyhedral complex $\mathscr C$ which is \emph{balanced}, see Definition \ref{BalancingCondition}. The tropical current associated to $\mathscr C$, denoted by $\mathscr{T}_\mathscr{C}$, has support 
\[
|\mathscr{T}_\mathscr{C}|= \textrm{Log}^{-1}(\mathscr C), 
\]
where $\textrm{Log}$ is the map defined by
\[
\textrm{Log}: (\mathbb{C}^*)^n \longrightarrow \mathbb{R}^n, \quad \big(z_1,\ldots,z_n) \longmapsto (-{\log|z_1|},\ldots, -{\log|z_n|}\big).
\]
To construct $\mathscr{T}_\mathscr{C}$ from a weighted complex $\mathscr{C}$, for each $p$-dimensional cell $\sigma$ in $\mathscr{C}$ we consider a current $\mathscr{T}_\sigma$, the average of the integration currents along fibers of a natural fiberation over the real torus $\textrm{Log}^{-1}(\sigma) \longrightarrow (S^1)^{n-p}$. 
The current $\mathscr{T}_{\mathscr C}$ is then defined by setting
\[
\mathscr{T}_{\mathscr C} = \sum_{\sigma} \text{w}_{\mathscr{C}}(\sigma) \mathscr{T}_\sigma, 
\]
where the sum is over all $p$-dimensional cells in $\mathscr C$ and $\text{w}_{\mathscr{C}}(\sigma)$ is the corresponding weight. 
In Theorem \ref{Main1}, we give the following criterion for the closedness of the resulting current $\mathscr{T}_\mathscr{C}$, cf.  \cite[Theorem 3.1.8]{Babaee}.

\begin{theorem}\label{Main1-Intro}
A weighted complex $\mathscr C$ is balanced if and only if the current $\mathscr{T}_\mathscr{C}$ is closed.
\end{theorem}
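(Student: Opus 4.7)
The plan is to compute $d\mathscr{T}_{\mathscr{C}}$ cell-by-cell and observe that the contribution concentrated above each codimension-one face $\tau$ records precisely the weighted sum of primitive normal directions appearing in the balancing condition. Since both closedness and the balancing condition are local at codimension-one faces of $\mathscr{C}$, I would first fix a $(p-1)$-cell $\tau$ and work in a neighbourhood small enough to meet only $\tau$ and the $p$-cells $\sigma$ containing $\tau$. After an $\textrm{SL}_n(\mathbb{Z})$-change of coordinates one may assume $\textrm{span}(\tau)$ is a coordinate subspace, so that $(\mathbb{C}^*)^n$ splits as $(\mathbb{C}^*)^{p-1}\times(\mathbb{C}^*)^{n-p+1}$ adapted to $\tau$; in these coordinates $\mathscr{T}_\tau$ factors as integration along a $(p-1)$-dimensional complex subtorus in the first factor times an averaged real torus in the second.

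For a fixed $\sigma \supset \tau$, the current $\mathscr{T}_\sigma$ is, by construction, the average over $(S^1)^{n-p}$ of integration currents $[F_\theta]$ along fibres of the natural fibration $\textrm{Log}^{-1}(\sigma)\to (S^1)^{n-p}$, each $F_\theta$ being a translate of a $p$-dimensional complex subtorus. For a test form $\varphi\in\mathscr{D}^{2p-1}(X)$ supported in the chosen neighbourhood, the definition of $d\mathscr{T}_\sigma$ combined with Fubini and Stokes applied fibrewise yields
\[
\langle d\mathscr{T}_\sigma,\varphi\rangle \;=\; \pm\int_{(S^1)^{n-p}}\!\int_{\partial F_\theta}\varphi\,d\theta,
\]
where $\partial F_\theta$ lies above $\partial\sigma$, and its piece above $\tau$ is oriented by the primitive lattice vector $v_{\sigma/\tau}$ pointing from $\tau$ into $\sigma$. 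A direct calculation in the adapted coordinates identifies this contribution as a current $\Phi_\tau(v_{\sigma/\tau})$ depending $\mathbb{R}$-linearly on $v_{\sigma/\tau}$ modulo $\textrm{span}(\tau)$, with $\Phi_\tau$ injective on $\mathbb{R}^n/\textrm{span}(\tau)$. Summing with the weights $\textrm{w}_{\mathscr{C}}$ gives
\[
d\mathscr{T}_{\mathscr{C}}\,\big|_{\textrm{Log}^{-1}(\tau)} \;=\; \Phi_\tau\!\left(\,\sum_{\sigma\supset\tau}\textrm{w}_{\mathscr{C}}(\sigma)\,v_{\sigma/\tau}\,\right),
\]
so the vanishing of the left-hand side for every $\tau$ is equivalent to the interior argument lying in $\textrm{span}(\tau)$ for every $\tau$, i.e.\ the balancing condition.

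The main obstacle is the identification of the boundary piece above $\tau$ with $\Phi_\tau(v_{\sigma/\tau})$, together with the injectivity of $\Phi_\tau$. One has to track carefully how $v_{\sigma/\tau}$ enters both the outward orientation of $\partial F_\theta$ above $\tau$ and the change of fibration in passing from the $\sigma$-torus average to the $\tau$-torus average, and to verify that transverse directions produce linearly independent contributions; otherwise a nontrivial relation among the currents $\Phi_\tau(v)$ could allow $d\mathscr{T}_{\mathscr{C}}$ to vanish without balancing holding, breaking the "only if" direction. With these two ingredients in place, both implications of the equivalence follow at once.
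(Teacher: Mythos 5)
Your plan — reduce to a neighbourhood of a codimension-one cell $\tau$, apply Stokes fibrewise to each $\mathscr{T}_\sigma$ with $\sigma\supset\tau$, identify the piece of $d\mathscr{T}_\sigma$ lying over $\tau$ with an injective linear map applied to the primitive normal vector, and sum with the weights — is essentially the paper's argument. The paper's $\mathscr{A}_\tau$ is your $\Phi_\tau$: Proposition~\ref{BoundaryPolyhedron} is the fibrewise Stokes step, Lemma~\ref{OrientationLemma} handles the orientation bookkeeping, Proposition~\ref{BoundaryComputation} re-fibers the boundary over $S_{N(\tau)}$, and Theorem~\ref{Main1General} is exactly your displayed formula.

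One point where your write-up understates the difficulty: you assert that ``a direct calculation in the adapted coordinates'' shows $\Phi_\tau$ is $\mathbb{R}$-linear in $v_{\sigma/\tau}$. This linearity is not automatic; the paper explicitly notes that $\lambda^{u_1+u_2}\neq\lambda^{u_1}+\lambda^{u_2}$ as $1$-currents on $S_{N(\tau)}$ before averaging, and it is only after taking the Haar average over the compact torus that linearity (and injectivity) hold. The paper establishes both via elementary Fourier analysis on $S_{N(\tau)}$: the normalized Haar measure kills every nonzero Fourier mode, so the averaged current $\mathscr{A}(\lambda^u)$ is determined by its pairings with \emph{invariant} $1$-forms, which are literally the dual pairings $\langle u,m\rangle$ and hence linear in $u$; injectivity follows by the same pairing. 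Your ``direct calculation'' would have to reproduce this — in particular you need a density argument (Stone--Weierstrass in the paper) saying invariant currents are determined by the pairings with $\chi^m w$ for $w$ invariant, and the observation that for the Haar average only $m=0$ contributes. So the route is the same; the Fourier-analytic packaging in the paper is precisely what turns your ``track carefully'' into a clean proof of both linearity and injectivity of $\Phi_\tau$, and you should flag the linearity issue just as prominently as the injectivity one, since both are needed (linearity for the ``balanced $\Rightarrow$ closed'' direction, injectivity for the converse).
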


In Section \ref{SectionFourier}, we prove the above criterion for closedness of $\mathscr{T}_\mathscr{C}$, as well as the following criterion for strong extremality of $\mathscr{T}_\mathscr{C}$.
A closed current $\mathscr{T}$ with measure coefficients is said to be \emph{strongly extremal} if any closed current $\mathscr{T}'$ with measure coefficients which has the same dimension and  support as $\mathscr{T}$ is a constant multiple of $\mathscr{T}$.
(Note that if $\mathscr{T}$ is strongly positive and strongly extremal, then  $\mathscr{T}$ generates an extremal ray in the cone of strongly positive closed currents.)
Similarly, a balanced weighted complex $\mathscr{C}$ is said to be \emph{strongly extremal} if any balanced weighted complex $\mathscr{C}'$ which has the same dimension and  support as $\mathscr{C}$ is a constant multiple of $\mathscr{C}$.
In Theorem \ref{Main2}, we prove the following  improvement of extremality results in \cite{Babaee}.

\begin{theorem}\label{Main2-Intro}
A non-degenerate tropical variety $\mathscr{C}$ is strongly extremal if and only if the tropical current $\mathscr{T}_\mathscr{C}$ is  strongly extremal.
\end{theorem}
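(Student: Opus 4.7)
My plan is to exploit the $(S^1)^n$-equivariant structure of currents supported on $\textrm{Log}^{-1}(|\mathscr{C}|)$ via the Fourier decomposition developed in Section \ref{SectionFourier}, and in both directions to reduce extremality statements about currents to extremality statements about the underlying weighted complex.

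For the implication \emph{strong extremality of $\mathscr{T}_\mathscr{C}$ implies strong extremality of $\mathscr{C}$} I would begin with any balanced weighted complex $\mathscr{C}'$ having the same dimension and support as $\mathscr{C}$. Theorem \ref{Main1-Intro} makes $\mathscr{T}_{\mathscr{C}'}$ a closed current with measure coefficients, and its support equals $\textrm{Log}^{-1}(|\mathscr{C}'|) = |\mathscr{T}_\mathscr{C}|$. Strong extremality of $\mathscr{T}_\mathscr{C}$ then forces $\mathscr{T}_{\mathscr{C}'} = c\,\mathscr{T}_\mathscr{C}$ for some scalar $c$. Because the assignment $\mathscr{C} \mapsto \mathscr{T}_\mathscr{C}$ is linear in the weights and the cellular pieces $\mathscr{T}_\sigma$ are linearly independent, I can read off $\mathscr{C}' = c\,\mathscr{C}$ as weighted complexes.

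For the converse I would take a closed current $\mathscr{T}'$ with measure coefficients, bidimension $(p,p)$, and support $\textrm{Log}^{-1}(|\mathscr{C}|)$, and Fourier-decompose it under the $(S^1)^n$ action
\[
\mathscr{T}' = \sum_{\alpha \in \mathbb{Z}^n} \mathscr{T}'_\alpha.
\]
Since the exterior derivative commutes with the torus action, each $\mathscr{T}'_\alpha$ is itself a closed measure-coefficient current supported on $\textrm{Log}^{-1}(|\mathscr{C}|)$. The zero mode $\mathscr{T}'_0$ is $(S^1)^n$-invariant, and on each open $p$-cell $\sigma$ the space of such invariant bidimension-$(p,p)$ currents on $\textrm{Log}^{-1}(\sigma)$ is one-dimensional and spanned by $\mathscr{T}_\sigma$. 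Writing $\mathscr{T}'_0 = \sum_\sigma w(\sigma)\,\mathscr{T}_\sigma$, closedness of $\mathscr{T}'_0$ combined with the converse direction of Theorem \ref{Main1-Intro} forces the weights $w$ to satisfy the balancing condition, producing a balanced weighted complex $\mathscr{C}'$ with $|\mathscr{C}'| = |\mathscr{C}|$ and $\mathscr{T}'_0 = \mathscr{T}_{\mathscr{C}'}$. The strong extremality of $\mathscr{C}$ then yields $\mathscr{C}' = c\,\mathscr{C}$ and hence $\mathscr{T}'_0 = c\,\mathscr{T}_\mathscr{C}$.

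The remaining and main step is to show that every Fourier mode $\mathscr{T}'_\alpha$ with $\alpha \ne 0$ vanishes. I expect this to be the technical heart of the argument: writing out $d\mathscr{T}'_\alpha = 0$ in Fourier components should couple the restrictions of $\mathscr{T}'_\alpha$ on adjacent cells via linear relations involving the pairings of $\alpha$ with the primitive generators of the codimension-one incidences. The non-degeneracy hypothesis on $\mathscr{C}$ is presumably the condition under which, for every $\alpha \ne 0$, these relations admit only the zero solution, forcing $\mathscr{T}'_\alpha \equiv 0$. Once this vanishing is established, $\mathscr{T}' = \mathscr{T}'_0 = c\,\mathscr{T}_\mathscr{C}$ and the theorem follows.
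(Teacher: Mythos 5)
Your overall route matches the paper's proof of Theorem \ref{Main2}: one direction by linearity and injectivity of $\mathscr{C}\mapsto\mathscr{T}_\mathscr{C}$, the other by Fourier analysis that separates the invariant mode (handled by strong extremality of $\mathscr{C}$) from the nonzero modes (which must be shown to vanish). The bookkeeping differs slightly---the paper first applies the support theorem (Lemma \ref{SupportLemma}) to write $\mathscr{T}' = \sum_\sigma \mathscr{T}_\sigma(\mu_\sigma)$ for Borel measures $\mu_\sigma$ on $S_{N(\sigma)}$ and then Fourier-expands each $\mu_\sigma$, which is equivalent to your global $(S^1)^n$-decomposition since each $\mathscr{T}_\sigma(\mu_\sigma)$ is invariant under the subtorus $T_{H_\sigma\cap\mathbb{Z}^n}$---but these are the same argument in different packaging.

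There is, however, a genuine gap exactly where you flag uncertainty. You conjecture that the nonzero Fourier modes vanish because non-degeneracy is ``presumably the condition'' under which the closedness relations admit only the zero solution. That attribution is incorrect: non-degeneracy alone does not force vanishing. The paper first shows (Theorem \ref{ClosedBalanced}) that $d\mathscr{T}'=0$ is equivalent to the balancing of each weighted complex $\mathscr{C}_{\mathscr{T}'}(m)$, $m\in(\mathbb{Z}^n)^\vee$, whose weight on a cell $\sigma$ is the Fourier coefficient $\hat\mu_\sigma(m)$ when $m\in M(\sigma)$ and $0$ otherwise. Non-degeneracy enters through Lemma \ref{NonDegenerate}: it says $\bigcap_\sigma M(\sigma)_\mathbb{R}=\{0\}$, so for every nonzero $m$ some cell $\sigma$ has $m\notin M(\sigma)$, and thus $|\mathscr{C}_{\mathscr{T}'}(m)|$ is \emph{properly} contained in $|\mathscr{C}|$. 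But a balanced weighted complex with strictly smaller support is not automatically zero; what forces $\mathscr{C}_{\mathscr{T}'}(m)\sim 0$ is Lemma \ref{ProperSupport}, which invokes the strong extremality of $\mathscr{C}$ a second time. In short, the vanishing of the nonzero modes is a joint consequence of non-degeneracy (to shrink the support) and strong extremality (to kill anything of strictly smaller support). Your sketch assigns the work to non-degeneracy alone and leaves this key step unproved, so the argument as written does not close.
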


Here a tropical variety in $\mathbb{R}^n$ is said to be \emph{non-degenerate} if its support is contained in no proper subspace of $\mathbb{R}^n$.
We note that there is an abundance of strongly extremal tropical varieties. For example, the Bergman fan of any simple matroid is a strongly extremal tropical variety \cite[Theorem 38]{Huh}.
There are $376467$ nonisomorphic simple matroids on $9$ elements \cite{Mayhew-Royle}, producing that many strongly extremal strongly positive closed currents on $(\mathbb{C}^*)^8$. By Theorem \ref{Main3-Intro} below, all of them have distinct cohomology classes in one fixed toric compactification of $(\mathbb{C}^*)^8$, the one associated to the permutohedron \cite{Huh}. In fact, Demailly's first example of a non-analytic extremal strongly positive current in \cite{DemaillyHodge} is the tropical current associated to the simplest nontrivial matroid, namely the rank $2$ simple matroid on $3$ elements.

In Section \ref{SectionToric}, we consider the trivial extension $\overline{\mathscr{T}}_\mathscr{C}$ of the tropical current $\mathscr{T}_\mathscr{C}$ to an $n$-dimensional smooth projective toric variety $X$ whose fan is \emph{compatible} with $\mathscr{C}$, see Definition \ref{DefinitionCompatible}.
According to Fulton and Sturmfels \cite{Fulton-Sturmfels}, cohomology classes of a complete toric variety bijectively correspond to balanced weighted fans compatible with the fan of the toric variety. In Theorem \ref{Main3}, we give a complete description of the cohomology class of $\overline{\mathscr{T}}_\mathscr{C}$ in $X$:

\begin{theorem}\label{Main3-Intro}
If $\mathscr{C}$ is a $p$-dimensional tropical variety compatible with the fan of $X$, then
\[
\{\overline{\mathscr{T}}_\mathscr{C}\}=\text{rec}(\mathscr{C}) \in H^{q,q}(X),
\]
where $\textrm{rec}(\mathscr C)$ is the \emph{recession} of $\mathscr C$ (recalled  in Section \ref{SubsectionRecession}).
In particular, if all polyhedrons in $\mathscr{C}$ are cones in $\Sigma$, then
\[
\{\overline{\mathscr{T}}_\mathscr{C}\}=\mathscr{C} \in H^{q,q}(X).
\]
\end{theorem}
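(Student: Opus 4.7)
\emph{Proof sketch.}
The plan is to apply the Fulton-Sturmfels theorem \cite{Fulton-Sturmfels}, which identifies $H^{2q}(X,\mathbb{Z})$ with the group of $p$-dimensional balanced Minkowski weights on the fan $\Sigma$ of $X$: a class $c \in H^{2q}(X,\mathbb{Z})$ corresponds to the function $\tau \mapsto \deg\big(c \cdot [V(\tau)]\big)$, where $\tau$ ranges over the $p$-dimensional cones of $\Sigma$ and $V(\tau)$ is the corresponding $q$-dimensional torus-orbit closure. Since $\text{rec}(\mathscr{C})$ is itself a $p$-dimensional balanced weighted fan supported on cones of $\Sigma$ by compatibility of $\mathscr{C}$ with $\Sigma$, it suffices to prove
\[
\deg\big(\{\overline{\mathscr{T}}_\mathscr{C}\} \cdot [V(\tau)]\big) \;=\; \text{w}_{\text{rec}(\mathscr{C})}(\tau)
\]
for every $p$-dimensional cone $\tau \in \Sigma$.

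To compute the left-hand side I would exploit the compact torus symmetry. Each current $\mathscr{T}_\sigma$ is by construction averaged over the fibers of $\text{Log}^{-1}(\sigma) \to (S^1)^{n-p}$, so $\overline{\mathscr{T}}_\mathscr{C}$ is invariant under the action of the compact torus $(S^1)^n$ on $X$. Consequently I may represent $[V(\tau)]$ by an $(S^1)^n$-invariant smooth closed $(p,p)$-form $\eta_\tau$, for example as a product of curvature forms of $(S^1)^n$-invariant Hermitian metrics on the toric line bundles $\mathcal{O}(D_\rho)$ associated to the rays $\rho$ of $\tau$. The intersection number then equals $\int_X \overline{\mathscr{T}}_\mathscr{C} \wedge \eta_\tau$, and the joint torus invariance lets me push this integral down through $\text{Log}$ to a pairing on $\mathbb{R}^n$ between the weighted complex $\mathscr{C}$ and a piecewise-linear tropical divisor determined by the support functions of the $D_\rho$ with $\rho$ a ray of $\tau$.

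The final step identifies this tropical pairing with $\text{w}_{\text{rec}(\mathscr{C})}(\tau)$. The tropical divisors associated to the rays of $\tau$ cut out, in the sense of stable intersection, the linear subspace dual to $\tau$ in $\mathbb{R}^n$; transversality with this subspace sees only the asymptotic directions of $\mathscr{C}$, i.e.\ $\text{rec}(\mathscr{C})$, and the signed count of intersections with multiplicities is by definition $\text{w}_{\text{rec}(\mathscr{C})}(\tau)$. The ``in particular'' case is then immediate, since a fan equals its own recession.

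The main obstacle is justifying the push-down of the pairing through $\text{Log}$ and controlling contributions coming from the boundary divisors $X \setminus (\mathbb{C}^*)^n$. Because $\overline{\mathscr{T}}_\mathscr{C}$ is not an integration current along an analytic subset of $X$, standard complex-analytic intersection theory is unavailable; instead I would use the explicit Fourier/averaging description of $\mathscr{T}_\sigma$ developed in Section \ref{SectionFourier} to verify that the pairing with $\eta_\tau$ is concentrated where $\mathscr{C}$ exhibits its recession behavior, so that the only surviving contributions are those recorded by $\text{rec}(\mathscr{C})$.
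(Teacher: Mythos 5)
Your proposal correctly identifies the Fulton--Sturmfels reduction: it suffices to compute $\deg\big(\{\overline{\mathscr{T}}_\mathscr{C}\}\cdot [V(\gamma)]\big)$ for each $p$-dimensional cone $\gamma$ of $\Sigma$, and the idea of pairing $\overline{\mathscr{T}}_\mathscr{C}$ with a product of $(S^1)^n$-invariant curvature forms of the line bundles $\mathcal{O}(D_\rho)$, $\rho$ a ray of $\gamma$, is in fact close to what the paper does. The paper's proof pairs $\overline{\mathscr{T}}_\mathscr{C}$ with the wedge of Chern forms $w_1,\ldots,w_p$ of the line bundles of $D_1,\ldots,D_p$, so this part of your plan is sound.

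The gap is precisely the step you flag as the ``main obstacle,'' and your sketch does not resolve it. You cannot ``push the integral down through $\text{Log}$'' in the way you describe: the form $\eta_\gamma$ (or the $w_i$) lives on the compactification $X$ and does not descend along $\text{Log}: (\mathbb{C}^*)^n \to \mathbb{R}^n$, while the pairing $\int_X \overline{\mathscr{T}}_\mathscr{C}\wedge \eta_\gamma$ is sensitive to the behavior of the current near the toric boundary $X\setminus (\mathbb{C}^*)^n$; the assertion that this pairing becomes a tropical stable intersection with the subspace dual to $\gamma$ is exactly the thing to be proved. The paper handles this in two technical steps that are absent from your sketch. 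First, Proposition \ref{CohomologyClass} replaces the smooth wedge $w_1\wedge\cdots\wedge w_p\wedge\overline{\mathscr{T}}_\mathscr{C}$ by the current-theoretic wedge $[D_1]\wedge\cdots\wedge[D_p]\wedge\overline{\mathscr{T}}_\mathscr{C}$ at the level of cohomology classes; this is nontrivial for non-analytic positive currents and uses Demailly's regularization together with the monotone convergence theorem for wedge products against plurisubharmonic potentials. Second, the wedge $[D_1]\wedge\cdots\wedge[D_p]\wedge\overline{\mathscr{T}}_\mathscr{C}$ is computed cell by cell using Lemma \ref{Compactness}, which shows that $\overline{\text{Log}^{-1}(\sigma)}^X$ sits entirely inside the affine chart $U_{\text{rec}(\sigma)}$, and Lemma \ref{Transversal}, which shows the intersection with $D_1\cap\cdots\cap D_p$ is nonempty precisely when $\text{rec}(\sigma)=\gamma$ and then consists of a single torus-orbit worth of points lying in the \emph{interior} of $\overline{\pi_\sigma^{-1}(x)}$. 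It is these two lemmas that make rigorous the intuition that ``only the asymptotic directions of $\mathscr{C}$ survive,'' and without an argument of this kind your sketch does not close.

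So the strategy is right in outline, but the middle of the argument---converting the cohomological pairing into a current-theoretic wedge product and then localizing it on the toric boundary via the recession cones---is genuinely missing rather than merely deferred.
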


The current $\mathscr{T}$ in Theorem \ref{Main4-intro} is a current of the form $\overline{\mathscr{T}}_\mathscr{C}$, and Theorem \ref{Main3-Intro} plays an important role in justifying the claimed properties of $\mathscr{T}$.

In Section \ref{SectionLast}, we complete the proof of Theorem \ref{Main4-intro} by analyzing a certain Laplacian matrix associated to a $2$-dimensional tropical variety $\mathscr{C}$. According to Theorem \ref{Main3-Intro}, if $\mathscr{C}$ is compatible with the fan of an $n$-dimensional smooth projective toric variety $X$, we may view the cohomology class of  $\overline{\mathscr{T}}_\mathscr{C}$ as a geometric graph $G=G(\mathscr{C}) \subseteq \mathbb{R}^n \setminus \{0\}$ with edge weights $w_{ij}$ satisfying the \emph{balancing condition}: At each vertex $u_i$ there is a real number $d_i$ such that
\[
d_iu_i=\sum_{u_i \sim u_j} w_{ij} u_j,
\]
where the sum is over all neighbors of $u_i$ in $G$.  
We define the \emph{tropical Laplacian} of $\mathscr{C}$ to be the real symmetric matrix
$L_{G}$ with entries
\[
(L_{G})_{ij}:=\left\{\begin{array}{cl} d_i & \text{if $u_i=u_j$,} \\ -w_{ij} & \text{if $u_i \sim u_j$,} \\ 0 & \text{if otherwise,}\end{array}\right.
\]
where the diagonal entries $d_i$ are the real numbers satisfying
\[
d_i u_i = \sum_{u_i \sim u_j} w_{ij} u_j.
\]
When $G$ is the graph of a polytope with weights given by the Hessian of the volume of the dual polytope, the matrix $L_G$ has been considered in various contexts related to rigidity and polyhedral combinatorics \cite{Connelly,Filliman,Lovasz,Izmestiev}. In this case, $L_G$ is known to have exactly one negative eigenvalue, by the Alexandrov-Fenchel inequality. See, for example, \cite[Proposition 4]{Filliman} and \cite[Theorem A.10]{Izmestiev}.
In Proposition \ref{OneNegativeEigenvalue}, using the Hodge index theorem and the continuity of the cohomology class assignment, we show that $L_G$ has at most one negative eigenvalue if $\overline{\mathscr{T}}_{\mathscr{C}}$ is a weak limit of integration currents along irreducible surfaces in $X$.

The remainder of the paper is devoted to the construction of a strongly extremal tropical surface $\mathscr{C}$ whose tropical Laplacian has more than one negative eigenvalue.
For this we introduce two operations on weighted fans, $F \longmapsto F_{ij}^+$ (Section \ref{PlusConstruction}) and $F \longmapsto F_{ij}^-$ (Section \ref{MinusConstruction}), and repeatedly apply them to a geometric realization of the complete bipartite graph $K_{4,4} \subseteq \mathbb{R}^4$ to arrive at $\mathscr{C}$ with the desired properties. By the above Theorems \ref{Main1-Intro}, \ref{Main2-Intro}, and \ref{Main3-Intro}, the resulting tropical current $\overline{\mathscr{T}}_\mathscr{C}$ is a strongly extremal strongly positive closed current which is not a weak limit of positive linear combinations of integration currents along subvarieties.

\subsection*{Acknowledgements}
We thank the referees for their careful reading and insightful comments. 
Their suggestions significantly improved the quality of the manuscript. 
The first author is also thankful to Alain Yger, Jean-Pierre Demailly, Vincent Koziarz, Erwan Brugall\'e, Alexander Rashkovskii, Romain Dujardin, Omid Amini, and Charles Favre for the fruitful discussions, and their support.

\section{Construction of tropical currents}\label{SectionConstruction}

\subsection{}

Let $\mathbb{C}^*$ be the group of nonzero complex numbers.
The \emph{logarithm map} is the homomorphism
\[
-\text{log}: \mathbb{C}^* \longrightarrow \mathbb{R}, \qquad z \longmapsto -\log |z|,
\]
and the \emph{argument map} is the homomorphism
\[
\text{arg}: \mathbb{C}^* \longrightarrow S^1, \qquad z \longmapsto z/|z|.
\]
The argument map splits the exact sequence
\[
\xymatrix{
0\ar[r]&S^1\ar[r]&\mathbb{C}^*\ar[r]^{-\text{log}}&\mathbb{R}\ar[r]&0,
}
\]
giving polar coordinates to nonzero complex numbers.
Under the chosen sign convention, the inverse image of $\mathbb{R}_{> 0}$ under the logarithm map is the punctured unit disk
\[
\mathbb{D}^*:=\{z \in \mathbb{C}^* \mid |z| <1\}.
\]

Let $N$ be a finitely generated free abelian group. 
There are Lie group homomorphisms
\[
\xymatrix{
&T_N \ar[dr]^{\text{arg}\otimes_\mathbb{Z}1} \ar[dl]_{-\text{log}\otimes_\mathbb{Z}1}&\\
N_\mathbb{R}&&S_N,}
\]
called the \emph{logarithm map} and the \emph{argument map} for $N$ respectively, where 
\begin{eqnarray*}
T_N&:=&\text{the complex algebraic torus $\mathbb{C}^* \otimes_\mathbb{Z} N$,}\\
S_N&:=&\text{the compact real torus $S^1 \otimes_\mathbb{Z} N$,}\\
N_\mathbb{R}&:=&\text{the real vector space $\mathbb{R} \otimes_\mathbb{Z} N$.}
\end{eqnarray*}
When $N$ is the group $\mathbb{Z}^n$ of integral points in $\mathbb{R}^n$, we denote the two maps by
\[
\xymatrix{
&(\mathbb{C}^*)^n \ar[dr]^{\text{Arg}} \ar[dl]_{\text{Log}}&\\
\mathbb{R}^n&&(S^1)^n.
}
\]

\subsection{}

A linear subspace of $\mathbb{R}^n$ is \emph{rational} if it is generated by a subset of $\mathbb{Z}^n$. 
Corresponding to a $p$-dimensional rational subspace $H \subseteq \mathbb{R}^n$, there is a commutative diagram of split exact sequences
\[
 \xymatrix{
  &0 \ar[d]&0 \ar[d]&0 \ar[d]&\\
 0 \ar[r]& S_{H \cap \mathbb{Z}^n} \ar[r] \ar[d]& (S^1)^n \ar[r] \ar@/_/[d]&S_{\mathbb{Z}^n/(H \cap \mathbb{Z}^n)} \ar[r] \ar[d]& 0\\
 0 \ar[r]& T_{H \cap \mathbb{Z}^n} \ar[r] \ar[d]& (\mathbb{C}^*)^n \ar[r] \ar[d]^{\text{Log}} \ar@/_/[u]_{\text{Arg}}&T_{\mathbb{Z}^n/(H \cap \mathbb{Z}^n)} \ar[r] \ar[d]& 0\\
 0 \ar[r]& H\ar[r]\ar[d] & \mathbb{R}^n \ar[r]\ar[d] &\mathbb{R}^n/H \ar[r] \ar[d]& 0,\\
  &0&0&0&\\
}
\]
where the vertical surjections are the logarithm maps for $H \cap \mathbb{Z}^n$, $\mathbb{Z}^n$, and their quotient.
We define a Lie group homomorphism $\pi_H$ as the composition
\[
\pi_H:\xymatrix{ \text{Log}^{-1}(H) \ar[r]^{\quad\text{Arg}}& (S^1)^n \ar[r]&  S_{\mathbb{Z}^n/(H \cap \mathbb{Z}^n)}}.
\]
The map $\pi_H$ is a submersion, equivariant with respect to the action of $(S^1)^n$. Its kernel is the closed subgroup
\[
\text{ker}(\pi_H)=T_{H \cap \mathbb{Z}^n} \subseteq (\mathbb{C}^*)^n.
\]
Each fiber of $\pi_H$ is a translation of the kernel by the action of $(S^1)^n$,  
and in particular, each fiber $\pi_H^{-1}(x)$ is a $p$-dimensional closed complex submanifold of $(\mathbb{C}^*)^n$.

\begin{definition}
Let $\mu$ be a complex Borel measure on $S_{\mathbb{Z}^n/(H \cap \mathbb{Z}^n)}$.
We define a $(p,p)$-dimensional closed current $\mathscr{T}_H(\mu)$ on $(\mathbb{C}^*)^n$ by
\[
\mathscr{T}_H(\mu):=\int_{x \in S_{\mathbb{Z}^n/(H \cap \mathbb{Z}^n)}} \big[\pi_H^{-1}(x)\big] \ d\mu(x).
\]
When $\mu$ is the Haar measure on $S_{\mathbb{Z}^n/(H \cap \mathbb{Z}^n)}$ normalized by
\[
\int_{x \in S_{\mathbb{Z}^n/(H \cap \mathbb{Z}^n)}} d\mu(x)=1,
\] 
we omit $\mu$ from the notation and write
\[
\mathscr{T}_H:=\mathscr{T}_H(\mu).
\]
\end{definition}

In other words, $\mathscr{T}_H(\mu)$ is obtained from the $0$-dimensional current $d\mu$ by
\[
\mathscr{T}_H(\mu) = \iota^H_{*} \big( \pi_H^* (d\mu) \big),
\]
where  $\iota^H$ is the closed embedding and $\pi_H$ is the oriented submersion in the diagram
\[
\xymatrix{
\text{Log}^{-1}(H) \ar[r]^{\quad \iota^H} \ar[d]_{\pi_H}& (\mathbb{C}^*)^n\\
S_{\mathbb{Z}^n/(H \cap \mathbb{Z}^n)}. &
}
\] 
Each fiber of $\pi_H$ is invariant under the action of $T_{H \cap \mathbb{Z}^n}$, and hence the current $\mathscr{T}_H(\mu)$ remains invariant under the action of $T_{H\cap \mathbb{Z}^n}$:
\[
\mathscr{T}_H(\mu)=t_* \big(\mathscr{T}_H(\mu)\big) = t^* \big(\mathscr{T}_H(\mu)\big), \qquad t \in T_{H \cap \mathbb{Z}^n}.
\]
The current $\mathscr{T}_H(\mu)$ is strongly positive if and only if $\mu$ is a positive measure.

\subsection{}

Let $A$ be a $p$-dimensional affine subspace of $\mathbb{R}^n$ parallel to the linear subspace $H$.
For $a \in A$,  there is a commutative diagram of corresponding translations
\[
\xymatrix{
\text{Log}^{-1}(A) \ar[d]_{\text{Log}} \ar[r]^{e^a}& \text{Log}^{-1}(H) \ar[d]^{\text{Log}}\\
A \ar[r]^{-a}& H.
}
\]
We define a submersion $\pi_{A}$ as the composition
\[
\pi_{A}:\xymatrix{\text{Log}^{-1}(A) \ar[r]^{e^a \ } & \text{Log}^{-1}(H) \ar[r]^{\pi_{H} \ \ }&S_{\mathbb{Z}^n/(H \cap \mathbb{Z}^n)}.}
\]
The map $\pi_A$ does not depend on the choice of $a$, and 
each fiber of $\pi_A$ is a  $p$-dimensional closed complex submanifold of $(\mathbb{C}^*)^n$ invariant under the action of $T_{H \cap \mathbb{Z}^n}$.

\begin{definition}
Let $\mu$ be a complex Borel measure on $S_{\mathbb{Z}^n/H\cap \mathbb{Z}^n}$.
We define a $(p,p)$-dimensional closed current $\mathscr{T}_{A}(\mu)$ on $(\mathbb{C}^*)^n$ by
\[
\mathscr{T}_A(\mu):=\int_{x \in S_{\mathbb{Z}^n/(H \cap \mathbb{Z}^n)}} \big[\pi_A^{-1}(x)\big] \ d\mu(x).
\]
When $\mu$ is the normalized Haar measure on $S_{\mathbb{Z}^n/(H \cap \mathbb{Z}^n)}$,
we write
\[
\mathscr{T}_A:=\mathscr{T}_A(\mu).
\]
\end{definition}

The current $\mathscr{T}_A(\mu)$ is strongly positive if and only if $\mu$ is a positive measure,
and the construction is equivariant with respect to the action of $\mathbb{R}^n$ by translations:
\[
\mathscr{T}_{A-b}(\mu)=(e^{-b})^*\big(\mathscr{T}_{A}(\mu)\big), \qquad b \in \mathbb{R}^n.
\]

Note that $\mathscr{T}_A(\mu)$ has measure coefficients: For each open subset $U \subseteq (\mathbb{C}^*)^n$, the restriction of  $\mathscr{T}_A(\mu)|_U$ can be written in a unique way 
\[
\mathscr{T}_A(\mu)|_U=\sum_{|I|=|J|=n-p} \mu_{IJ} \ dz_I \wedge d\bar z_J,
\]
where $z_1,\ldots,z_n$ are coordinate functions and $\mu_{IJ}$ are complex Borel measures on $U$.
This expression can be used to define the current $\mathbf{1}_B \mathscr{T}_A(\mu)$, where $\mathbf{1}_B$ is the characteristic function of a Borel subset $B \subseteq (\mathbb{C}^*)^n$. We cover the torus by relatively compact open subsets $U \subseteq (\mathbb{C}^*)^n$, and set
\[
\mathbf{1}_B\mathscr{T}_A(\mu)|_U:=\sum_{|I|=|J|=n-p} \mu_{IJ}|_B \ dz_I \wedge d\bar z_J.
\]

\subsection{}

A \emph{rational polyhedron} in $\mathbb{R}^n$ is an intersection of finitely many half-spaces of the form
\[
\langle u,m\rangle \ge c, \quad m \in (\mathbb{Z}^n)^\vee, \quad c \in \mathbb{R}.
\]
Let $\sigma$ be a $p$-dimensional rational polyhedron in $\mathbb{R}^n$. We define
\begin{eqnarray*}
\text{aff}(\sigma)&:=& \text{the affine span of $\sigma$},\\
\sigma^\circ \ &:=& \text{the interior of $\sigma$ in $\text{aff}(\sigma)$},\\
H_\sigma \ &:=& \text{the linear subspace parallel to $\text{aff}(\sigma)$}.
\end{eqnarray*}
The \emph{normal lattice} of $\sigma$ is the quotient group
\[
N(\sigma):=\mathbb{Z}^n/(H_\sigma \cap \mathbb{Z}^n).
\]
The normal lattice defines the $(n-p)$-dimensional vector spaces
\[
N(\sigma)_\mathbb{R}:=\mathbb{R} \otimes_\mathbb{Z} N(\sigma), \qquad N(\sigma)_\mathbb{C}:=\mathbb{C} \otimes_\mathbb{Z} N(\sigma).
\]

\begin{definition}
Let $\mu$ be a complex Borel measure on $S_{N(\sigma)}$.
\begin{enumerate}[(1)]
\item We define a submersion $\pi_\sigma$ as the restriction of $\pi_{\text{aff}(\sigma)}$ to $\text{Log}^{-1}(\sigma^\circ)$:
\[
\pi_\sigma: \text{Log}^{-1}(\sigma^\circ) \longrightarrow S_{N(\sigma)}.
\]
\item We define a $(p,p)$-dimensional current $\mathscr{T}_\sigma(\mu)$ on $(\mathbb{C}^*)^n$ by
\[
\mathscr{T}_\sigma(\mu):=\mathbf{1}_{\text{Log}^{-1}(\sigma)} \mathscr{T}_{\text{aff}(\sigma)}(\mu).
\]
\end{enumerate}
When $\mu$ is the normalized Haar measure on $S_{N(\sigma)}$, we write
\[
\mathscr{T}_\sigma:=\mathscr{T}_\sigma(\mu).
\]
\end{definition}

Each fiber $\pi_\sigma^{-1}(x)$ is a $p$-dimensional complex manifold, being an open subset of the $p$-dimensional closed complex submanifold $\pi_{\text{aff}(\sigma)}^{-1}(x) \subseteq (\mathbb{C}^*)^n$.
The closure $\overline{\pi^{-1}_\sigma(x)}$ is a manifold with piecewise smooth boundary, and
\[
\mathscr{T}_\sigma(\mu)= \int_{x \in S_{N(\sigma)}} \big[\overline{\pi^{-1}_\sigma(x)}\big]d\mu(x).
\]
In other words, $\mathscr{T}_\sigma(\mu)$ is the trivial extension to $(\mathbb{C}^*)^n$ of the pullback of the $0$-dimensional current $d\mu$ along the oriented submersion $\pi_\sigma$. We compute the boundary of $\mathscr{T}_\sigma(\mu)$ in Proposition \ref{BoundaryPolyhedron} below.

The construction is equivariant with respect to the action of $\mathbb{R}^n$ by translations:
\[
\mathscr{T}_{\sigma-b}(\mu)=(e^{-b})^*\big(\mathscr{T}_{\sigma}(\mu)\big), \qquad b \in \mathbb{R}^n.
\]
The current $\mathscr{T}_\sigma(\mu)$ is strongly positive if and only if the measure $\mu$ is positive, and
its support satisfies
\[
|\mathscr{T}_\sigma(\mu)| \subseteq |\mathscr{T}_\sigma|=\text{Log}^{-1}(\sigma) \subseteq (\mathbb{C}^*)^n.
\]

\subsection{}

A polyhedral complex in $\mathbb{R}^n$ is \emph{locally finite} if any compact subset of $\mathbb{R}^n$ intersects only finitely many cells. It is easy to see that the construction of $\mathscr{T}_{\sigma}(\mu)$ behaves well with respect to subdivisions: 

\begin{proposition}\label{ConeCurrents}
If a $p$-dimensional rational polyhedron $\sigma$ is a union of $p$-dimensional rational polyhedrons $\sigma_i$ in a locally finite polyhedral complex,
then
\[
\mathscr{T}_\sigma(\mu)=\sum_{i} \mathscr{T}_{\sigma_i}(\mu).
\]
\end{proposition}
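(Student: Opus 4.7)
The plan is to reduce the identity to an identity of indicator functions multiplying a single fixed current. Since $\sigma$ and each $\sigma_i$ are $p$-dimensional and $\sigma=\bigcup_i\sigma_i$, every $\sigma_i$ has the same affine span $A:=\text{aff}(\sigma)$, the same parallel linear subspace $H=H_\sigma$, and the same normal lattice $N(\sigma_i)=N(\sigma)$. Consequently $\mathscr{T}_{\text{aff}(\sigma_i)}(\mu)=\mathscr{T}_A(\mu)$ for every $i$, and unfolding the definition of $\mathscr{T}_\sigma(\mu)$ the proposition becomes the identity of currents
\[
\mathbf{1}_{\text{Log}^{-1}(\sigma)}\,\mathscr{T}_A(\mu)\;=\;\sum_{i}\mathbf{1}_{\text{Log}^{-1}(\sigma_i)}\,\mathscr{T}_A(\mu).
\]

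Next I would analyze the pointwise difference of characteristic functions. Because the $\sigma_i$ cover $\sigma$, the function $\mathbf{1}_\sigma-\sum_i\mathbf{1}_{\sigma_i}$ is supported on $E:=\bigcup_{i\neq j}(\sigma_i\cap\sigma_j)$. Since $\sigma_i$ and $\sigma_j$ are distinct $p$-dimensional cells in the ambient polyhedral complex, their intersection is a common face of both, hence a rational polyhedron of dimension strictly less than $p$. Local finiteness of the complex ensures that the right hand sum is locally finite, so it defines a current on each relatively compact open subset of $(\mathbb{C}^*)^n$, and reduces the proposition to the key vanishing statement:
\[
\mathbf{1}_{\text{Log}^{-1}(\tau)}\,\mathscr{T}_A(\mu)\;=\;0\quad\text{for every rational polyhedron $\tau\subseteq A$ with $\dim\tau<p$.}
\]

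To prove this vanishing I would use the fiber description $\mathscr{T}_A(\mu)=\int_{S_{N(\sigma)}}[\pi_A^{-1}(x)]\,d\mu(x)$ already available in the construction. Each fiber $\pi_A^{-1}(x)$ is a $p$-dimensional closed complex submanifold of $(\mathbb{C}^*)^n$, and the restriction of $\text{Log}$ to this fiber is a smooth submersion onto the $p$-dimensional affine plane $A$. Hence the intersection $\pi_A^{-1}(x)\cap\text{Log}^{-1}(\tau)$ has real dimension $p+\dim\tau<2p$ inside the $2p$-real-dimensional fiber, and therefore carries no volume with respect to the induced volume form on $\pi_A^{-1}(x)$. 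The coefficient measures $\mu_{IJ}$ appearing in the local representation $\mathscr{T}_A(\mu)|_U=\sum\mu_{IJ}\,dz_I\wedge d\bar z_J$ disintegrate as integrals of smooth fiber-volume measures against $d\mu$, so $\mu_{IJ}(\text{Log}^{-1}(\tau)\cap U)=0$, which is the required vanishing by the coefficient-wise definition of $\mathbf{1}_B\mathscr{T}_A(\mu)$.

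The main obstacle is this last step, namely justifying that the coefficient measures of the fiber-integration current $\mathscr{T}_A(\mu)$ really disintegrate along the fibers of $\pi_A$ as products of $d\mu$ with smooth fiber volumes. I would handle this by choosing holomorphic coordinates adapted to the split exact sequence $0\to T_{H\cap\mathbb{Z}^n}\to(\mathbb{C}^*)^n\to T_{N(\sigma)}\to 0$ of the preceding diagrams, in which $\pi_A$ becomes a projection onto the second factor and Log is essentially a product map. In these coordinates the factorization of the measure coefficients becomes manifest and the vanishing on preimages of lower-dimensional polyhedra is an immediate application of Fubini's theorem.
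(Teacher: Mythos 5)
Your argument is correct, and the key geometric fact it turns on --- that the overlaps $\sigma_i\cap\sigma_j$ are lower-dimensional, hence their preimages carry no mass inside the $p$-dimensional complex fibers of $\pi_A$ --- is exactly the one the paper uses. The difference is one of ordering. The paper works fiber-by-fiber before averaging: since all $\sigma_i$ share the affine span of $\sigma$, it observes for each fixed $x\in S_{N(\sigma)}$ that $\overline{\pi_\sigma^{-1}(x)}=\bigcup_i\overline{\pi_{\sigma_i}^{-1}(x)}$, deduces the identity of integration currents $\big[\overline{\pi_\sigma^{-1}(x)}\big]=\sum_i\big[\overline{\pi_{\sigma_i}^{-1}(x)}\big]$ (elementary additivity over a locally finite subdivision of a $p$-manifold, since overlaps have measure zero there), and then integrates against $d\mu$. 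You instead integrate first, reducing to the indicator identity $\mathbf{1}_{\text{Log}^{-1}(\sigma)}\mathscr{T}_A(\mu)=\sum_i\mathbf{1}_{\text{Log}^{-1}(\sigma_i)}\mathscr{T}_A(\mu)$ and then showing $\mathbf{1}_{\text{Log}^{-1}(\tau)}\mathscr{T}_A(\mu)=0$ for $\dim\tau<p$. This is a correct route, but it forces the disintegration/Fubini step you flag as the ``main obstacle'': you must know the coefficient measures of the averaged current $\mathscr{T}_A(\mu)$ decompose as fiber volume times $d\mu$ in order to see the vanishing. The paper's ordering sidesteps this entirely because the null-set observation is made on a single fiber, where it is just the classical additivity of integration currents; the only use of local finiteness is the same in both proofs, to justify interchanging the sum over $i$ with the integral over $x$ (or, in your version, with the restriction to a relatively compact chart).
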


The sum is well-defined because the subdivision of $\sigma$ is locally finite.

The boundary of $\mathscr{T}_\sigma(\mu)$ has measure coefficients, and can be understood geometrically from the restrictions of the logarithm map for $\mathbb{Z}^n$ to fibers of $\pi_{\text{aff}(\sigma)}$:
\[
l_{\sigma,x}:\pi^{-1}_{\text{aff}(\sigma)}(x) \longrightarrow \text{aff}(\sigma), \qquad x \in S_{N(\sigma)}.
\]
Each $l_{\sigma,x}$ is a translation of the logarithm map for $H_\sigma \cap \mathbb{Z}^n$, and hence is a  submersion. 
We have
\[
\pi_\sigma^{-1}(x)=l_{\sigma,x}^{-1}(\sigma^\circ). 
\]
Since $l_{\sigma,x}$ is a submersion, the closure of the inverse image is the inverse image of the closure.
In particular, the closure of $l_{\sigma,x}(\sigma^\circ)$ in the ambient torus is  $l_{\sigma,x}^{-1}(\sigma)$. The closure has
the piecewise smooth boundary
\[
\partial \Big( l^{-1}_{\sigma,x}(\sigma)\Big)=\bigcup_\tau \ l_{\sigma,x}^{-1}(\tau),
\]
where the union is over all codimension $1$ faces $\tau$ of $\sigma$. The smooth locus of the boundary is the disjoint union
\[
\coprod_\tau \ l_{\sigma,x}^{-1}(\tau^\circ). 
\]
The complex manifold $l_{\sigma,x}(\sigma^\circ)$ has a canonical orientation, and it induces an orientation on each of its boundary components $l_{\sigma,x}^{-1}(\tau^\circ)$, each with  a real codimension $1$. 

\begin{proposition}\label{BoundaryPolyhedron}
For any complex Borel measure $\mu$ on $S_{N(\sigma)}$,
\[
d\mathscr{T}_\sigma(\mu)=-\sum_{\tau \subset \sigma} \Bigg( \int_{x \in S_{N(\sigma)}}    \big[l_{\sigma,x}^{-1}(\tau)\big] d\mu(x) \Bigg),
\]
where  the sum is over all codimension $1$ faces $\tau$ of $\sigma$. 
\end{proposition}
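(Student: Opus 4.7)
My plan is to pair both sides with an arbitrary test form and reduce the identity to the classical Stokes theorem applied fiberwise over $S_{N(\sigma)}$. The key inputs are the integral presentation
\[
\mathscr{T}_\sigma(\mu) = \int_{x \in S_{N(\sigma)}} \bigl[l_{\sigma,x}^{-1}(\sigma)\bigr] \, d\mu(x),
\]
recorded just before the statement, together with the fact that each fiber $l_{\sigma,x}^{-1}(\sigma)$ is a real $2p$-dimensional oriented manifold with piecewise smooth boundary
\[
\partial \bigl(l_{\sigma,x}^{-1}(\sigma)\bigr) = \bigcup_{\tau \subset \sigma} l_{\sigma,x}^{-1}(\tau),
\]
whose top-dimensional smooth stratum $\coprod_\tau l_{\sigma,x}^{-1}(\tau^\circ)$ carries the orientation induced from the complex orientation on $l_{\sigma,x}^{-1}(\sigma^\circ)$, as already spelled out in the text.

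Fix a test form $\varphi \in \mathscr{D}^{2p-1}((\mathbb{C}^*)^n)$. Using the paper's sign convention $\langle d\mathscr{T}, \varphi \rangle = (-1)^{2p+1} \langle \mathscr{T}, d\varphi \rangle$ for a $2p$-dimensional current $\mathscr{T}$, I would compute
\[
\langle d\mathscr{T}_\sigma(\mu), \varphi \rangle = -\int_{x \in S_{N(\sigma)}} \left( \int_{l_{\sigma,x}^{-1}(\sigma)} d\varphi \right) d\mu(x),
\]
and then invoke the classical Stokes theorem on each fiber to rewrite the inner integral as $\sum_{\tau \subset \sigma} \int_{l_{\sigma,x}^{-1}(\tau^\circ)} \varphi$. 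Swapping the finite sum with the outer integral over $x$ identifies the right-hand side with $-\sum_\tau \bigl\langle \int_x [l_{\sigma,x}^{-1}(\tau)] \, d\mu(x), \varphi \bigr\rangle$, which is precisely the claimed formula.

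The technical points to address are twofold. First, I must justify bringing the pairing under the $x$-integral, both for $\mathscr{T}_\sigma(\mu)$ itself and for each boundary contribution; this is a Fubini-type exchange and works because $\mathrm{supp}(\varphi)$ is compact in $(\mathbb{C}^*)^n$, so $\text{Log}(\mathrm{supp}(\varphi)) \cap \sigma$ is a compact subset of $\sigma$ that meets only finitely many faces, and the integrand is bounded uniformly in $x$ and integrable against the finite measure $\mu$. Second, I must verify that the boundary orientation produced by Stokes' theorem on each $l_{\sigma,x}^{-1}(\sigma)$ agrees with the orientation convention fixed in the paragraph preceding the statement; this is the bookkeeping step that produces the overall minus sign, coming from $(-1)^{2p+1} = -1$. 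Neither point is a serious obstacle, since the submersion $\pi_{\text{aff}(\sigma)}$ is locally trivial and the fiberwise Stokes theorem applies uniformly in $x$; the main work is really just careful sign tracking.
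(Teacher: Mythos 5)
Your proposal is correct and follows essentially the same route as the paper: both reduce the identity to fiberwise Stokes' theorem on the manifolds-with-boundary $l_{\sigma,x}^{-1}(\sigma)$ and then integrate over $x \in S_{N(\sigma)}$. You are more explicit about unwinding the current pairing, the sign $(-1)^{2p+1}=-1$, and the Fubini exchange, whereas the paper works directly at the level of currents and, as a preliminary technical step, subdivides $\sigma$ so it becomes a manifold with corners before invoking Stokes; otherwise the two arguments coincide.
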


It follows that the support of $d\mathscr{T}_\sigma(\mu)$ satisfies
\[
 |d\mathscr{T}_\sigma(\mu) |  \subseteq |d\mathscr{T}_\sigma | = \bigcup_{\tau\subset \sigma} \ \text{Log}^{-1}(\tau) \subseteq (\mathbb{C}^*)^n.
\]

\begin{proof}
Subdividing $\sigma$ if necessary, we may assume that $\sigma$ is a manifold with corners.
By Stokes' theorem, 
\[
d\big[l_{\sigma,x}^{-1}(\sigma)\big]=-\sum_{\tau\subset \sigma} \big[l_{\sigma,x}^{-1}(\tau)\big], \qquad x \in S_{N(\sigma)}.
\]
Since $\pi_\sigma^{-1}(x)=l_{\sigma,x}^{-1}(\sigma^\circ)$, we have
\[
d\mathscr{T}_\sigma(\mu)= \int_{x \in S_{N(\sigma)}} d\big[\overline{\pi^{-1}_\sigma(x)}\big]d\mu(x)=-\sum_{\tau\subset \sigma} \Bigg( \int_{x \in S_{N(\sigma)}}    \big[l_{\sigma,x}^{-1}(\tau)\big] d\mu(x) \Bigg).
\]
\end{proof}

We consider the important special case when $\sigma$ is a $p$-dimensional \emph{unimodular cone} in $\mathbb{R}^n$, that is, a cone  generated by part of a lattice basis $u_1,\ldots,u_p$ of $\mathbb{Z}^n$.
Let $\tilde x$ be an element of $(S^1)^n$, and consider the closed embedding given by the monomial map 
\[
(\mathbb{C}^*)^p \longrightarrow (\mathbb{C}^*)^n, \qquad z \longmapsto \tilde x \cdot z^{[u_1,\ldots,u_p]},
\]
where $[u_1,\ldots,u_p]$ is the matrix with column vectors $u_1,\ldots,u_p$.
If $x$ is the image of $\tilde x$ in $S_{N(\sigma)}$ and $\tau$ is the cone generated by $u_2,\ldots,u_p$, then the map restricts to diffeomorphisms
\begin{eqnarray*}
\mathbb{C}^*\times (\mathbb{C}^*)^{p-1}  &\simeq& \pi^{-1}_{\text{aff}(\sigma)}(x),\\
 \mathbb{D}^* \times (\mathbb{D}^*)^{p-1} &\simeq&l_{\sigma,x}^{-1}(\sigma^\circ),\\
S^1 \times (\mathbb{D}^*)^{p-1} &\simeq& l^{-1}_{\sigma,x}(\tau^\circ).
\end{eqnarray*}

\subsection{}

A $p$-dimensional \emph{weighted complex} in $\mathbb{R}^n$ is a locally finite polyhedral complex $\mathscr{C}$ such that
\begin{enumerate}[(1)]
\item each inclusion-maximal cell $\sigma$ in $\mathscr{C}$ is rational, 
\item each inclusion-maximal cell $\sigma$ in $\mathscr{C}$ is $p$-dimensional, and
\item each  inclusion-maximal cell $\sigma$ in $\mathscr{C}$ is assigned a complex number $\text{w}_\mathscr{C}(\sigma)$.
\end{enumerate}
The weighted complex $\mathscr{C}$ is said to be \emph{positive} if,
for all $p$-dimensional cells $\sigma$ in $\mathscr{C}$,
\[
\text{w}_\mathscr{C}(\sigma) \ge 0.
\]
The \emph{support} $|\mathscr{C}|$ of $\mathscr{C}$ is the union of all $p$-dimensional cells of $\mathscr{C}$ with nonzero weight.

\begin{definition}
A $p$-dimensional weighted complex $\mathscr{C}'$ is a \emph{refinement} of $\mathscr{C}$ if 
$|\mathscr{C}'|=|\mathscr{C}|$
and each $p$-dimensional cell $\sigma' \in \mathscr{C}'$ with nonzero weight is contained in some $p$-dimensional cell $\sigma \in \mathscr{C}$ with 
\[
\text{w}_{\mathscr{C}'}(\sigma')=\text{w}_{\mathscr{C}}(\sigma).
\]
If $p$-dimensional weighted complexes $\mathscr{C}_1$ and $\mathscr{C}_2$ have a common refinement, we write
\[
\mathscr{C}_1 \sim \mathscr{C}_2.
\]
This defines an equivalence relation on the set of $p$-dimensional weighted complexes in $\mathbb{R}^n$.
\end{definition}

Note that any two $p$-dimensional weighted complexes in $\mathbb{R}^n$ can be added after suitable refinements of each. 
This gives the set of equivalence classes of $p$-dimensional weighted complexes in $\mathbb{R}^n$ the structure of a complex vector space.

\begin{definition}
We define a $(p,p)$-dimensional current $\mathscr{T}_\mathscr{C}$ on $(\mathbb{C}^*)^n$ by
\[
\mathscr{T}_\mathscr{C}:=\sum_{\sigma} \text{w}_\mathscr{C}(\sigma)\ \mathscr{T}_\sigma,
\]
where the sum is over all $p$-dimensional cells in $\mathscr{C}$. 
\end{definition}

For an explicit construction of $\mathscr{T}_\mathscr{C}$ involving coordinates, see \cite{Babaee}.
If $\mathscr{C}-b$ is the weighted complex obtained by translating $\mathscr{C}$ by $b \in \mathbb{R}^n$, then
\[
\mathscr{T}_{\mathscr{C}-b}=(e^{-b})^*(\mathscr{T}_\mathscr{C}). 
\]
The current $\mathscr{T}_\mathscr{C}$ is strongly positive if and only if the weighted complex $\mathscr{C}$ is positive. The support of $\mathscr{T}_\mathscr{C}$ is the closed subset 
\[
|\mathscr{T}_\mathscr{C}|=\text{Log}^{-1}|\mathscr{C}| \subseteq (\mathbb{C}^*)^n.
\]

Proposition \ref{ConeCurrents} implies that equivalent weighted complexes define the same current, and hence there is a map
from the set of equivalence classes of weighted complexes
\[
\varphi: \{\mathscr{C}\} \longmapsto \mathscr{T}_\mathscr{C}.
\]
For $p$-dimensional weighted complexes $\mathscr{C}_1,\mathscr{C}_2$ and complex numbers $c_1,c_2$, we have
\[
\mathscr{T}_{c_1  \{\mathscr{C}_1\}+c_2 \{\mathscr{C}_2\}}  =c_1 \mathscr{T}_{\{\mathscr{C}_1\}}+c_2  \mathscr{T}_{\{\mathscr{C}_2\}}.
\]
It is clear that the kernel of the linear map $\varphi$ is trivial, and hence 
\[
\mathscr{C}_1 \sim \mathscr{C}_2 \quad \text{if and only if} \quad \mathscr{T}_{\mathscr{C}_1}=\mathscr{T}_{\mathscr{C}_2}.
\]

\subsection{}

Let $\tau$ be a codimension $1$ face of a $p$-dimensional rational polyhedron $\sigma$.
The difference of $\sigma$ and $\tau$ generates a $p$-dimensional rational polyhedral cone containing $H_\tau$, defining a ray
in the normal space
\[
\text{cone}(\sigma-\tau)/H_\tau \subseteq H_\sigma/H_\tau \subseteq \mathbb{R}^n/H_\tau = N(\tau)_\mathbb{R}.
\]
We write $u_{\sigma/\tau}$ for the primitive generator of this ray in the lattice $N(\tau)$. For any $ b \in \mathbb{R}^n$,
\[
u_{\sigma-b/\tau-b}=u_{\sigma/\tau}.
\]

\begin{definition}\label{BalancingCondition}
A $p$-dimensional weighted complex $\mathscr{C}$ satisfies the \emph{balancing condition} at $\tau$ if
\[
\sum_{\sigma\supset \tau} \text{w}_{\mathscr{C}}(\sigma) u_{\sigma/\tau}=0  
\]
in the complex vector space $N(\tau)_\mathbb{C}$, where the sum is over all $p$-dimensional cells $\sigma$ in $\mathscr{C}$ containing $\tau$ as a face.
A weighted complex is \emph{balanced} if it satisfies the balancing condition at each of its codimension $1$ cells.
\end{definition}

A \emph{tropical variety} is a positive and balanced weighted complex with finitely many cells, and a \emph{tropical current} is the current associated to a tropical variety.
Our first main result is the following criterion for the closedness of $\mathscr{T}_\mathscr{C}$, cf. \cite[Theorem 3.1.8]{Babaee}.

\begin{theorem}\label{Main1}
A weighted complex $\mathscr{C}$ is balanced if and only if $\mathscr{T}_{\mathscr{C}}$ is closed.
\end{theorem}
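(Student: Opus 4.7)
The plan is to apply Proposition \ref{BoundaryPolyhedron} to each $p$-dimensional cell, regroup boundary contributions by codimension-one faces, and reduce the problem to a local analysis at each such face. By linearity,
\[
d\mathscr{T}_\mathscr{C} = \sum_\sigma w_\mathscr{C}(\sigma)\, d\mathscr{T}_\sigma = -\sum_\tau\ \sum_{\sigma \supset \tau} w_\mathscr{C}(\sigma) \int_{x \in S_{N(\sigma)}} \bigl[l_{\sigma,x}^{-1}(\tau)\bigr] d\mu(x),
\]
where the outer sum is over all codimension-one cells $\tau$ of $\mathscr{C}$. Since the complex is locally finite and the open sets $\textrm{Log}^{-1}(\tau^\circ)$ are pairwise disjoint, $d\mathscr{T}_\mathscr{C}=0$ if and only if the partial boundary
\[
\mathscr{B}_\tau := \sum_{\sigma \supset \tau} w_\mathscr{C}(\sigma) \int_{x \in S_{N(\sigma)}} \bigl[l_{\sigma,x}^{-1}(\tau)\bigr] d\mu(x)
\]
vanishes for every codimension-one $\tau$. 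So it suffices to show, for each such $\tau$, that $\mathscr{B}_\tau = 0$ is equivalent to the balancing condition $\sum_{\sigma \supset \tau} w_\mathscr{C}(\sigma)\, u_{\sigma/\tau} = 0$ in $N(\tau)_\mathbb{C}$.

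To analyze $\mathscr{B}_\tau$ locally, I would translate so that $\textrm{aff}(\tau)$ passes through the origin and restrict attention to $\textrm{Log}^{-1}(\tau^\circ)$, where we have the submersion $\pi_\tau$ onto $S_{N(\tau)}$, and where the current $\mathscr{B}_\tau$ is invariant under $T_{H_\tau \cap \mathbb{Z}^n}$. After passing to the normal slice, the geometry reduces to the unimodular model described at the end of Section 2.5: for each $\sigma \supset \tau$ the submanifold $l_{\sigma,x}^{-1}(\tau)$ looks like $S^1 \times (\mathbb{D}^*)^{p-1}$ sitting inside $\mathbb{C}^* \times (\mathbb{D}^*)^{p-1}$, where the $S^1$-factor is the circle subgroup of the normal torus pointing along $u_{\sigma/\tau}$.

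The key step, which I would defer to the Fourier-theoretic treatment announced in Section \ref{SectionFourier}, is to decompose $\mathscr{B}_\tau$ into $S_{N(\tau)}$-isotypic components. For each character $m \in \text{Hom}(N(\tau),\mathbb{Z})$, pairing $\mathscr{B}_\tau$ with a test form of the shape $\pi_\tau^*(\chi_m)\,\varphi$ (with $\varphi$ a suitable basis form on $\textrm{Log}^{-1}(\tau^\circ)$) produces a scalar of the form $C(m,\varphi) \cdot \sum_{\sigma \supset \tau} w_\mathscr{C}(\sigma)\, \langle m, u_{\sigma/\tau} \rangle$, where $C(m,\varphi)$ is an explicit nonzero constant arising from integration over the circle fiber in the direction $u_{\sigma/\tau}$. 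Vanishing of all these pairings amounts to $\sum_{\sigma \supset \tau} w_\mathscr{C}(\sigma)\,\langle m, u_{\sigma/\tau}\rangle = 0$ for all $m \in N(\tau)^\vee$, which is the balancing condition at $\tau$.

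The main obstacle is this last step: one must verify that no accidental cancellation occurs between contributions from different $\sigma \supset \tau$ before the factor $\langle m, u_{\sigma/\tau}\rangle$ is extracted, and that the integrals of test forms against $[l_{\sigma,x}^{-1}(\tau)]$ give a clean, uniform nonzero constant $C(m,\varphi)$ independent of $\sigma$. Once one picks a coordinate system adapted to $\tau$ (so that $H_\tau$ is spanned by part of a lattice basis) and uses the equivariance of the construction under $T_{H_\tau \cap \mathbb{Z}^n}$ to restrict attention to forms pulled back from $S_{N(\tau)}$, this reduces to the one-dimensional Fourier computation $\int_{S^1} z^k\,\overline{z^l} \frac{dz\wedge d\bar z}{|z|^2} = 0$ unless $k=l$, together with the identification of the relevant integer exponent with $\langle m, u_{\sigma/\tau}\rangle$.
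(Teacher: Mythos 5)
Your plan is correct and takes essentially the same route as the paper: apply Proposition \ref{BoundaryPolyhedron} to each top cell, regroup boundary contributions by codimension-one faces $\tau$, localize on the pairwise disjoint sets $\text{Log}^{-1}(\tau^\circ)$, and identify vanishing of the local contribution at $\tau$ with the balancing condition via Fourier analysis on $S_{N(\tau)}$. The paper packages your proposed Fourier step into the averaging operator $\mathscr{A}_\tau$, whose linearity and injectivity (Propositions \ref{Linearity} and \ref{BoundaryComputation}, combined in Theorem \ref{Main1General}) are the precise form of the ``no accidental cancellation'' and ``clean constant $C(m,\varphi)$'' points you flag, with the Haar measure killing all nontrivial characters and the zero Fourier mode recovering $\langle u_{\sigma/\tau},m\rangle$.
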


Theorem \ref{Main1} follows from an explicit formula for the boundary of $\mathscr{T}_\mathscr{C}$ in Theorem \ref{Main1General}:
\[
d \mathscr{T}_\mathscr{C}=-\sum_\tau  \mathscr{A}_\tau\Big(\sum_{ \tau \subset \sigma} \text{w}_\mathscr{C}(\sigma)u_{\sigma/\tau}\Big).
\]
Here the first sum is over all codimension $1$ cells $\tau$ in $\mathscr{C}$, the second sum is over all $p$-dimensional cells $\sigma$ in $\mathscr{C}$ containing $\tau$, and $\mathscr{A}_\tau$ is an injective linear map constructed in Section \ref{SectionAveraging} using the averaging operator of the compact Lie group $S_{N(\tau)}$.

\subsection{}

Some properties of the current $\mathscr{T}_\mathscr{C}$ can be read off from the polyhedral geometry of $|\mathscr{C}|$.
We show that this is the case for the property of $\mathscr{T}_\mathscr{C}$ being strongly extremal.

\begin{definition}
A closed current $\mathscr{T}$ with measure coefficients is \emph{strongly extremal} if for any closed current $\mathscr{T}'$ with measure coefficients which has the same dimension and  support as $\mathscr{T}$ there is  a complex number $c$ such that $\mathscr{T}'=c \cdot \mathscr{T}$.
\end{definition}

If $\mathscr{T}$ is strongly positive and strongly extremal, then  $\mathscr{T}$ generates an extremal ray in the cone of strongly positive closed currents:
If $\mathscr{T}=\mathscr{T}_1+\mathscr{T}_2$ is any decomposition of $\mathscr{T}$ into strongly positive closed currents, then 
both $\mathscr{T}_1$ and $\mathscr{T}_2$ are nonnegative multiples of $\mathscr{T}$.
Indeed, we have
\[
|\mathscr{T}|=|\mathscr{T}+\mathscr{T}_1|=|\mathscr{T}+\mathscr{T}_2|,
\]
and hence there are constants $c_1$ and $c_2$ satisfying
\[
\mathscr{T}+\mathscr{T}_1=c_1 \cdot \mathscr{T}, \qquad \mathscr{T}+\mathscr{T}_2=c_2 \cdot \mathscr{T}, \qquad c_1,c_2 \ge 1.
\]

\begin{definition}
A balanced weighted complex $\mathscr{C}$ is \emph{strongly extremal} if for any balanced weighted complex $\mathscr{C}'$ which has the same dimension and  support as $\mathscr{C}$ there is a complex number $c$ such that $\mathscr{C}' \sim c \cdot \mathscr{C}$. 
\end{definition}

A weighted complex in $\mathbb{R}^n$ is said to be \emph{non-degenerate} if its support is contained in no proper affine subspace of $\mathbb{R}^n$.
Our second main result provides a  new class of strongly extremal closed currents on $(\mathbb{C}^*)^n$.

\begin{theorem}\label{Main2}
A non-degenerate balanced weighted complex $\mathscr{C}$ is strongly extremal if and only if $\mathscr{T}_\mathscr{C}$ is strongly extremal.
\end{theorem}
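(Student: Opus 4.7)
\textbf{Proof plan for Theorem \ref{Main2-Intro}.}

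The easier implication ($\Leftarrow$) should follow immediately from Theorem \ref{Main1-Intro} together with the injectivity of the assignment $\{\mathscr{C}\} \longmapsto \mathscr{T}_{\mathscr{C}}$ noted just after Proposition \ref{ConeCurrents}: given any balanced weighted complex $\mathscr{C}'$ of the same dimension and support as $\mathscr{C}$, the current $\mathscr{T}_{\mathscr{C}'}$ is closed with measure coefficients and has support $\text{Log}^{-1}|\mathscr{C}'| = \text{Log}^{-1}|\mathscr{C}| = |\mathscr{T}_{\mathscr{C}}|$, so strong extremality of $\mathscr{T}_{\mathscr{C}}$ supplies $c \in \mathbb{C}$ with $\mathscr{T}_{\mathscr{C}'} = c \, \mathscr{T}_{\mathscr{C}}$ and injectivity returns $\mathscr{C}' \sim c \, \mathscr{C}$.

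For the forward direction, I would fix a closed current $\mathscr{T}'$ with measure coefficients sharing bidimension and support with $\mathscr{T}_{\mathscr{C}}$ and aim to reconstruct a weighted complex $\mathscr{C}'$ with $\mathscr{T}' = \mathscr{T}_{\mathscr{C}'}$ and $|\mathscr{C}'| = |\mathscr{C}|$. The first step is a local structural result on each open stratum $\text{Log}^{-1}(\sigma^\circ)$, where $\sigma$ ranges over the top-dimensional cells of $\mathscr{C}$ with nonzero weight. The fibers of the submersion $\pi_\sigma : \text{Log}^{-1}(\sigma^\circ) \to S_{N(\sigma)}$ are the maximal $p$-dimensional complex submanifolds inside $\text{Log}^{-1}(\sigma^\circ)$; I expect that a slicing argument along $\pi_\sigma$, combined with the fact that $\mathscr{T}'$ has bidimension $(p,p)$ and measure coefficients, will yield a complex Borel measure $\mu_\sigma$ on $S_{N(\sigma)}$ with
\[
\mathscr{T}'|_{\text{Log}^{-1}(\sigma^\circ)} = \mathscr{T}_\sigma(\mu_\sigma).
\]

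Closedness of $\mathscr{T}'$ should then translate, via the boundary formula underlying Theorem \ref{Main1-Intro}, into a measure-valued balancing condition at each codimension-one cell $\tau$, coupling the $\mu_\sigma$ for $\sigma \supset \tau$ through the primitive vectors $u_{\sigma/\tau}$. Fourier-expanding each $\mu_\sigma$ over the character group of $S_{N(\sigma)}$ and regrouping by characters of the ambient compact torus $S_{\mathbb{Z}^n}$ produces a decomposition
\[
\mathscr{T}' = \sum_{m \in (\mathbb{Z}^n)^{\vee}} \mathscr{T}'_m
\]
into $m$-isotypic components under the $S_{\mathbb{Z}^n}$-action, each closed with measure coefficients. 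A nonzero Fourier coefficient of $\mu_\sigma$ at $m$ requires $m \in H_\sigma^{\perp}$, and the measure balancing condition should propagate this constraint across all cells, forcing $H_\sigma \subseteq m^{\perp}$ for every $\sigma$ in the support. When $m \neq 0$, this would place $|\mathscr{C}|$ inside an affine translate of $m^{\perp}$, contradicting non-degeneracy, so only the $m = 0$ mode survives. The zero mode identifies $\mu_\sigma$ with a scalar multiple $w'(\sigma)$ of the normalized Haar measure, and the scalars $w'(\sigma)$ assemble into a balanced weighted complex $\mathscr{C}'$ with $\mathscr{T}' = \mathscr{T}_{\mathscr{C}'}$ and $|\mathscr{C}'| = |\mathscr{C}|$. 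Strong extremality of $\mathscr{C}$ then yields $\mathscr{C}' \sim c \, \mathscr{C}$, and hence $\mathscr{T}' = c \, \mathscr{T}_{\mathscr{C}}$.

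The main obstacle I anticipate is making rigorous how the measure-balancing condition propagates the annihilation $m \in H_\sigma^{\perp}$ from one top-dimensional cell to all others of nonzero weight. This is the precise place where non-degeneracy is used, and it should amount to a connectivity-type argument for the incidence graph of top-dimensional cells of $\mathscr{C}$ combined with the linear algebra of the primitive generators $u_{\sigma/\tau}$ in $N(\tau)_{\mathbb{C}}$. The slicing step is also delicate, though I expect it can be carried out using the abundant $S_{\mathbb{Z}^n}$-symmetry of the situation rather than by appeal to a generic structure theorem for closed currents with measure coefficients.
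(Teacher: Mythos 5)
Your $(\Leftarrow)$ direction and the overall scaffold for $(\Rightarrow)$ (slice on each stratum, Fourier-expand, isolate the $m$-isotypic pieces, use non-degeneracy) line up with the paper. But two steps are off.

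First, the slicing step: you propose to avoid a "generic structure theorem" and instead exploit the $S_{\mathbb{Z}^n}$-symmetry, but $\mathscr{T}'$ is an \emph{arbitrary} closed current with measure coefficients and the prescribed support; it is not $S_{\mathbb{Z}^n}$-invariant, so there is no symmetry of the current to exploit. What actually produces the measures $\mu_\sigma$ is precisely the structure theorem you want to avoid: Demailly's second theorem on support gives $\mathscr{T}'|_{\text{Log}^{-1}(\sigma^\circ)}=\pi_\sigma^*(d\mu_\sigma)$ on each open stratum, and the first theorem on support then shows that $\mathscr{T}'-\sum_\sigma\mathscr{T}_\sigma(\mu_\sigma)$ vanishes because its support lies in a CR-manifold of dimension $p-1$ (this is Lemma~\ref{SupportLemma}).

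Second, and more seriously, your plan for killing the $m\neq 0$ modes is the wrong mechanism. You want the balancing condition plus connectivity of the incidence graph to propagate the constraint $m\in M(\sigma)=H_\sigma^\perp$ from one maximal cell of $\mathscr{C}$ to all of them, so that non-degeneracy alone forces $m=0$. That propagation fails in general: the balancing at a codimension-one cell $\tau$ only constrains the cells $\sigma\supset\tau$ with $m\in M(\sigma)$, and nothing prevents these from forming a proper balanced subcomplex carrying a nonzero weight. The paper does something different and weaker in hypothesis: Theorem~\ref{ClosedBalanced} shows each $\mathscr{C}_\mathscr{T}(m)$ is a balanced weighted complex with support in $|\mathscr{C}|$; Lemma~\ref{NonDegenerate} and non-degeneracy show this support is \emph{proper} for $m\neq 0$; and then Lemma~\ref{ProperSupport} — which relies on the \emph{strong extremality} of $\mathscr{C}$, not on connectivity or local extremality — forces $\mathscr{C}_\mathscr{T}(m)\sim 0$. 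So strong extremality is used in the middle of the argument to annihilate the nonzero Fourier modes, not merely at the end to pin down the $m=0$ mode. This is the idea missing from your proposal, and it replaces the propagation step you flagged as the main obstacle.
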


This follows from Fourier analysis for tropical currents developed in the next section. 
A $0$-dimensional weighted complex in $\mathbb{R}^1$ shows that the assumption of non-degeneracy is necessary in Theorem \ref{Main2}, as the corresponding measure $\mu$ on $\text{Log}^{-1}(\{\text{pt}\})$ can be chosen arbitrarily. 

\begin{remark}
We note that there is an abundance of strongly extremal tropical varieties. For example, the Bergman fan of any simple matroid is a strongly extremal tropical variety; see \cite[Chapter III]{Huh} for the Bergman fan and the extremality.
Let $\mathscr{T}_M$ be the tropical current associated to the Bergman fan of a simple matroid $M$ on the ground set $\{0,1,\ldots,n\}$. If $M$ is representable over $\mathbb{C}$, then by \cite[Theorem 5.27]{Babaee} there are closed subvarieties $Z_i \subseteq (\mathbb{C}^*)^n$ of the ambient torus and positive real numbers $\lambda_i$ such that
\[
\mathscr{T}_M=\lim_{i \to \infty} \lambda_i [Z_i].
\]
It would be interesting to know whether $\mathscr{T}_M$ can be approximated as above when $M$ is not representable over $\mathbb{C}$. See \cite[Section 4.3]{Huh} for a related discussion.

\end{remark}


We end this section with a useful sufficient condition for the strong extremality of $\mathscr{C}$.

\begin{definition}
Let $\mathscr{C}$ be a $p$-dimensional weighted complex in $\mathbb{R}^n$.
\begin{enumerate}[(1)]
\item $\mathscr{C}$ is \emph{locally extremal} if, for every codimension $1$ cell $\tau$ in $\mathscr{C}$, every proper subset of
\[
\big\{u_{\sigma/\tau} \mid \text{$\sigma$ is a $p$-dimensional cell in $\mathscr{C}$ containing $\tau$ with nonzero $\text{w}_\mathscr{C}(\sigma)$}\big\}
\]
is linearly independent in the normal space $N(\tau)_\mathbb{R}$.
\item $\mathscr{C}$ is \emph{connected in codimension $1$} if, for every pair of $p$-dimensional cells $\sigma', \sigma''$ in $\mathscr{C}$ with nonzero weights, there are codimension $1$ cells $\tau_1,\ldots,\tau_l$ and  $p$-dimensional cells $\sigma_0,\sigma_1,\ldots,\sigma_l$ in $\mathscr{C}$ with nonzero weights such that
\[
\sigma'=\sigma_0 \supset \tau_1 \subset \sigma_1 \supset \tau_2 \subset \sigma_2 \supset \cdots \supset \tau_l \subset \sigma_l= \sigma''.
\]
\end{enumerate}
\end{definition}

The following sufficient condition for the strong extremality of $\mathscr{C}$ was used as a definition of strong extremality of $\mathscr{C}$ in \cite{Babaee}.

\begin{proposition}\label{LocalGlobalExtremality}
If a balanced weighted complex $\mathscr{C}$ is locally extremal and connected in codimension $1$, then it is strongly extremal.
\end{proposition}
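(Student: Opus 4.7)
The plan is to exhibit the constant $c$ cell-by-cell, propagating it along a chain of top-dimensional cells joined through codimension $1$ faces, using local extremality to force the ratio of weights at each step.

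First, by passing to a common refinement of $\mathscr{C}$ and $\mathscr{C}'$, I can assume that both weighted complexes share a common underlying polyhedral complex; this is possible because $|\mathscr{C}|=|\mathscr{C}'|$ and refinements preserve the equivalence classes as well as all balancing conditions. Throughout the rest of the argument I work on this common refinement, where the set of $p$-dimensional cells with nonzero weight is identical for $\mathscr{C}$ and $\mathscr{C}'$, since the support is a $p$-dimensional set. Pick a $p$-dimensional cell $\sigma_0$ in $\mathscr{C}$ with nonzero weight and set
\[
c := \frac{\text{w}_{\mathscr{C}'}(\sigma_0)}{\text{w}_\mathscr{C}(\sigma_0)}.
\]
The goal is to show that $\text{w}_{\mathscr{C}'}(\sigma)=c\cdot \text{w}_\mathscr{C}(\sigma)$ for every $p$-dimensional cell $\sigma$ with nonzero weight, which gives $\mathscr{C}' \sim c \cdot \mathscr{C}$.

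The key local step is the following. Let $\tau$ be a codimension $1$ cell and enumerate the $p$-dimensional cells $\sigma_1,\dots,\sigma_k$ containing $\tau$ with nonzero weights in $\mathscr{C}$ (equivalently in $\mathscr{C}'$). The two balancing conditions give
\[
\sum_{i=1}^{k} \text{w}_\mathscr{C}(\sigma_i)\, u_{\sigma_i/\tau}=0, \qquad
\sum_{i=1}^{k} \text{w}_{\mathscr{C}'}(\sigma_i)\, u_{\sigma_i/\tau}=0,
\]
and both relations are nontrivial because all weights involved are nonzero. Local extremality says that any proper subset of $\{u_{\sigma_i/\tau}\}_{i=1}^{k}$ is linearly independent in $N(\tau)_\mathbb{C}$, so the space of linear relations among the full collection is at most one-dimensional; combined with the existence of the above nontrivial relation, it is exactly one-dimensional. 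Consequently the two weight vectors $(\text{w}_\mathscr{C}(\sigma_i))_i$ and $(\text{w}_{\mathscr{C}'}(\sigma_i))_i$ must be proportional, and hence there is a single scalar $c(\tau)$ with $\text{w}_{\mathscr{C}'}(\sigma_i)=c(\tau)\, \text{w}_\mathscr{C}(\sigma_i)$ for every $i$.

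Now I propagate. Given any $p$-dimensional cell $\sigma'$ with nonzero weight, connectedness in codimension $1$ produces a chain
\[
\sigma_0 \supset \tau_1 \subset \sigma_1' \supset \tau_2 \subset \sigma_2' \supset \cdots \supset \tau_\ell \subset \sigma_\ell' = \sigma',
\]
with all $\sigma_j'$ and all intermediate cells having nonzero weight in $\mathscr{C}$. Applying the local step at $\tau_1$ gives $c(\tau_1)=c$ and $\text{w}_{\mathscr{C}'}(\sigma_1')=c\cdot\text{w}_\mathscr{C}(\sigma_1')$; applying it at $\tau_2$ then forces $c(\tau_2)=c$, and so on inductively down the chain, yielding $\text{w}_{\mathscr{C}'}(\sigma')=c\cdot\text{w}_\mathscr{C}(\sigma')$. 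Since $\sigma'$ was arbitrary, $\mathscr{C}' \sim c\cdot \mathscr{C}$, as required.

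The main obstacle in this plan is ensuring that the local propagation step is legitimate; specifically, one has to verify that $\mathscr{C}$ and $\mathscr{C}'$ genuinely share the same set of top-dimensional cells with nonzero weights after a common refinement. This is where the identification of supports combined with the convention that $|\mathscr{C}|$ counts only cells with nonzero weight is used in an essential way. Once that bookkeeping is in place, the linear-algebra input from local extremality (one-dimensional space of balancing relations at each codimension-$1$ face) and the combinatorial input from connectedness in codimension $1$ combine cleanly to give the result.
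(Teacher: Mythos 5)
Your proof is correct and takes essentially the same approach as the paper: pass to a common refinement so that $\mathscr{C}$ and $\mathscr{C}'$ share the same set of top-dimensional cells with nonzero weight, fix the ratio $c$ at one cell, and propagate it along codimension-$1$ chains using the balancing condition and local extremality. The only cosmetic difference is in the local step: you argue that the space of linear relations among the $u_{\sigma/\tau}$ is exactly one-dimensional (so the two weight vectors at $\tau$ must be proportional), whereas the paper subtracts the two balancing relations and observes that vanishing of the coefficient at $\sigma'$ forces vanishing at $\sigma''$ since any proper subset of the $u_{\sigma/\tau}$ is linearly independent; these are equivalent arguments. One thing worth stating explicitly (as the paper does) is that refining $\mathscr{C}$ preserves not only balancedness but also local extremality and connectedness in codimension $1$, since your propagation uses both of these on the refined complex.
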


\begin{proof}
Let $\mathscr{C}'$ be a $p$-dimensional balanced weighted complex with $|\mathscr{C}|=|\mathscr{C}'|$. 
We show that there is a complex number $c$ such that $\mathscr{C}' \sim c \cdot \mathscr{C}$.
Note that any refinement of $\mathscr{C}$ is balanced, locally extremal, and connected in codimension $1$. By replacing $\mathscr{C}$ and $\mathscr{C}'$ with their refinements, we may assume that 
the set of $p$-dimensional cells in $\mathscr{C}$ with nonzero weights is the set of $p$-dimensional cells in $\mathscr{C}'$ with nonzero weights.

We may suppose that $\mathscr{C}$ is not equivalent to $0$. 
Choose a $p$-dimensional cell $\sigma'$ in $\mathscr{C}$ with nonzero weight, and let $c$ be the complex number satisfying
\[
\text{w}_{\mathscr{C}'}(\sigma')=c \cdot \text{w}_\mathscr{C}(\sigma').
\]
We show that, for any other $p$-dimensional cell $\sigma''$ in $\mathscr{C}$ with nonzero weight,
\[
\text{w}_{\mathscr{C}'}(\sigma'')=c \cdot \text{w}_\mathscr{C}(\sigma'').
\]
Since $\mathscr{C}$ is connected in codimension $1$, there are codimension $1$ cells $\tau_1,\ldots,\tau_l$ and  $p$-dimensional cells $\sigma_0,\sigma_1,\ldots,\sigma_l$ in $\mathscr{C}$ with nonzero weights such that
\[
\sigma'=\sigma_0 \supset \tau_1 \subset \sigma_1 \supset \tau_2 \subset \sigma_2 \supset \cdots \supset \tau_l \subset \sigma_l=\sigma''.
\]
By induction on the minimal distance $l$ between $\sigma'$ and $\sigma''$ in $\mathscr{C}$, we are reduced to the case when $l=1$, that is, when $\sigma'$ and $\sigma''$ have a common codimension $1$ face $\tau$. The balancing conditions for $\mathscr{C}$ and $\mathscr{C}'$ at $\tau$ give
\[
\sum_{\sigma \supset \tau} \Big(\text{w}_{\mathscr{C}'}(\sigma)-c\cdot  \text{w}_\mathscr{C}(\sigma)\Big) u_{\sigma/\tau}=0,
\]
where the sum is over all $p$-dimensional cells $\sigma$ in $\mathscr{C}$ with nonzero weight that contain $\tau$. Since $\mathscr{C}$ is locally extremal, every proper subset of the vectors $u_{\sigma/\tau}$ is linearly independent, and hence
\[
\text{w}_{\mathscr{C}'}(\sigma')-c \cdot \text{w}_\mathscr{C}(\sigma')=0 \quad \text{implies} \quad \text{w}_{\mathscr{C}'}(\sigma'')-c \cdot \text{w}_\mathscr{C}(\sigma'')=0.
\]

\end{proof}

\section{Fourier analysis for tropical currents}\label{SectionFourier}

We develop necessary Fourier analysis on tori for proofs of Theorems \ref{Main1} and \ref{Main2}.

\subsection{}

Let $N$ be a finitely generated free abelian group, and let $M$ be the dual group $\text{Hom}_{\mathbb{Z}}(N,\mathbb{Z})$. 
The \emph{one-parameter subgroup} corresponding to $u \in N$ is the homomorphism
\[
\lambda^u:S^1 \longrightarrow S_N,\qquad z \longmapsto z \otimes u.
\]
The \emph{character} corresponding to $m \in M$ is the homomorphism
\[
\chi^m:S_N \longrightarrow S^1, \qquad z\otimes u \longmapsto z^{\langle u,m\rangle},
\]
where $\langle u,m\rangle$ denotes the dual pairing between elements of $N$ and $M$.

We orient the unit circle $S^1$ as the outer boundary of the complex manifold $\mathbb{D}^*$, the punctured unit disc in $\mathbb{C}^*$.
This makes each one-parameter subgroup of $S_N$ a $1$-dimensional current on $S_N$: 
The pairing between $\lambda^u$ and a smooth $1$-form $w$ is given by
\[
\langle \lambda^u,w\rangle:= \int_{S^1} (\lambda^u)^*w.
\]
We write $d\theta$ for the invariant $1$-form on $S^1$ with $\int_{S^1} d\theta =1$ corresponding to the chosen orientation. For $m \in M$, we define a smooth $1$-form $w(m)$ on $S_N$ by
\[
w(m):= (\chi^m)^*d\theta.
\]
Then we have
\[
\big\langle \lambda^u,w(m)\big\rangle=\int_{S^1} (\lambda^u)^*(\chi^m)^* \ d\theta=\langle u,m\rangle.
\]
Taking linear combinations of $1$-dimensional currents and smooth $1$-forms, the above gives the dual pairing between $N_\mathbb{C}$
and the dual Lie algebra of $S_N$. In particular, for $u_1,u_2 \in N$ and any invariant $1$-form $w$, we have
\[
\langle \lambda^{u_1+u_2},w\rangle=\langle \lambda^{u_1},w\rangle+
\langle \lambda^{u_2},w\rangle.
\]
Note however that, in general, $\lambda^{u_1+u_2} \neq \lambda^{u_1}+\lambda^{u_2}$ as $1$-dimensional currents on $S_N$.

We write $x^*(w)$ for the pullback of a smooth $1$-form $w$ along the multiplication map 
\[
\xymatrix{S_N \ar[r]^x& S_N}, \qquad x \in S_N.
\]

\begin{definition}
Let  $u \in N$, $m \in M$, and $\nu$ be a complex Borel measure on $S_{N}$. \begin{enumerate}[(1)]
\item The \emph{$\nu$-average} of a smooth $1$-form $w$ on $S_N$ is the smooth $1$-form
\[
\mathscr{A}(w,\nu):= \int_{x \in S_N} x^* (w) \ d\nu(x).
\]
\item The \emph{$\nu$-average} of $\lambda^u$ is the $1$-dimensional current $\mathscr{A}(\lambda^u,\nu)$ on $S_N$ defined by
\[
\big\langle\mathscr{A}(\lambda^u,\nu),w\big\rangle:= \int_{S^1} (\lambda^u)^* \mathscr{A}(w,\nu).
\]
\item The \emph{$m$-th Fourier coefficient}  of $\nu$ is the complex number
\[
\hat \nu(m):= \int_{x \in S_N} \chi^m \ d\nu(x)
\]
\end{enumerate}
When $\nu$ is the normalized Haar measure on $S_N$, we omit $\nu$ from the notation and write
\[
\mathscr{A}(w):=\mathscr{A}(w,\nu), \qquad \mathscr{A}(\lambda^u):= \mathscr{A}(\lambda^u,\nu).
\]
\end{definition}

We record here basic properties of the above objects.
Define
\[
\delta_k:=\begin{cases} 1& \text{if $k = 0$,}\\0&\text{if $k\neq 0$.}\end{cases}
\]

\begin{proposition}\label{PropositionFourier}
Let $u$ be an element of $N$, and let $m$ be an element of $M$.
\begin{enumerate}[(1)]
\item If $w$ is an invariant $1$-form on $S_N$, then
\[
\mathscr{A}(\chi^m w,\nu)=\hat \nu(m) \cdot \chi^m  w.
\]
\item If $w$ is an invariant $1$-form on $S_N$, then
\[
\langle \lambda^u,\chi^m w\rangle=\delta_{\langle u,m\rangle} \cdot \langle \lambda^u,w\rangle.
\]
\item If $w$ is an invariant $1$-form on $S_N$, then
\[
\big\langle \mathscr{A}(\lambda^u,\nu),\chi^m w\big\rangle=\delta_{\langle u,m\rangle}  \cdot \hat \nu(m) \cdot \langle \lambda^u, w \rangle.
\]
\end{enumerate}
\end{proposition}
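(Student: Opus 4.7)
The plan is to prove each of the three identities by direct computation from the definitions, relying systematically on two facts: $\chi^m$ is a group homomorphism, so $\chi^m(x y) = \chi^m(x)\chi^m(y)$; and, by definition of ``invariant,'' $x^*(w) = w$ for every $x \in S_N$. The orthogonality of characters on $S^1$, namely $\int_{S^1} z^k \, d\theta = \delta_k$ for $k \in \mathbb{Z}$, then closes the argument.

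For part (1), I would compute $x^*(\chi^m w)$ pointwise as $\chi^m(x) \cdot \chi^m \cdot w$, using multiplicativity of $\chi^m$ on the first factor and invariance of $w$ on the second. Pulling the scalar $\chi^m(x)$ outside the integral against $d\nu(x)$ then leaves
\[
\mathscr{A}(\chi^m w, \nu) = \chi^m w \cdot \int_{S_N} \chi^m \, d\nu = \hat\nu(m) \cdot \chi^m w.
\]

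For part (2), the key observation is that $(\lambda^u)^*(w)$ is an invariant $1$-form on $S^1$ because $\lambda^u$ is a Lie group homomorphism. Hence $(\lambda^u)^*(w) = c \cdot d\theta$ for some constant $c$, and integrating over $S^1$ identifies $c$ with $\langle \lambda^u, w\rangle$. On the other hand, the pullback $(\lambda^u)^*(\chi^m)$ is the character $z \mapsto z^{\langle u,m\rangle}$ on $S^1$. Substituting both into the defining integral yields
\[
\langle \lambda^u, \chi^m w\rangle = \langle \lambda^u, w\rangle \cdot \int_{S^1} z^{\langle u,m\rangle} \, d\theta = \delta_{\langle u,m\rangle} \cdot \langle \lambda^u, w\rangle,
\]
by the orthogonality relation on $S^1$.

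For part (3), I would simply chain parts (1) and (2). Unwinding the definition of $\mathscr{A}(\lambda^u,\nu)$ rewrites the pairing as $\int_{S^1} (\lambda^u)^* \mathscr{A}(\chi^m w,\nu)$; part (1) replaces $\mathscr{A}(\chi^m w, \nu)$ by $\hat\nu(m)\,\chi^m w$, and the scalar $\hat\nu(m)$ passes outside the pairing; finally part (2) evaluates the remaining $\langle \lambda^u, \chi^m w\rangle$ as $\delta_{\langle u,m\rangle}\langle \lambda^u, w\rangle$. There is no substantive obstacle in this proof: each identity is a matter of bookkeeping with the definitions plus one application of character orthogonality, and the only point that warrants care is distinguishing the pullback along left multiplication by $x \in S_N$ from the pullback along the one-parameter subgroup $\lambda^u$.
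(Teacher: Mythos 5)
Your proposal is correct and follows essentially the same route as the paper's proof: part (1) by computing $x^*(\chi^m w) = \chi^m(x)\,\chi^m w$ and integrating, part (2) by noting that $(\lambda^u)^*w$ is invariant on $S^1$ and applying character orthogonality, and part (3) by chaining (1) and (2) through the identity $\langle \mathscr{A}(\lambda^u,\nu),\chi^m w\rangle = \langle \lambda^u,\mathscr{A}(\chi^m w,\nu)\rangle$. Your version of part (2) merely makes explicit the identification $(\lambda^u)^*w = \langle\lambda^u,w\rangle\,d\theta$, which the paper leaves implicit.
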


\begin{proof}
Since $w$ is invariant and $\chi^m$ is a homomorphism, for each $x \in S_N$, we have
\[
x^*(\chi^m w)=x^*(\chi^m) \cdot x^*(w) = \chi^m(x) \ \chi^m \cdot w.
\]
Therefore,
\[
\mathscr{A}(\chi^m w,\nu)=\int_{x \in S_N} x^* (\chi^m w) \ d\nu(x)=\hat \nu(m) \cdot \chi^m w.
\]
This proves the first item. 

The second item follows from the computation
\[
\langle \lambda^u,\chi^m w\rangle=  \int_{S^1} (\lambda^u)^* (\chi^m w)=  \int_{S^1} z^{\langle u,m\rangle} (\lambda^u)^* w.
\]
The last integral is zero unless $\langle u,m\rangle$ is zero, because $(\lambda^u)^* w$ is an invariant $1$-form.

The third item is a combination of the first two:
\begin{eqnarray*}
\big\langle \mathscr{A}(\lambda^u,\nu),\chi^m w\big\rangle
=\big\langle \lambda^u,\mathscr{A}(\chi^m w,\nu)\big\rangle
=\delta_{\langle u,m\rangle} \cdot \hat \nu(m) \cdot \big\langle \lambda^u,\chi^m w\big\rangle.
\end{eqnarray*}
\end{proof}

Consider the split exact sequence associated to a primitive element $u$ of $N$:
\[
\xymatrix{0 \ar[r] & S^1 \ar[r]^{\lambda^u} & S_N \ar[r]^{q_u\quad\ } & \text{coker}(\lambda^u) \ar[r] & 0.}
\]
Let $\mu$ be a complex Borel measure on the cokernel of $\lambda^u$, $\mu_1$ the normalized Haar measure on $S^1$, and $\nu$ the pullback of the product measure $\mu \times \mu_1$ under a splitting isomorphism
\[
S_N \simeq \text{coker}(\lambda^u) \times S^1.
\]
Each fiber of the submersion $q_u$ is a translations of the image of $\lambda^u$ in $S_N$,  equipped with the orientation induced from that of $S^1$.

\begin{proposition}\label{GeometricI}
If $u$ is a primitive element of $N$, then
\[
\mathscr{A}(\lambda^u,\nu)=\int_{x \in \text{coker}(\lambda^u)} \big[q_u^{-1}(x)\big] d\mu(x).
\]
In particular, if $\mu$ is the normalized Haar measure on the cokernel of $\lambda^u$, then
\[
\mathscr{A}(\lambda^u)=\int_{x \in \text{coker}(\lambda^u)} \big[q_u^{-1}(x)\big] d\mu(x).
\]
\end{proposition}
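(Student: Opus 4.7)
The plan is to unwind the definitions and apply Fubini. Starting from the definition, for any smooth $1$-form $w$ on $S_N$,
\[
\big\langle \mathscr{A}(\lambda^u,\nu),w\big\rangle = \int_{S^1} (\lambda^u)^* \mathscr{A}(w,\nu) = \int_{S^1}(\lambda^u)^*\!\!\int_{x\in S_N} x^*(w)\,d\nu(x).
\]
Since $\nu$ is a Borel measure on the compact group $S_N$ and $w$ is smooth, we may swap the integrals to obtain
\[
\big\langle \mathscr{A}(\lambda^u,\nu),w\big\rangle = \int_{x\in S_N} \int_{S^1} (x\cdot \lambda^u)^*(w) \,d\nu(x),
\]
where $x\cdot\lambda^u\colon S^1\to S_N$ denotes the composition of $\lambda^u$ with the left translation by $x$.

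Next I would identify the inner integral geometrically. Because $u$ is primitive, $\lambda^u$ is a group embedding of $S^1$ onto the kernel of $q_u$, and therefore each translate $x\cdot\lambda^u$ is a diffeomorphism from $S^1$ onto the coset $q_u^{-1}(q_u(x))$, equipped with the orientation transported from $S^1$. This is exactly the orientation of the fiber stipulated in the setup, so
\[
\int_{S^1}(x\cdot\lambda^u)^*(w) = \big\langle [q_u^{-1}(q_u(x))],w\big\rangle.
\]

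Finally, I would use that $\nu$ is the pullback of the product measure $\mu\times\mu_1$ along the chosen group splitting $S_N \simeq \text{coker}(\lambda^u)\times S^1$. Under this identification $q_u$ is the projection onto the first factor, so the integrand above depends only on the first coordinate of $x$. Fubini then gives
\[
\big\langle \mathscr{A}(\lambda^u,\nu),w\big\rangle = \int_{y\in\text{coker}(\lambda^u)}\int_{S^1} \big\langle[q_u^{-1}(y)],w\big\rangle\, d\mu_1 \, d\mu(y) = \int_{y\in\text{coker}(\lambda^u)}\big\langle[q_u^{-1}(y)],w\big\rangle\, d\mu(y),
\]
since $\mu_1$ is a probability measure. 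This is the pairing of $w$ with the claimed current, and the special case where $\mu$ is the normalized Haar measure on $\text{coker}(\lambda^u)$ then follows since in that case $\nu$ is the normalized Haar measure on $S_N$.

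The only nontrivial step is the Fubini exchange, but this is routine because $x \mapsto x^*(w)$ takes values in the finite-dimensional space of smooth $1$-forms on the compact manifold $S_N$ and is continuous in $x$, so the iterated integrals against the finite complex measures $\nu$ and $d\theta$ on $S^1$ coincide. Orientation conventions are handled uniformly by the fact that left translation on $S_N$ is orientation preserving.
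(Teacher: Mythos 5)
Your proof is correct and follows essentially the same route as the paper's: swap the integrals by Fubini, use the splitting $S_N\simeq\text{coker}(\lambda^u)\times S^1$ so that the $\mu_1$-factor integrates to $1$, and invoke primitivity of $u$ so that $x\cdot\lambda^u$ is an orientation-preserving diffeomorphism onto the fiber $q_u^{-1}(q_u(x))$. The only cosmetic difference is the order of operations — you exchange the $S^1$-integral with the $\nu$-integral before invoking the splitting, whereas the paper splits $\nu=\mu\times\mu_1$ first and then uses the $S^1$-translation invariance of the circle integral to drop the $y$-variable.
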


\begin{proof}
By Fubini's theorem, for any smooth $1$-form $w$ on $S_N\simeq \text{coker}(\lambda^u) \times S^1$,
\begin{eqnarray*}
\big\langle \mathscr{A}(\lambda^u,\nu),w\big\rangle&=& \int_x \int_y \Bigg(  \int_{S^1} (\lambda^u)^*  x^* y^*w  \Bigg) d\mu(x)\ d\mu_1(y)\\
&=& \int_x \Bigg(  \int_{S^1} (\lambda^u)^*  x^* w  \Bigg) d\mu(x) \cdot \int_{y} d\mu_1(y)\\
&=& \int_x \Bigg(   \int_{q_u^{-1}(x)}  w \Bigg) d\mu(x). 
\end{eqnarray*}
This shows the equality between $1$-dimensional currents
\[
\mathscr{A}(\lambda^u,\nu)=\int_{x \in \text{coker}(\lambda^u)} \big[q_u^{-1}(x)\big] d\mu(x).
\]
If $\mu$ is the normalized Haar measure on $\text{coker}(\lambda^u)$, then $\nu$ is the normalized Haar measure on $S_N$, and the second statement follows.
\end{proof}

In other words, when $u$ is a primitive element of $N$, $\mathscr{A}(\lambda^u,\nu)$ is the pullback of the $0$-dimensional current $d\mu$ along the oriented submersion $q_u$.
In general,
\[
\mathscr{A}(\lambda^u,\nu)=m_u  \int_{x \in \text{coker}(\lambda^u)} \big[q_u^{-1}(x)\big] d\mu(x),
\]
where $m_u$ is the nonnegative integer satisfying 
$u=m_u u'$ with $u'$ primitive.

\subsection{}\label{SectionAveraging}
Let us recall a definition of pullback of a current (see \cite[Chapter 1, 2.15]{DemaillyBook1} for details). Consider a submersion 
$\pi: M\rightarrow N$, of complex manifolds $M$ and $N$, with respective complex dimensions $m$ and $n.$ Let $\varphi$ be a differential form of degree $k$ on $X'$, with $L^1_{\text{loc}}$ coefficients, such that the restriction $\pi_{|_{\text{Supp}}\varphi}$ is proper. Then the form
 $$\pi_{*}\varphi:= \int_{z\in F^{-1}(y)}\varphi(z),$$
is in $ \mathcal{D}^{k-2(m-n)}(N).$ Therefore, for a current $\mathscr{T}\in \mathcal{D}_{k-2(n-m)}'(M)$ the \emph{pullback} of $\mathscr{T}$ by $\pi$, $\pi^*\mathscr{T}\in \mathcal{D}_{k}'(N),$ is obtained by
$$
\langle \pi^*\mathscr{T}, \varphi \rangle = \langle \mathscr{T}, \pi_*\varphi \rangle.
$$ 
Note that for an analytic cycle $Z$, $\pi^*[Z]= [\pi^{-1}Z],$ if $\pi$ is a diffeomorphism. 

Now let $\tau$ be a rational polyhedron in $\mathbb{R}^n$. 
Let $\pi_{\text{aff}(\tau)}$ is the submersion associated to $\text{aff}(\tau)$ and $\iota^{\text{aff}(\tau)}$ is the closed embedding in the diagram
\[
\xymatrix{
\text{Log}^{-1}\big(\text{aff}(\tau)\big) \ar[r]^{\qquad \iota^{\text{aff}(\tau)}} \ar[d]_{\pi_{\text{aff}(\tau)}} & (\mathbb{C}^*)^n\\
S_{N(\tau)}.&
}
\]
For $u \in N(\tau)$ and a complex Borel measure $\nu$ on $S_{N(\tau)}$, we define a current on $(\mathbb{C}^*)^n$ by
\[
\mathscr{A}_\tau(u,\nu):=\mathbf{1}_{\text{Log}^{-1}(\tau)}\ \iota^{\text{aff}(\tau)}_*\Big( \pi_{\text{aff}(\tau)}^* \  \mathscr{A}(\lambda^u,\nu) \Big).
\]
In other words, $\mathscr{A}_\tau(u,\nu)$ is the trivial extension of the pullback of the $\nu$-average of $\lambda^u$ along the oriented submersion $\pi_\tau$.
When $\nu$ is the normalized Haar measure on $S_{N(\tau)}$, we write
\[
\mathscr{A}_\tau(u):=\mathscr{A}_\tau(u,\nu).
\]
For any nonzero $u$, the support of $\mathscr{A}_\tau(u,\nu)$ satisfies
\[
\big|\mathscr{A}_\tau(u,\nu)\big|\subseteq \big|\mathscr{A}_\tau(u)\big|=\text{Log}^{-1}(\tau) \subseteq (\mathbb{C}^*)^n.
\]

\begin{proposition}\label{Linearity}
For any $u_1,u_2 \in N(\tau)$,
\[
\mathscr{A}_\tau(u_1+u_2)=\mathscr{A}_\tau(u_1)+\mathscr{A}_\tau(u_2).
\]
\end{proposition}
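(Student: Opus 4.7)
The plan is to reduce the proposition to linearity of the map $u \longmapsto \mathscr{A}(\lambda^u)$ on $S_{N(\tau)}$, and then push that linearity through the pullback and trivial extension used to construct $\mathscr{A}_\tau(u)$.

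First I would observe that, for the normalized Haar measure $\nu$ on $S_{N(\tau)}$, the averaging operator on $1$-forms produces an \emph{invariant} $1$-form: for any $y \in S_{N(\tau)}$,
\[
y^* \mathscr{A}(w) = \int_{x} (xy)^*(w)\, d\nu(x) = \int_{x} x^*(w)\, d\nu(x) = \mathscr{A}(w),
\]
by translation-invariance of the Haar measure. Consequently, by the definition of $\mathscr{A}(\lambda^u)$,
\[
\langle \mathscr{A}(\lambda^u), w\rangle \;=\; \int_{S^1}(\lambda^u)^*\mathscr{A}(w) \;=\; \langle \lambda^u, \mathscr{A}(w)\rangle,
\]
and the right-hand side is $\mathbb{Z}$-linear in $u$, because the text records that $\langle \lambda^{u_1+u_2}, w'\rangle = \langle \lambda^{u_1}, w'\rangle + \langle \lambda^{u_2}, w'\rangle$ whenever $w'$ is an invariant $1$-form. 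Since this holds for every test form $w$, we obtain $\mathscr{A}(\lambda^{u_1+u_2}) = \mathscr{A}(\lambda^{u_1}) + \mathscr{A}(\lambda^{u_2})$ as $1$-dimensional currents on $S_{N(\tau)}$.

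Second, by definition
\[
\mathscr{A}_\tau(u) = \mathbf{1}_{\textrm{Log}^{-1}(\tau)} \iota^{\textrm{aff}(\tau)}_* \bigl( \pi_{\textrm{aff}(\tau)}^* \mathscr{A}(\lambda^u) \bigr),
\]
and the three operations involved (pullback along the submersion $\pi_{\textrm{aff}(\tau)}$, pushforward along the closed embedding $\iota^{\textrm{aff}(\tau)}$, and multiplication by the characteristic function $\mathbf{1}_{\textrm{Log}^{-1}(\tau)}$) are each $\mathbb{C}$-linear on currents. Applying them to the identity of the previous step yields the desired equality $\mathscr{A}_\tau(u_1+u_2) = \mathscr{A}_\tau(u_1) + \mathscr{A}_\tau(u_2)$.

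The main conceptual obstacle — and essentially the only one — is that the assignment $u \longmapsto \lambda^u$ is itself \emph{not} additive at the level of currents on $S_{N(\tau)}$, as the paper explicitly warns before Proposition \ref{PropositionFourier}. The averaging operator repairs this failure: it projects away all Fourier modes $\chi^m$ with $m \neq 0$, leaving only pairings against invariant $1$-forms, on which $u \longmapsto \lambda^u$ is genuinely linear. If one preferred a mode-by-mode verification, the same conclusion can be drawn directly from Proposition \ref{PropositionFourier}(3), where the factor $\delta_{\langle u,m\rangle}\,\hat{\nu}(m)$ collapses to $\delta_m$ for the Haar measure, again isolating the invariant part.
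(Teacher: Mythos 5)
Your proof is correct and establishes the proposition, but your primary argument takes a genuinely different route from the paper's. The paper verifies the additivity $\mathscr{A}(\lambda^{u_1+u_2})=\mathscr{A}(\lambda^{u_1})+\mathscr{A}(\lambda^{u_2})$ mode-by-mode: it tests both sides against forms $\chi^m w$ with $w$ invariant, uses Proposition \ref{PropositionFourier}(3) together with the fact that the Haar measure has $\hat\nu(m)=\delta_{m,0}$ to see that all nonzero modes vanish, and then invokes the Stone--Weierstrass theorem to conclude that agreement on this dense set of test forms implies equality of currents. You instead observe directly that for the normalized Haar measure the averaged form $\mathscr{A}(w)$ is always invariant (by translation-invariance of $\nu$), so that $\langle\mathscr{A}(\lambda^u),w\rangle = \langle\lambda^u,\mathscr{A}(w)\rangle$ is additive in $u$ for \emph{every} smooth $1$-form $w$, giving the equality of currents with no density argument at all. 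Your version is slightly more direct and bypasses Stone--Weierstrass, at the modest cost of a small verification that pullback by $y$ commutes with the Haar integral; both arguments ultimately rest on the same recorded fact that $u\mapsto\langle\lambda^u,w'\rangle$ is additive when $w'$ is invariant. Your closing remark correctly identifies the paper's Fourier-mode route as an alternative, so you have effectively produced both proofs. The final reduction to $\mathscr{A}_\tau$ via linearity of $\pi_{\text{aff}(\tau)}^*$, $\iota^{\text{aff}(\tau)}_*$, and restriction matches the paper's first sentence.
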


\begin{proof}
Since  $\pi_\tau^*$ is linear, it is enough to check that $\mathscr{A}$ is linear.
Fourier coefficients of the normalized Haar measure $\nu$ on $S_{N(\tau)}$ are
\[
\hat \nu(m)=\begin{cases} 1& \text{if $m=0$,}\\ 0& \text{if $m \neq 0$.}\end{cases}
\]
Therefore, by Proposition \ref{PropositionFourier},  for any character $\chi^m$ and invariant $1$-form $w$ on $S_{N(\tau)}$, 
\[
\big\langle \mathscr{A}(\lambda^{u_1+u_2}),\chi^mw \big\rangle = \begin{cases} \langle \lambda^{u_1},w\rangle+\langle \lambda^{u_2},w\rangle& \text{if $m=0$,}\\ 0& \text{if $m \neq 0$,}\end{cases}
\]
and
\[
\big\langle \mathscr{A}(\lambda^{u_1}),\chi^mw \big\rangle+\big\langle \mathscr{A}(\lambda^{u_2}),\chi^mw \big\rangle = \begin{cases} \langle \lambda^{u_1},w\rangle+\langle \lambda^{u_2},w\rangle& \text{if $m=0$,}\\ 0& \text{if $m \neq 0$.}\end{cases}
\]
The Stone-Weierstrass theorem shows that any smooth $1$-form on $S_{N(\tau)}$ can be uniformly approximated by linear combinations of $1$-forms of the form $\chi^m w$ with $w$ invariant, and hence the above implies 
\[
\mathscr{A}(\lambda^{u_1+u_2})=\mathscr{A}(\lambda^{u_1})+\mathscr{A}(\lambda^{u_2}).
\]
\end{proof}

We note that the linear operators $\mathscr{A}_\tau$ and $\mathscr{A}$ are injective: By Proposition \ref{PropositionFourier}, for any element $m$ in the dual group  $M(\tau):=N(\tau)^\vee$,
\[
\big\langle \mathscr{A}(\lambda^u),w(m) \big\rangle=\langle \lambda^u,w(m)\rangle=\langle u,m\rangle.
\]
It follows that  $\mathscr{A}_\tau(u)=0$ if and only if $\mathscr{A}(\lambda^u)=0$ if and only if  $u=0$.

\subsection{}

Let $\tau$ be a codimension $1$ face of a $p$-dimensional rational polyhedron $\sigma$ in $\mathbb{R}^n$. 
Corresponding to each point $x \in S_{N(\sigma)}$, there is a commutative diagram of maps between smooth manifolds
\[
\xymatrix{
&&l_{\sigma,x}^{-1}(\sigma^\circ)\ar[d]&\\
&l_{\sigma,x}^{-1}(\tau^\circ) \ar[d] \ar[r]&\pi_{\text{aff}(\sigma)}^{-1}(x) \ar[d] \ar[dr]^{ l_{\sigma,x}}&&\\
&\text{Log}^{-1}(\tau^\circ) \ar[d]^{\pi_\tau}  \ar[r]& \text{Log}^{-1}\big(\text{aff}(\sigma)\big) \ar[d]^{\pi_{\text{aff}(\sigma)}} \ar[r]_{\qquad l_{\text{aff}(\sigma)}}& \text{aff}(\sigma)\\
S^1 \ar[r]^{\lambda^{u_{\sigma/\tau}}}&S_{N(\tau)} \ar[r]^{q_{\sigma/\tau}}& S_{N(\sigma)}. &
}
\]
The maps $\pi_\tau$, $\pi_{\text{aff}(\sigma)}$ are submersions with oriented fibers, the maps $l_{\sigma,x}$, $l_{\text{aff}(\sigma)}$ are restrictions of the logarithm map, and
all unlabeled maps are inclusions between subsets of $(\mathbb{C}^*)^n$. 
The dimensions of the above manifolds are depicted in the following diagram:
\[
\tiny
\xymatrix{
&&2p \ar[d]&\\
&2p-1 \ar[d] \ar[r]&2p \ar[d] \ar[dr]&&\\
&n+p-1 \ar[d]  \ar[r]& n+p \ar[d] \ar[r]& p\\
1 \ar[r] &n-p+1 \ar[r]& n-p. &
}
\]
The bottom row is a split exact sequence of Lie groups, and there is a canonical isomorphism
\[
S_{N(\sigma)} \simeq \text{coker}(\lambda^{u_{\sigma/\tau}}).
\]
Each fiber of the submersion $q_{\sigma/\tau}$ has the orientation induced from that of $S^1$.

\begin{lemma}\label{OrientationLemma}
We have the following equality between currents on $\text{Log}^{-1}(\tau^\circ)$:
\[
\big[l_{\sigma,x}^{-1}(\tau^\circ)\big] = \pi_\tau^{*} \big[q_{\sigma/\tau}^{-1}(x)\big].
\]
\end{lemma}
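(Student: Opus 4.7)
The plan is to establish the equality at three levels: set-theoretically, as smooth objects after stripping orientations, and finally matching orientations. First I would observe that both sides are supported on the same subset of $\text{Log}^{-1}(\tau^\circ)$, namely
\[
l_{\sigma,x}^{-1}(\tau^\circ) = \pi_\tau^{-1}\bigl(q_{\sigma/\tau}^{-1}(x)\bigr).
\]
This follows immediately from commutativity of the large diagram: since $q_{\sigma/\tau} \circ \pi_\tau$ agrees with $\pi_{\text{aff}(\sigma)}$ on $\text{Log}^{-1}(\tau^\circ)$, a point $z \in \text{Log}^{-1}(\tau^\circ)$ satisfies $\pi_\tau(z) \in q_{\sigma/\tau}^{-1}(x)$ if and only if $\pi_{\text{aff}(\sigma)}(z) = x$, i.e., $z \in \pi_{\text{aff}(\sigma)}^{-1}(x)$; the intersection $\pi_{\text{aff}(\sigma)}^{-1}(x) \cap \text{Log}^{-1}(\tau^\circ)$ is by definition $l_{\sigma,x}^{-1}(\tau^\circ)$.

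Next I would invoke the standard pullback formula for integration currents along an oriented submersion. The map $\pi_\tau$ is a submersion whose fibers are open subsets of the complex submanifolds $\pi_{\text{aff}(\tau)}^{-1}(\cdot) \subset (\mathbb{C}^*)^n$ and hence carry canonical complex orientations. For any oriented submanifold $Z \subseteq S_{N(\tau)}$ the pullback $\pi_\tau^*[Z]$ is a well-defined current on $\text{Log}^{-1}(\tau^\circ)$ which, via local trivializations $\pi_\tau^{-1}(U) \cong U \times F$ and Fubini, equals the integration current along $\pi_\tau^{-1}(Z)$ endowed with the product (fiber-bundle) orientation: the orientation of $Z$ in the base direction, the complex orientation in the fiber direction. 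Applied to $Z = q_{\sigma/\tau}^{-1}(x)$, which carries the $S^1$-orientation induced from the splitting $S_{N(\tau)} \simeq S^1 \times S_{N(\sigma)}$, this identifies $\pi_\tau^*[q_{\sigma/\tau}^{-1}(x)]$ with the integration current along $l_{\sigma,x}^{-1}(\tau^\circ)$ carrying this specific product orientation.

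What remains, and where I expect the main work to lie, is verifying that the resulting product orientation on $l_{\sigma,x}^{-1}(\tau^\circ)$ coincides with the \emph{boundary} orientation induced from the complex structure on $l_{\sigma,x}^{-1}(\sigma^\circ)$, as specified in the paragraph preceding Proposition \ref{BoundaryPolyhedron}. For this I would subdivide so that $\sigma$ is a unimodular cone with primitive generators $u_1,\ldots,u_p$, where $u_1 = u_{\sigma/\tau}$ and $\tau$ is spanned by $u_2,\ldots,u_p$. The monomial diffeomorphism $z \mapsto \tilde x \cdot z^{[u_1,\ldots,u_p]}$ recorded in the excerpt produces compatible identifications
\[
l_{\sigma,x}^{-1}(\sigma^\circ) \simeq \mathbb{D}^* \times (\mathbb{D}^*)^{p-1}, \qquad l_{\sigma,x}^{-1}(\tau^\circ) \simeq S^1 \times (\mathbb{D}^*)^{p-1},
\]
under which the $\lambda^{u_1}$-orbit in $S_{N(\tau)}$ corresponds to the $S^1$-factor. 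The boundary orientation on the right-hand side is the $S^1$-orientation coming from $S^1 = \partial \mathbb{D}^*$ (the paper's convention) paired with the complex orientation on the remaining factor, and the pullback orientation from Step~2 is manifestly the same. This confirms the signed equality and completes the proof. The orientation bookkeeping is the only subtle point; once the unimodular local model is in place it reduces to the single sign convention $S^1 = \partial \mathbb{D}^*$ fixed at the outset.
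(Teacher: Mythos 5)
Your argument is correct and follows essentially the same route as the paper: establish the set-theoretic equality from the cartesian square, observe that both sides are the same submanifold with two a priori different orientations (boundary orientation on the left, fiber-pullback orientation on the right), and then compare orientations in an explicit local coordinate model where both become transparent. Your Step 2 (the pullback formula for integration currents along an oriented submersion) is left implicit in the paper but is exactly the tool being used.

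The one place you should tighten the wording is in Step 3. You say you would ``subdivide so that $\sigma$ is a unimodular cone,'' but $\sigma$ is a general rational polyhedron, not necessarily a cone, and no subdivision of $\sigma$ will turn it into a cone. What is actually happening --- and what the paper's proof does --- is a \emph{local} reduction: since orientation agreement is a local statement, one works near a single point of $l_{\sigma,x}^{-1}(\tau^\circ)$, replaces $\tau$ by $\mathrm{aff}(\tau)$, translates that point to the identity, and then applies a monomial change of coordinates to reach the normal form $\tau=\mathrm{span}(e_2,\ldots,e_p)$, $\sigma=\mathrm{cone}(e_1)+\tau$. That local linear model is the thing you want, and it is genuinely local, not a global subdivision of $\sigma$. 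Once you replace ``subdivide'' with this localization, the rest of your computation with the $\mathbb{D}^*\times(\mathbb{D}^*)^{p-1}$ and $S^1\times(\mathbb{D}^*)^{p-1}$ identifications and the single sign convention $S^1=\partial\mathbb{D}^*$ matches the paper's proof exactly.
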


\begin{proof}
By construction, the top square in the diagram is cartesian:
\[
l_{\sigma,x}^{-1}(\tau^\circ) =\text{Log}^{-1}(\tau^\circ) \cap \pi_{\text{aff}(\sigma)}^{-1}(x).
\]
This equality, together with the commutativity of the two squares, shows that
\[
l_{\sigma,x}^{-1}(\tau^\circ) = \pi_\tau^{-1} \big(q_{\sigma/\tau}^{-1}(x)\big).
\]
The left-hand side is oriented as a boundary of the complex manifold $l_{\sigma,x}^{-1}(\sigma^\circ)$, and the circle
$q_{\sigma/\tau}^{-1}(x)$ is oriented as a translate of the one-parameter subgroup $\lambda^{u_{\sigma,\tau}}$. 
The canonical orientation on fibers of $\pi_\tau$ gives the orientation on the right-hand side.

We show that the two orientations agree. We do this explicitly after three reduction steps:
\begin{enumerate}[(1)]
\item It is enough to show this locally around any one point in $l_{\sigma,x}^{-1}(\tau^\circ)$. Therefore, we may assume that $\tau=\text{aff}(\tau)$.
\item By translation, 
we may assume that the chosen point is the identity element of the ambient torus.
\item By monomial change of coordinates,  we may assume that
\[
\tau=\text{span}(e_2,\ldots,e_p), \qquad \sigma=\text{cone}(e_1)+\tau.
\]
\end{enumerate}
Here $e_1,\ldots,e_n$ is the standard basis of $\mathbb{Z}^n$.
Recall that the punctured unit disc $\mathbb{D}^*$ maps to the positive real line $\mathbb{R}_{>0}$ under the logarithm map.
Under the above assumptions, the diagram reads
\[
\tiny
\xymatrix{
&&\mathbb{D}^* \times (\mathbb{C}^*)^{p-1} \times \{1\} \ar[d]&\\
&S^1 \times (\mathbb{C}^*)^{p-1} \times \{1\}\ar[d] \ar[r]& \mathbb{C}^* \times (\mathbb{C}^*)^{p-1} \times \{1\} \ar[d] \ar[dr]&&\\
&S^1 \times (\mathbb{C}^*)^{p-1} \times (S^1)^{n-p} \ar[d]  \ar[r]&  \mathbb{C}^* \times (\mathbb{C}^*)^{p-1} \times (S^1)^{n-p} \ar[d] \ar[r]& \mathbb{R} \times \mathbb{R}^{p-1} \times \{0\}\\
S^1 \times \{1\} \times \{1\} \ar[r] & S^1 \times \{1\} \times (S^1)^{n-p} \ar[r]& \{1\} \times \{1\}\times (S^1)^{n-p}. &
}
\]
From this diagram we see that the orientation on $l_{\sigma,x}^{-1}(\tau^\circ)$ as a boundary of $l_{\sigma,x}^{-1}(\sigma^\circ)$ agrees with the product of the orientation on $S^1$ and the canonical orientation on fibers of $\pi_\tau$.
\end{proof}

It follows that there is an equality between the trivial extensions to $(\mathbb{C}^*)^n$
\[
\big[l_{\sigma,x}^{-1}(\tau)\big]=\big[\overline{\pi_\tau^{-1} (q_{\sigma/\tau}^{-1}(x))}\big].
\]

\subsection{}

Let $\sigma$ be a $p$-dimensional rational polyhedron in $\mathbb{R}^n$, and let $\mu_\sigma$ be a complex Borel measure on $S_{N(\sigma)}$. For each codimension $1$ face $\tau$ of $\sigma$, consider the split exact sequence
\[
\xymatrix{
0 \ar[r] & S^1 \ar[r]^{\lambda^{u_{\sigma/\tau}}} & S_{N(\tau)} \ar[r]^{q_{\sigma/\tau}} & S_{N(\sigma)} \ar[r] & 0.
}
\]
Let $\nu_{\sigma/\tau}$ be the pullback of the product measure $\mu_\sigma \times \mu_1$ under a splitting isomorphism
\[
S_{N(\tau)} \simeq S_{N(\sigma)}\times S^1.
\]

\begin{proposition}\label{BoundaryComputation}
We have
\[
d\mathscr{T}_\sigma(\mu_\sigma)=-\sum_{\tau \subset \sigma}\mathscr{A}_\tau(u_{\sigma/\tau},\nu_{\sigma/\tau}),
\]
where the sum is over all codimension $1$ faces $\tau$ of $\sigma$.
In particular,
\[
d\mathscr{T}_\sigma=-\sum_{\tau \subset \sigma}\mathscr{A}_\tau(u_{\sigma/\tau})
\]
\end{proposition}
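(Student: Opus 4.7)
The plan is to chain together the boundary formula of Proposition \ref{BoundaryPolyhedron}, the geometric identification of the averaging operator given by Proposition \ref{GeometricI}, and Lemma \ref{OrientationLemma}, which relates fibers of $l_{\sigma,x}$ over codimension $1$ faces to fibers of $q_{\sigma/\tau}$ pulled back along $\pi_\tau$. The result should then drop out essentially by interchanging the pullback $\pi_\tau^*$ with the integral over $S_{N(\sigma)}$.

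First I would start from Proposition \ref{BoundaryPolyhedron}:
\[
d\mathscr{T}_\sigma(\mu_\sigma)=-\sum_{\tau \subset \sigma} \int_{x \in S_{N(\sigma)}}    \big[l_{\sigma,x}^{-1}(\tau)\big] d\mu_\sigma(x),
\]
so it suffices to fix a codimension $1$ face $\tau$ of $\sigma$ and identify the corresponding summand with $\mathscr{A}_\tau(u_{\sigma/\tau},\nu_{\sigma/\tau})$. By Lemma \ref{OrientationLemma}, after taking trivial extensions to $(\mathbb{C}^*)^n$,
\[
\big[l_{\sigma,x}^{-1}(\tau)\big]=\mathbf{1}_{\text{Log}^{-1}(\tau)}\ \iota^{\text{aff}(\tau)}_*\Bigl(\pi_\tau^{*}\big[q_{\sigma/\tau}^{-1}(x)\big]\Bigr),
\]
where we have used that $\pi_\tau$ is the restriction of $\pi_{\text{aff}(\tau)}$ and that the closure of the fiber of a submersion is the fiber of the closure.

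Next I would integrate both sides against $d\mu_\sigma(x)$, pulling the integral past the linear operations $\pi_\tau^{*}$, $\iota^{\text{aff}(\tau)}_*$, and $\mathbf{1}_{\text{Log}^{-1}(\tau)}$. This is legitimate because $\mu_\sigma$ is a complex Borel measure on the compact torus $S_{N(\sigma)}$, and these operations are continuous in the weak topology of currents. This yields
\[
\int_{x \in S_{N(\sigma)}} \big[l_{\sigma,x}^{-1}(\tau)\big] d\mu_\sigma(x)=\mathbf{1}_{\text{Log}^{-1}(\tau)}\ \iota^{\text{aff}(\tau)}_*\Biggl(\pi_\tau^{*}\int_{x \in S_{N(\sigma)}}\big[q_{\sigma/\tau}^{-1}(x)\big] d\mu_\sigma(x)\Biggr).
\]
Now invoke Proposition \ref{GeometricI} applied to the primitive element $u_{\sigma/\tau}\in N(\tau)$, together with the canonical isomorphism $S_{N(\sigma)}\simeq\text{coker}(\lambda^{u_{\sigma/\tau}})$ and the definition of $\nu_{\sigma/\tau}$ as the pullback of $\mu_\sigma\times\mu_1$ along a splitting. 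This identifies
\[
\int_{x \in S_{N(\sigma)}}\big[q_{\sigma/\tau}^{-1}(x)\big] d\mu_\sigma(x)=\mathscr{A}(\lambda^{u_{\sigma/\tau}},\nu_{\sigma/\tau}).
\]
Plugging this in, the right-hand side matches the definition of $\mathscr{A}_\tau(u_{\sigma/\tau},\nu_{\sigma/\tau})$, and summing over $\tau$ gives the stated formula. The ``in particular'' case follows by taking $\mu_\sigma$ to be the normalized Haar measure, in which case the induced $\nu_{\sigma/\tau}$ is also the normalized Haar measure on $S_{N(\tau)}$.

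The main obstacle I anticipate is bookkeeping rather than conceptual: justifying that the chain of operators $\mathbf{1}_{\text{Log}^{-1}(\tau)}\iota^{\text{aff}(\tau)}_*\pi_\tau^{*}$ genuinely commutes with integration against $\mu_\sigma$, and checking that the orientations on the fibers $q_{\sigma/\tau}^{-1}(x)$ and on $l_{\sigma,x}^{-1}(\tau^\circ)$ agree as induced from Lemma \ref{OrientationLemma} (so no spurious sign appears). Both are routine but need a careful word, since the sign convention on $d$ and the convention that fibers of $\pi_\tau$ are oriented as translates of the one-parameter subgroup $\lambda^{u_{\sigma/\tau}}$ must be aligned with the boundary orientation used in Proposition \ref{BoundaryPolyhedron}.
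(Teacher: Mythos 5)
Your proof is correct and follows essentially the same route as the paper: start from Proposition \ref{BoundaryPolyhedron}, convert $\big[l_{\sigma,x}^{-1}(\tau)\big]$ via Lemma \ref{OrientationLemma}, exchange the integral over $S_{N(\sigma)}$ with the pullback/pushforward/cut-off operators, and recognize the result as $\mathscr{A}_\tau(u_{\sigma/\tau},\nu_{\sigma/\tau})$ via Proposition \ref{GeometricI} and the definition of $\nu_{\sigma/\tau}$. The only quibble is notational: in your display after ``By Lemma \ref{OrientationLemma}'' you should really write $\pi_{\text{aff}(\tau)}^{*}$ rather than $\pi_\tau^{*}$ so that multiplying by $\mathbf{1}_{\text{Log}^{-1}(\tau)}$ is meaningful and the expression matches the paper's definition of $\mathscr{A}_\tau$ verbatim, but you flag the issue yourself and it does not affect the argument.
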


\begin{proof}
We start from the geometric representation of the boundary in Proposition \ref{BoundaryPolyhedron}. We have
\[
d\mathscr{T}_\sigma(\mu_\sigma)= -\sum_{\tau \subset \sigma} \Bigg(\int_{x \in S_{N(\sigma)}}    \big[l_{\sigma,x}^{-1}(\tau)\big] d\mu_\sigma(x) \Bigg).
\]
Lemma \ref{OrientationLemma} and Proposition \ref{GeometricI} together give
\[
d\mathscr{T}_\sigma(\mu_\sigma)=-\sum_{\tau \subset \sigma} \Bigg( \int_{x \in S_{N(\sigma)}} \big[\overline{\pi_\tau^{-1} (q_{\sigma/\tau}^{-1}(x))}\big] d\mu_{\sigma}(x) \Bigg)=-\sum_{\tau \subset \sigma}\mathscr{A}_\tau(u_{\sigma/\tau},\nu_{\sigma/\tau}).
\]
If $\mu_\sigma$ is the normalized Haar measure on $S_{N(\sigma)}$, then $\nu_{\sigma/\tau}$ is the normalized Haar measure on $S_{N(\tau)}$ for all $\tau \subset \sigma$, and the second statement follows.

\end{proof}

Let $\sigma$ and $\tau$ be as above, and consider the dual exact sequences
\[
\xymatrix{
0 \ar[r]& \mathbb{Z} \ar[r]^{u_{\sigma/\tau}\quad  } & N(\tau) \ar[r]& N(\sigma) \ar[r] & 0
}
\]
and
\[
\xymatrix{
0 \ar[r]& M(\sigma) \ar[r] & M(\tau) \ar[r]^{u_{\sigma/\tau}^\vee}& \mathbb{Z}^\vee \ar[r] & 0.
}
\]
The latter exact sequence shows that an element $m$ of $M(\tau)$ is in $M(\sigma)$ 
 if and only if 
 \[
 \langle u_{\sigma/\tau},m\rangle= 0.
 \]
When $m$ satisfies this condition,  the $m$-th Fourier coefficients of both $\nu_{\sigma/\tau}$ and $\mu_\sigma$ are defined.

\begin{proposition}\label{FourierCoefficient}
If an element $m$ of $M(\tau)$ is in $M(\sigma)$, then $\hat \nu_{\sigma/\tau}(m) = \hat \mu_\sigma(m)$.
\end{proposition}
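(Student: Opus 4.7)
The plan is to exploit the defining property $m \in M(\sigma)$, namely $\langle u_{\sigma/\tau}, m \rangle = 0$, to show that the character $\chi^m$ on $S_{N(\tau)}$ descends to the quotient $S_{N(\sigma)}$, after which the Fourier coefficient computation reduces to a pushforward identity.

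First I would unpack what $m \in M(\sigma) \subseteq M(\tau)$ means in terms of characters. From the dual exact sequence displayed just before the proposition, an element $m \in M(\tau)$ lies in $M(\sigma)$ iff $\langle u_{\sigma/\tau}, m\rangle = 0$. By the definition of characters given at the start of Section \ref{SectionFourier}, the restriction of $\chi^m : S_{N(\tau)} \to S^1$ along the one-parameter subgroup $\lambda^{u_{\sigma/\tau}}$ is the map $z \mapsto z^{\langle u_{\sigma/\tau}, m\rangle}$, which is trivial exactly when $m \in M(\sigma)$. Hence $\chi^m$ is trivial on $\ker(q_{\sigma/\tau}) = \lambda^{u_{\sigma/\tau}}(S^1)$, and so factors uniquely through the quotient, yielding a character $\bar\chi^m : S_{N(\sigma)} \to S^1$ with
\[
\chi^m = \bar\chi^m \circ q_{\sigma/\tau}.
\]
Moreover, $\bar\chi^m$ is precisely the character on $S_{N(\sigma)}$ corresponding to $m$ regarded as an element of $M(\sigma)$, so $\int_{S_{N(\sigma)}} \bar\chi^m \, d\mu_\sigma = \hat\mu_\sigma(m)$.

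Next I would compute $\hat\nu_{\sigma/\tau}(m)$ by change of variables through the chosen splitting $S_{N(\tau)} \simeq S_{N(\sigma)} \times S^1$, under which $\nu_{\sigma/\tau}$ corresponds to $\mu_\sigma \times \mu_1$ and $q_{\sigma/\tau}$ corresponds to the projection onto the first factor. Using the factorization $\chi^m = \bar\chi^m \circ q_{\sigma/\tau}$ and Fubini's theorem,
\[
\hat\nu_{\sigma/\tau}(m) = \int_{S_{N(\tau)}} \chi^m \, d\nu_{\sigma/\tau} = \int_{S_{N(\sigma)} \times S^1} \bar\chi^m(y) \, d\mu_\sigma(y)\, d\mu_1(z) = \hat\mu_\sigma(m) \cdot \int_{S^1} d\mu_1 = \hat\mu_\sigma(m),
\]
where the last equality uses that $\mu_1$ is the normalized Haar measure on $S^1$. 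Equivalently, one can phrase the middle step as the pushforward identity $(q_{\sigma/\tau})_* \nu_{\sigma/\tau} = \mu_\sigma$, which is immediate from the construction of $\nu_{\sigma/\tau}$.

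There is no real obstacle here; the only subtlety worth flagging is verifying that the identification of $\bar\chi^m$ with the character of $S_{N(\sigma)}$ associated to $m \in M(\sigma)$ is compatible with the inclusion $M(\sigma) \hookrightarrow M(\tau)$ described by the dual exact sequence. This is a straightforward diagram chase: the inclusion is by construction dual to the surjection $N(\tau) \to N(\sigma)$ that defines $q_{\sigma/\tau}$ on $S^1$-points, so the characters agree on the nose. Once this identification is in hand, the proof is a one-line application of Fubini, and the statement follows.
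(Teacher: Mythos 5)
Your proof is correct and follows essentially the same route as the paper: the key step in both is that $m \in M(\sigma)$ implies $\chi^m$ is constant along fibers of $q_{\sigma/\tau}$ (equivalently, factors through it), after which Fubini via the splitting $S_{N(\tau)} \simeq S_{N(\sigma)} \times S^1$ gives $\hat\nu_{\sigma/\tau}(m) = \hat\mu_\sigma(m) \cdot \int_{S^1} d\mu_1 = \hat\mu_\sigma(m)$. You simply spell out more explicitly why the character descends and why the descended character is the one associated to $m$ viewed in $M(\sigma)$, which the paper leaves implicit.
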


\begin{proof}
Since $m \in M(\sigma)$, the character $\chi^m$ is constant along each fiber of $q_{\sigma/\tau}$. Therefore, by Fubini's theorem,
\[
\hat\nu_{\sigma/\tau}(m) =  \int_x  \chi^m \ d\mu(x) \cdot \int_{y} d\mu_1(y)= \hat \mu_{\sigma}(m).
\]
\end{proof}

The following formula for the boundary of $\mathscr{T}_\mathscr{C}$ directly implies Theorem \ref{Main1}.

\begin{theorem}\label{Main1General}
For any $p$-dimensional weighted complex $\mathscr{C}$ in $\mathbb{R}^n$,
\[
d \mathscr{T}_\mathscr{C}=-\sum_\tau  \mathscr{A}_\tau\Big(\sum_{ \tau \subset \sigma} \text{w}_\mathscr{C}(\sigma)u_{\sigma/\tau}\Big),
\]
where the second sum is over all $p$-dimensional cells $\sigma$ containing $\tau$.
\end{theorem}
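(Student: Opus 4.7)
The plan is to derive the formula by combining the linearity of $d$, the per-cell boundary computation in Proposition \ref{BoundaryComputation}, and the linearity of the averaging operators $\mathscr{A}_\tau$ from Proposition \ref{Linearity}. Since the identity is between currents on $(\mathbb{C}^*)^n$, it suffices to check the pairing against an arbitrary compactly supported test form $\varphi \in \mathscr{D}^{2p-1}\big((\mathbb{C}^*)^n\big)$, which localizes all the sums.

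\textbf{Step 1: Expand by definition.} By the definition of $\mathscr{T}_\mathscr{C}$ and the linearity of the exterior derivative on currents,
\[
d\mathscr{T}_\mathscr{C} = \sum_{\sigma} \mathrm{w}_\mathscr{C}(\sigma)\, d\mathscr{T}_\sigma,
\]
the sum ranging over $p$-dimensional cells $\sigma$ of $\mathscr{C}$. The local finiteness of $\mathscr{C}$ guarantees that when paired against $\varphi$ only finitely many terms contribute, so this sum is well-defined.

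\textbf{Step 2: Substitute Proposition \ref{BoundaryComputation}.} Applying the formula $d\mathscr{T}_\sigma = -\sum_{\tau \subset \sigma} \mathscr{A}_\tau(u_{\sigma/\tau})$ for each $\sigma$ yields
\[
d\mathscr{T}_\mathscr{C} = -\sum_{\sigma}\, \mathrm{w}_\mathscr{C}(\sigma) \sum_{\tau\subset \sigma} \mathscr{A}_\tau(u_{\sigma/\tau}).
\]
Here the inner sum runs over codimension $1$ faces of $\sigma$.

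\textbf{Step 3: Swap the order of summation.} Regrouping the double sum by codimension $1$ cells $\tau$ of $\mathscr{C}$ gives
\[
d\mathscr{T}_\mathscr{C} = -\sum_{\tau} \sum_{\sigma \supset \tau} \mathrm{w}_\mathscr{C}(\sigma)\, \mathscr{A}_\tau(u_{\sigma/\tau}),
\]
where the inner sum is over $p$-dimensional cells $\sigma$ containing $\tau$. The swap is legitimate because each pair $(\sigma,\tau)$ with $\tau \subset \sigma$ appears exactly once on both sides and, when pairing against a compactly supported $\varphi$, only finitely many $(\sigma,\tau)$ contribute.

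\textbf{Step 4: Pull weights inside $\mathscr{A}_\tau$.} Proposition \ref{Linearity} shows that $\mathscr{A}_\tau$ is additive on $N(\tau)$; by the defining integral formula $\mathscr{A}_\tau(u,\nu) = \mathbf{1}_{\mathrm{Log}^{-1}(\tau)}\,\iota_*^{\mathrm{aff}(\tau)}\pi_{\mathrm{aff}(\tau)}^*\mathscr{A}(\lambda^u,\nu)$, it extends $\mathbb{C}$-linearly to $N(\tau)_\mathbb{C}$ (as already used implicitly in the statement that $\mathscr{A}_\tau$ is an injective linear map). Therefore
\[
\sum_{\sigma \supset \tau} \mathrm{w}_\mathscr{C}(\sigma)\, \mathscr{A}_\tau(u_{\sigma/\tau}) = \mathscr{A}_\tau\Big(\sum_{\sigma\supset\tau} \mathrm{w}_\mathscr{C}(\sigma)\, u_{\sigma/\tau}\Big),
\]
and substituting this into Step 3 yields the stated formula.

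The proof is essentially a bookkeeping assembly of ingredients already at hand, so there is no deep obstacle. The main point requiring care is the $\mathbb{C}$-linear extension in Step 4: Proposition \ref{Linearity} is stated only for sums of lattice elements, and one must either verify this extension directly from the Fourier-analytic proof given there (noting that $\mathscr{A}$ commutes with complex scalars because $\lambda^u \mapsto \mathscr{A}(\lambda^u)$ factors through the linear functional $m \mapsto \langle u,m\rangle$ on $M(\tau)$, as seen in the argument preceding Section 3.3) or check it by hand using the integral definition. Once this linearity is in place, the formula follows, and Theorem \ref{Main1} is an immediate corollary via the injectivity of $\mathscr{A}_\tau$: vanishing of $d\mathscr{T}_\mathscr{C}$ is equivalent to vanishing of each inner sum, i.e.\ to the balancing condition at every codimension $1$ cell.
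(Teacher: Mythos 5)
Your proof is correct and follows the same route as the paper: expand $d\mathscr{T}_\mathscr{C}$ cell by cell, apply Proposition \ref{BoundaryComputation}, swap the order of summation, and use Proposition \ref{Linearity} to pull the weights inside $\mathscr{A}_\tau$. Your care about the $\mathbb{C}$-linear extension of $\mathscr{A}_\tau$ to $N(\tau)_\mathbb{C}$ is a reasonable observation the paper leaves implicit, but it does not change the argument.
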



\begin{proof}
By Proposition \ref{BoundaryComputation}, we have
\[
d\mathscr{T}_\mathscr{C}=-\sum_\sigma \sum_{\tau \subset \sigma} \text{w}_\mathscr{C}(\sigma) \mathscr{A}_\tau(u_{\sigma/\tau}).
\]
Changing the order of summation and applying Proposition \ref{Linearity} gives
\[
d\mathscr{T}_\mathscr{C}=-\sum_\tau  \mathscr{A}_\tau\Big(\sum_{ \tau \subset \sigma} \text{w}_\mathscr{C}(\sigma)u_{\sigma/\tau}\Big).
\]
\end{proof}

\subsection{}

Let $\mathscr{P}$ be a $p$-dimensional locally finite rational polyhedral complex in $\mathbb{R}^n$. 
We choose a complex Borel measure $\mu_\sigma$ on $S_{N(\sigma)}$ for each $p$-dimensional cell $\sigma$ of $\mathscr{P}$, and define a current
\[
\mathscr{T}:=\sum_\sigma \mathscr{T}_\sigma(\mu_\sigma),
\]
where the sum is over all $p$-dimensional cells $\sigma$ in $\mathscr{P}$. The support of $\mathscr{T}$ satisfies
\[
|\mathscr{T}| \subseteq \text{Log}^{-1}|\mathscr{P}|.
\]
In fact, any $(p,p)$-dimensional closed current with measure coefficients and support in $\text{Log}^{-1}|\mathscr{P}|$ is equal to $\mathscr{T}$ for some choices of complex Borel measures $\mu_\sigma$, see Lemma \ref{SupportLemma}.
For each $\sigma$ and its codimension $1$ face $\tau$, there are inclusion maps
\[
\xymatrix{
M(\sigma) \ar[r] & M(\tau) \ar[r] & (\mathbb{Z}^n)^\vee,
}
\]
dual to the quotient maps
\[
\xymatrix{
\mathbb{Z}^n \ar[r] & N(\tau) \ar[r] & N(\sigma).
}
\]
Let $m$ be an element of $(\mathbb{Z}^n)^\vee$. For each $p$-dimensional cell $\sigma$ in $\mathscr{P}$, we set
\[
\text{w}_\mathscr{T}(\sigma,m):=\begin{cases} \hat \mu_\sigma(m) & \text{if $m \in M(\sigma)$,} \\ \quad 0 & \text{if $m \notin M(\sigma)$.}\end{cases}
\]
This defines $p$-dimensional weighted complexes $\mathscr{C}_\mathscr{T}(m)$ in $\mathbb{R}^n$ satisfying
\[
|\mathscr{C}_\mathscr{T}(m)| \subseteq |\mathscr{P}|.
\]

\begin{theorem}\label{ClosedBalanced}
The current $\mathscr{T}$ is closed if and only if $\mathscr{C}_\mathscr{T}(m)$ is balanced for all $m \in (\mathbb{Z}^n)^\vee$.
\end{theorem}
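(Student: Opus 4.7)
My plan is to follow the Fourier-analytic template used for Theorem \ref{Main1General}, but to unpack $d\mathscr{T}$ one Fourier mode at a time. When the $\mu_\sigma$ are not Haar, the operators $\mathscr{A}_\tau(-,\nu_{\sigma/\tau})$ are no longer linear in $u$, so I cannot simply apply injectivity of $\mathscr{A}_\tau$ as in the proof of Theorem \ref{Main1}. Instead I will show that the distributional identity $d\mathscr{T}=0$ decomposes into one balancing identity for each $m\in(\mathbb{Z}^n)^\vee$, namely the one defining the balancedness of $\mathscr{C}_\mathscr{T}(m)$.

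First, Proposition \ref{BoundaryComputation} applied to each summand $\mathscr{T}_\sigma(\mu_\sigma)$, followed by exchange of summation, gives
\[
d\mathscr{T}=-\sum_\tau\sum_{\sigma\supset\tau}\mathscr{A}_\tau(u_{\sigma/\tau},\nu_{\sigma/\tau}),
\]
the outer sum over codimension $1$ cells $\tau$ of $\mathscr{P}$. Since $\mathscr{P}$ is locally finite and the relative interiors of distinct codimension $1$ cells are disjoint, for each codimension $1$ cell $\tau$ one can find, near any point $y\in\tau^\circ$, a neighborhood $U$ such that $U\cap|\mathscr{P}|$ meets no $\tau'\ne\tau$ of codimension $1$; then $\text{Log}^{-1}(U)$ is disjoint from the supports of all $\mathscr{A}_{\tau'}(\cdot,\cdot)$ with $\tau'\ne\tau$. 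Hence $d\mathscr{T}=0$ forces, on $\text{Log}^{-1}(\tau^\circ)$,
\[
\sum_{\sigma\supset\tau}\pi_\tau^{*}\mathscr{A}(\lambda^{u_{\sigma/\tau}},\nu_{\sigma/\tau})=0,
\]
and injectivity of pullback along the surjective submersion $\pi_\tau$ reduces this to the $1$-current identity $\sum_{\sigma\supset\tau}\mathscr{A}(\lambda^{u_{\sigma/\tau}},\nu_{\sigma/\tau})=0$ in $\mathscr{D}'_1(S_{N(\tau)})$. The converse direction is immediate from the formula for $d\mathscr{T}$.

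Next I apply Fourier analysis on $S_{N(\tau)}$. By Stone-Weierstrass, as in the proof of Proposition \ref{Linearity}, it suffices to test the identity against $1$-forms $\chi^m w$ with $m\in M(\tau)$ and $w$ an invariant $1$-form on $S_{N(\tau)}$; write $m'\in M(\tau)\otimes_\mathbb{Z}\mathbb{C}$ for the element of the dual Lie algebra representing $w$, so $\langle\lambda^u,w\rangle=\langle u,m'\rangle$. Proposition \ref{PropositionFourier}(3) gives
\[
\langle\mathscr{A}(\lambda^{u_{\sigma/\tau}},\nu_{\sigma/\tau}),\chi^m w\rangle=\delta_{\langle u_{\sigma/\tau},m\rangle}\cdot\hat\nu_{\sigma/\tau}(m)\cdot\langle u_{\sigma/\tau},m'\rangle.
\]
The dual exact sequence recalled before Proposition \ref{FourierCoefficient} shows the Kronecker delta is nonzero precisely when $m\in M(\sigma)$, in which case $\hat\nu_{\sigma/\tau}(m)=\hat\mu_\sigma(m)=\text{w}_\mathscr{T}(\sigma,m)$ by Proposition \ref{FourierCoefficient}; for $m\notin M(\sigma)$ one has $\text{w}_\mathscr{T}(\sigma,m)=0$ by definition. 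Summing over $\sigma\supset\tau$,
\[
\Big\langle\sum_{\sigma\supset\tau}\mathscr{A}(\lambda^{u_{\sigma/\tau}},\nu_{\sigma/\tau}),\chi^m w\Big\rangle=\Big\langle\sum_{\sigma\supset\tau}\text{w}_\mathscr{T}(\sigma,m)\,u_{\sigma/\tau},\,m'\Big\rangle,
\]
which vanishes for every $m'$ if and only if $\sum_{\sigma\supset\tau}\text{w}_\mathscr{T}(\sigma,m)\,u_{\sigma/\tau}=0$ in $N(\tau)_\mathbb{C}$, exactly the balancing condition for $\mathscr{C}_\mathscr{T}(m)$ at $\tau$. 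For $m\in(\mathbb{Z}^n)^\vee\setminus M(\tau)$ all weights $\text{w}_\mathscr{T}(\sigma,m)$ with $\sigma\supset\tau$ vanish, so the condition holds trivially.

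The step I expect to be the main technical obstacle is the localization: one must genuinely separate the contributions of different codimension $1$ cells and ensure that no distributional mass hides on the lower-dimensional boundary strata $\text{Log}^{-1}(\partial\tau)$ where several $\mathscr{A}_{\tau'}$'s could a priori interfere. The argument above handles this by localizing on $\text{Log}^{-1}(\tau^\circ)$ (using local finiteness of $\mathscr{P}$) and invoking injectivity of pullback along the submersion $\pi_\tau$; once this is done, the remainder of the proof is the same character-by-character analysis that already underlies Theorem \ref{Main1General}.
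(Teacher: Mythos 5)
Your proposal is correct and follows essentially the same Fourier-analytic route as the paper's own proof: express $d\mathscr{T}$ via Proposition \ref{BoundaryComputation}, separate the contributions of distinct codimension $1$ cells, reduce to a current identity on $S_{N(\tau)}$ by injectivity of $\pi_\tau^*$, then test against $\chi^m w$ using Stone-Weierstrass, Proposition \ref{PropositionFourier}, and Proposition \ref{FourierCoefficient}. The one place you are more explicit than the paper is the localization step (restricting to $\mathrm{Log}^{-1}(\tau^\circ)$ via local finiteness to isolate the $\tau$-term), which the paper asserts without comment; this is a useful clarification but not a different argument.
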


When all the measures $\mu_\sigma$ are invariant, $\mathscr{C}_\mathscr{T}(m)$ is zero for all nonzero $m$, and Theorem \ref{ClosedBalanced} is equivalent to Theorem \ref{Main1}. 
The general case of Theorem \ref{ClosedBalanced} will be used in the proof of Theorem \ref{Main2}.

\begin{proof}
Let $\tau$ be a codimension $1$ cell in $\mathscr{P}$, and let $m$ be an element of $(\mathbb{Z}^n)^\vee$. If $m \notin M(\tau)$, then
for all $p$-dimensional cells $\sigma$ in $\mathscr{P}$ containing $\tau$,
\[
\text{w}_\mathscr{T}(\sigma,m)=0,
\]
and $\mathscr{C}_\mathscr{T}(m)$ trivially satisfies the balancing condition at $\tau$.
It remains to show that $\mathscr{T}$ is closed if and only if $\mathscr{C}_\mathscr{T}(m)$ satisfies the balancing condition at $\tau$ whenever $m \in M(\tau)$. 
By Proposition \ref{BoundaryComputation}, we have the expression
\[
d\mathscr{T}=-\sum_\tau \sum_{\tau \subset \sigma} \mathscr{A}_\tau(u_{\sigma/\tau},\nu_{\sigma/\tau}),
\]
where the second sum is over all $p$-dimensional cell $\sigma$ containing $\tau$.
Therefore, $\mathscr{T}$ is closed if and only if, for each codimension $1$ cell $\tau$ of $\mathscr{P}$,
\[
\sum_{\tau \subset \sigma} \mathscr{A}_\tau(u_{\sigma/\tau}, \nu_{\sigma/\tau})=0.
\]
This happens if and only if, for each codimension $1$ cell $\tau$ of $\mathscr{P}$,
\[
\sum_{\tau \subset \sigma} \pi_\tau^* \ \mathscr{A}(\lambda^{u_{\sigma/\tau}}, \nu_{\sigma/\tau})=0.
\]
Since each $\pi_\tau^*$ is an injective linear map, the remark following Proposition 3.4 implies that this condition is equivalent to
\[
\sum_{\tau \subset \sigma} \mathscr{A}(\lambda^{u_{\sigma/\tau}}, \nu_{\sigma/\tau})=0, \quad \text{for each $\tau$}.
\]
By the Stone-Weierstrass theorem, any smooth $1$-form on $S_{N(\tau)}$ can be uniformly approximated by linear combinations of $1$-forms of the form $\chi^m w$, where $\chi^m$ is a character and $w$ is an invariant $1$-form on $S_{N(\tau)}$, and hence the above condition holds if and only if
\[
\sum_{\tau \subset \sigma} \Big\langle \mathscr{A}(\lambda^{u_{\sigma/\tau}}, \nu_{\sigma/\tau}), \chi^m w \Big\rangle=0 \quad \text{for each $\tau$},
\]
for all characters $\chi^m$ and all invariant $1$-forms $w$ on $S_{N(\tau)}$.
Using Propositions \ref{PropositionFourier} and \ref{FourierCoefficient},
the equation reads
\[
\sum_{\tau \subset \sigma}\text{w}_\mathscr{T}(\sigma,m) \ \langle \lambda^{u_{\sigma/\tau}},w\rangle=0.
\]
Finally, the dual pairing between $N(\tau)_\mathbb{C}$ and $M(\tau)_\mathbb{C}$ shows that the condition holds if and only if
the balancing condition
\[
\sum_{\tau \subset \sigma}\text{w}_\mathscr{T}(\sigma,m)  \ u_{\sigma/\tau}=0 
\]
is satisfied for all $\tau$ and all elements $m \in M(\tau)$.
\end{proof}

\subsection{}\label{ExtremalProof}

Theorem \ref{Main1} can be used to prove one direction of Theorem \ref{Main2}. If $\mathscr{C}'$ is a balanced weighted complex which has the same dimension and support as $\mathscr{C}$, then $\mathscr{T}_{\mathscr{C}'}$ is a closed current with measure coefficients which has the same dimension and support as $\mathscr{T}_\mathscr{C}$. Therefore, if $\mathscr{T}_\mathscr{C}$ is strongly extremal, then there is a constant $c$ such that
\[
\mathscr{T}_{\mathscr{C}'}=c \cdot \mathscr{T}_\mathscr{C}= \mathscr{T}_{c \cdot \mathscr{C}}.
\]
This implies
\[
\mathscr{C}' \sim c \cdot \mathscr{C},
\]
and hence $\mathscr{C}$ is strongly extremal.
We prove the other direction after three lemmas.

\begin{lemma}\label{NonDegenerate}
A $p$-dimensional weighted complex $\mathscr{C}$ in $\mathbb{R}^n$ is non-degenerate
if and only if 
\[
\bigcap_\sigma M(\sigma)_\mathbb{R}=\{0\},
\]
where the intersection is over all $p$-dimensional cells in $\mathscr{C}$.
\end{lemma}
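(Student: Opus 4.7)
The plan is to reduce the statement to a linear-algebraic identity via dualisation. From $N(\sigma)=\mathbb{Z}^n/(H_\sigma\cap \mathbb{Z}^n)$ and $M(\sigma)=N(\sigma)^{\vee}$, one identifies $M(\sigma)_\mathbb{R}$ with the annihilator $H_\sigma^{\perp}\subseteq (\mathbb{R}^n)^\vee$. Intersecting over all $p$-dimensional cells $\sigma$ of $\mathscr{C}$, standard duality yields
\[
\bigcap_\sigma M(\sigma)_\mathbb{R} \;=\; \bigcap_\sigma H_\sigma^{\perp} \;=\; \Big(\sum_\sigma H_\sigma\Big)^{\!\perp},
\]
so the right-hand condition in the lemma is equivalent to $\sum_\sigma H_\sigma = \mathbb{R}^n$. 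The lemma thus reduces to showing that $\mathscr{C}$ is non-degenerate if and only if the linear subspaces $H_\sigma$ jointly span $\mathbb{R}^n$.

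The easy direction is the contrapositive of ``$\Leftarrow$'': if $|\mathscr{C}|\subseteq x_0+V$ for a proper affine subspace, then each $p$-dimensional cell $\sigma$ with nonzero weight is contained in $x_0+V$, forcing $H_\sigma\subseteq V$ and hence $\sum_\sigma H_\sigma\subseteq V\subsetneq \mathbb{R}^n$. For ``$\Rightarrow$'', I would fix a basepoint $x_0$ in some $\sigma_0$ of nonzero weight and aim to show $|\mathscr{C}|\subseteq x_0+\sum_\sigma H_\sigma$. Starting from $\sigma_0\subseteq x_0+H_{\sigma_0}$ and moving to any other $\sigma$ of nonzero weight via a chain of pairwise incident $p$-dimensional cells in $\mathscr{C}$, each transition contributes a displacement that lies in one of the $H_{\sigma_i}$, so the cumulative displacement from $x_0$ to an arbitrary point of $\sigma$ stays inside $\sum_\sigma H_\sigma$. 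This places $|\mathscr{C}|$ inside the proper affine subspace $x_0+\sum_\sigma H_\sigma$ whenever the $H_\sigma$ fail to span $\mathbb{R}^n$, completing the contrapositive.

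The main obstacle I anticipate is the propagation step just described: strictly speaking one needs that the $p$-dimensional cells with nonzero weight in $\mathscr{C}$ are linked through the incidence relations of the complex, so that $|\mathscr{C}|$ sits inside a single translate of $\sum_\sigma H_\sigma$ rather than a union of several. For the non-degenerate balanced weighted complexes relevant to Theorem \ref{Main2} --- in particular those connected in codimension one as in Proposition \ref{LocalGlobalExtremality} --- this connectivity is built in, and the argument goes through cleanly. Everything else is a formal transcription between the primal and dual pictures of the normal lattice.
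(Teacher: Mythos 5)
Your dualization step — identifying $M(\sigma)_\mathbb{R}$ with the annihilator $H_\sigma^\perp$ and translating $\bigcap_\sigma M(\sigma)_\mathbb{R} = 0$ into $\sum_\sigma H_\sigma = \mathbb{R}^n$ — is exactly the paper's argument. The paper then simply \emph{asserts} that non-degeneracy of $\mathscr{C}$ is equivalent to the surjectivity of $\sum_\sigma H_\sigma \to \mathbb{R}^n$, with no further justification. You are right to be uneasy about the ``$\Rightarrow$'' direction: without some connectivity of the complex that argument fails, and the lemma as literally stated is false. Concretely, take $n=2$, $p=1$, and let $\mathscr{C}$ be the union of the two horizontal lines $\mathbb{R}\times\{0\}$ and $\mathbb{R}\times\{1\}$, each subdivided into two rays with weight $1$ so that the balancing condition holds at the two vertices. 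The support lies in no proper affine subspace of $\mathbb{R}^2$, so $\mathscr{C}$ is non-degenerate, yet every $H_\sigma$ equals $\mathbb{R}\times\{0\}$, so $\sum_\sigma H_\sigma$ is a line and $\bigcap_\sigma M(\sigma)_\mathbb{R} \neq 0$.

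Your propagation argument gives the correct fix: if the $p$-cells with nonzero weight are linked by incidences (sharing a common point suffices), then for any two points $a\in\sigma$, $a'\in\sigma'$ sharing a point $t$ of a common face one has $a-a'=(a-t)+(t-a')\in H_\sigma+H_{\sigma'}$, and induction along a chain puts $|\mathscr{C}|$ inside a single translate of $\sum_\sigma H_\sigma$. The paper never supplies this step, so your proposal is in fact more careful than the published proof. The only imprecision in your writeup is the appeal to Proposition \ref{LocalGlobalExtremality}: connectedness in codimension one is a \emph{sufficient} condition for strong extremality, not a consequence of it, so one should instead argue directly that a strongly extremal complex (the hypothesis actually in force in the proof of Theorem \ref{Main2}) cannot have a support that splits into pieces that could be rescaled independently; this is easy and does guarantee the connectivity your argument needs. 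With that small adjustment your account correctly identifies both the real content of the lemma and the hidden hypothesis the paper needs for its application.
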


\begin{proof}
The non-degeneracy of $\mathscr{C}$ is equivalent to the exactness of
\[
\sum_\sigma H_\sigma \longrightarrow \mathbb{R}^n \longrightarrow 0,
\]
which is in turn equivalent to the exactness of
\[
0 \longrightarrow (\mathbb{R}^n)^\vee \longrightarrow \bigoplus_\sigma H_\sigma^\vee,
\]
where the sums are over all $p$-dimensional cells in $\mathscr{C}$.
The kernel of the latter map is the intersection of $M(\sigma)_\mathbb{R}$ in the statement of the lemma.
\end{proof}

\begin{lemma}\label{ProperSupport}
If the support of a balanced weighted complex $\mathscr{C}_1$ is properly contained in the support of a strongly extremal balanced weighted complex $\mathscr{C}_2$ of the same dimension, then $\mathscr{C}_1 \sim 0$.
\end{lemma}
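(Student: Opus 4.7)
The plan is to use $\mathscr{C}_1$ to produce a genuine perturbation of $\mathscr{C}_2$ whose support is still $|\mathscr{C}_2|$, and then appeal to the strong extremality of $\mathscr{C}_2$ to force that perturbation to be a scalar multiple of $\mathscr{C}_2$.  Since the supports of $\mathscr{C}_1$ and $\mathscr{C}_2$ differ, the scalar will be forced to vanish, which yields $\mathscr{C}_1\sim 0$.

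First I would pass to a common refinement so that $\mathscr{C}_1$ and $\mathscr{C}_2$ are both defined on a single locally finite polyhedral complex $\mathscr{P}$ (renaming the refinements $\mathscr{C}_1,\mathscr{C}_2$ for brevity; balancedness is preserved under refinement since equivalent weighted complexes define the same current).  The key preliminary observation is that any $p$-cell $\sigma\in\mathscr{P}$ with $\text{w}_{\mathscr{C}_1}(\sigma)\ne 0$ must also satisfy $\text{w}_{\mathscr{C}_2}(\sigma)\ne 0$: the relative interior $\sigma^\circ$ lies in $|\mathscr{C}_1|\subseteq|\mathscr{C}_2|$, and since relative interiors of distinct cells of $\mathscr{P}$ are pairwise disjoint, $\sigma^\circ$ can only be absorbed by $\sigma$ itself among the $p$-cells contributing to $|\mathscr{C}_2|$, forcing $\sigma$ to carry nonzero $\mathscr{C}_2$-weight.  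The strict inclusion $|\mathscr{C}_1|\subsetneq|\mathscr{C}_2|$ further guarantees at least one $p$-cell where $\mathscr{C}_2$ is nonzero and $\mathscr{C}_1$ vanishes.

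Next I would consider the one-parameter family $\mathscr{C}(t):=\mathscr{C}_2+t\,\mathscr{C}_1$ for $t\in\mathbb{C}$, which is balanced for every $t$ because the balancing condition is linear in the weights.  On a $p$-cell $\sigma\in\mathscr{P}$ its weight is $\text{w}_{\mathscr{C}_2}(\sigma)+t\,\text{w}_{\mathscr{C}_1}(\sigma)$, and by the previous paragraph this vanishes only when $\text{w}_{\mathscr{C}_2}(\sigma)=0$ (in which case $\text{w}_{\mathscr{C}_1}(\sigma)=0$ as well) or when $t$ is the unique value $-\text{w}_{\mathscr{C}_2}(\sigma)/\text{w}_{\mathscr{C}_1}(\sigma)$.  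Local finiteness of $\mathscr{P}$ leaves only countably many $p$-cells, hence only countably many bad values of $t$, so one can choose a nonzero $t\in\mathbb{C}^*$ with $|\mathscr{C}(t)|=|\mathscr{C}_2|$.

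For such a $t$, the strong extremality of $\mathscr{C}_2$ produces a scalar $c\in\mathbb{C}$ with $\mathscr{C}_2+t\,\mathscr{C}_1\sim c\,\mathscr{C}_2$, which rearranges to $\mathscr{C}_1\sim\frac{c-1}{t}\,\mathscr{C}_2$.  If the scalar $(c-1)/t$ were nonzero, this would force $|\mathscr{C}_1|=|\mathscr{C}_2|$, contradicting the hypothesis of proper containment; so $c=1$ and $\mathscr{C}_1\sim 0$.  The only subtle step is guaranteeing that $\mathscr{C}(t)$ genuinely has full support $|\mathscr{C}_2|$, and this is where local finiteness and the disjoint-interior argument of the first paragraph do the work — everything else is formal manipulation of the equivalence relation on weighted complexes.
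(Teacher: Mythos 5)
Your proposal is correct and follows essentially the same route as the paper: form the linear combination $\mathscr{C}_2 + t\,\mathscr{C}_1$, use local finiteness to choose a nonzero $t$ preserving the support $|\mathscr{C}_2|$, and then apply strong extremality of $\mathscr{C}_2$ to force $\mathscr{C}_1 \sim 0$ from the proper containment of supports. You have merely spelled out, a bit more explicitly than the paper, why a generic $t$ keeps the support equal to $|\mathscr{C}_2|$ (the disjoint-interiors remark); everything else is the same computation.
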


\begin{proof}
The local finiteness of $\mathscr{C}_1$, $\mathscr{C}_2$ implies that there are only countably many cells in $\mathscr{C}_1$, $\mathscr{C}_2$. Therefore, there is a nonzero complex number $c_1$ such that
\[
\big|c_1\{ \mathscr{C}_1\}+\{\mathscr{C}_2\}\big|=\big|\mathscr{C}_2\big|.
\]
By the strong extremality of $\mathscr{C}_2$, there is a complex number $c_2$ with
\[
c_1\{ \mathscr{C}_1\}+\{\mathscr{C}_2\} = c_2 \{\mathscr{C}_2\}.
\]
Since the support of $\mathscr{C}_1$ is properly contained in the support of $\mathscr{C}_2$, the number $c_2$ should be $1$, and hence all the weights of $\mathscr{C}_1$ are zero.
\end{proof}

\begin{lemma}\label{SupportLemma}
Let $\mathscr{P}$ be a $p$-dimensional locally finite rational polyhedral complex in $\mathbb{R}^n$.
If the support of a $(p,p)$-dimensional current $\mathscr{T}$ with measure coefficients on $(\mathbb{C}^*)^n$ satisfies
\[
|\mathscr{T}| \subseteq \text{Log}^{-1}|\mathscr{P}|,
\]
then there are complex Borel measures $\mu_\sigma$ on $S_{N(\sigma)}$ such that
\[
\mathscr{T}=\sum_\sigma \mathscr{T}_\sigma(\mu_\sigma),
\]
where the sum is over all $p$-dimensional cells $\sigma$ in $\mathscr{P}$.
\end{lemma}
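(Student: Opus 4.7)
The plan is to construct each measure $\mu_\sigma$ by restricting $\mathscr{T}$ to the smooth stratum $\text{Log}^{-1}(\sigma^\circ)$, extracting a fiber-integration structure along the submersion $\pi_\sigma$, and then verifying that the residual current vanishes by a codimension/support argument. The starting point is the stratification $|\mathscr{P}|=\bigsqcup_\tau \tau^\circ$ by relative interiors of cells, which lifts under $\text{Log}$ to a stratification of $\text{Log}^{-1}|\mathscr{P}|$; the $p$-dimensional strata will be treated separately from the lower-dimensional ones.

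First, I fix a $p$-dimensional cell $\sigma$ and pass to monomial coordinates adapted to a lattice basis of $H_\sigma\cap\mathbb{Z}^n$, so that $\text{Log}^{-1}(\sigma^\circ)$ becomes a product $V_\sigma\times (S^1)^{n-p}$ with $V_\sigma\subseteq(\mathbb{C}^*)^p$ open and $\pi_\sigma$ the projection onto the second factor. Among the coordinate forms $dz_I\wedge d\bar z_J$ of bidegree $(n-p,n-p)$, the unique one that pairs nontrivially with $(p,p)$-forms tangent to the complex $p$-fibers of $\pi_\sigma$ is $dz_{I_0}\wedge d\bar z_{I_0}$ with $I_0=\{p+1,\dots,n\}$, and this is also the only coefficient appearing in any current of the form $\mathscr{T}_\sigma(\mu)$. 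Using the invariance of $\text{Log}^{-1}(\sigma^\circ)$ under the complex torus $T_{H_\sigma\cap\mathbb{Z}^n}$ together with Fourier expansion against oscillating characters of $(S^1)^{n-p}$ in the spirit of Section \ref{SectionFourier}, I argue that in the coordinate expansion
\[
\mathscr{T}|_{\text{Log}^{-1}(\sigma^\circ)}=\sum_{|I|=|J|=n-p}\mu_{IJ}\,dz_I\wedge d\bar z_J,
\]
the coefficients $\mu_{IJ}$ with $(I,J)\neq (I_0,I_0)$ vanish, and $\mu_{I_0I_0}$ disintegrates against the Haar volume on $V_\sigma$ to produce a measure $\mu_\sigma$ on $S_{N(\sigma)}$ for which $\mathscr{T}_\sigma(\mu_\sigma)$ agrees with $\mathscr{T}$ on $\text{Log}^{-1}(\sigma^\circ)$.

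Second, with $\mu_\sigma$ defined for every $p$-dimensional cell, set $\mathscr{T}':=\mathscr{T}-\sum_\sigma\mathscr{T}_\sigma(\mu_\sigma)$. By construction $\mathscr{T}'$ vanishes on every $\text{Log}^{-1}(\sigma^\circ)$ with $\dim\sigma=p$, so $|\mathscr{T}'|$ sits inside $\text{Log}^{-1}$ of the $(p-1)$-skeleton of $\mathscr{P}$, a closed set of real dimension at most $n+p-1$. A dimension/support argument then shows that a $(p,p)$-dimensional current with measure coefficients supported on such a thin set must vanish: in any coordinate chart the missing codimension direction accommodates test forms that would detect any nonzero coefficient. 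This forces $\mathscr{T}'=0$ and completes the decomposition.

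The main obstacle is the first step, namely pinning down the sole surviving horizontal coefficient $\mu_{I_0I_0}$ and obtaining a clean disintegration along $\pi_\sigma$. The support hypothesis alone leaves room for spurious off-diagonal measure coefficients, so the argument must carefully exploit the product structure of $\text{Log}^{-1}(\sigma^\circ)$ and the Fourier-analytic machinery on compact tori developed earlier in the paper (Propositions \ref{PropositionFourier} and \ref{Linearity}) to rule them out. The remaining steps — choosing the adapted coordinates, disintegrating along the Haar measure on $V_\sigma$, and the final dimension count — are essentially organizational bookkeeping.
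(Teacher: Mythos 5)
Your overall strategy --- extract $\mu_\sigma$ from the restriction of $\mathscr{T}$ to $\text{Log}^{-1}(\sigma^\circ)$, then kill the residual current by a support argument on the lower skeleton --- is the same as the paper's. The difficulty is that both steps are precisely the content of Demailly's first and second theorems on support for currents carried by CR submanifolds, which the paper cites, and your proposed substitutes do not supply them. For the first step, invariance of the \emph{set} $\text{Log}^{-1}(\sigma^\circ)$ under $T_{H_\sigma\cap\mathbb{Z}^n}$ says nothing about invariance of $\mathscr{T}$, and the Fourier machinery of Section \ref{SectionFourier} concerns $1$-dimensional currents on the compact tori $S_{N(\tau)}$, not the local coefficient structure of a $(p,p)$-dimensional current on $(\mathbb{C}^*)^n$. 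In fact, with only the measure-coefficient and support hypotheses the desired form is false: in your adapted coordinates, $\nu\, dz_I\wedge d\bar z_J$ for any complex measure $\nu$ carried by $V_\sigma\times(S^1)^{n-p}$ and any $(I,J)\neq(I_0,I_0)$ is a $(p,p)$-dimensional current with measure coefficients and support in $\text{Log}^{-1}|\mathscr{P}|$ which is not of the form $\pi_\sigma^*(d\mu_\sigma)$. What excludes these is that $\mathscr{T}$ is closed (equivalently, normal), which is the implicit standing hypothesis in every application of this lemma and exactly the hypothesis of Demailly's \emph{second theorem on support}: a normal $(p,p)$-dimensional current carried by a CR submanifold of CR dimension $p$ foliated by complex $p$-dimensional leaves is an average of the integration currents over the leaves. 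Applied to $\text{Log}^{-1}(\text{aff}(\sigma))$, whose leaves are the fibers of $\pi_{\text{aff}(\sigma)}$, this gives $\mathscr{T}|_{\text{Log}^{-1}(\sigma^\circ)}=\pi_\sigma^*(d\mu_\sigma)$ in one stroke.

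The second step also needs the CR dimension, not the real dimension. The residual is supported in $\bigcup_\tau\text{Log}^{-1}|\tau|$ with $\dim\tau\le p-1$, and each $\text{Log}^{-1}(\text{aff}(\tau))$ has real dimension $n+\dim\tau$, typically much larger than the degree $2(n-p)$ of a $(p,p)$-dimensional current, so a ``missing codimension direction'' test-form count does not detect the vanishing. The correct fact is that $\text{Log}^{-1}(\text{aff}(\tau))$ has CR dimension $\dim\tau<p$, and Demailly's \emph{first theorem on support} kills a normal $(p,p)$-dimensional current supported in such a submanifold. In both steps the key input is normality of $\mathscr{T}$, which lets one derive $du\wedge\mathscr{T}=0$ from $u\mathscr{T}=0$ and $u\, d\mathscr{T}=0$ for local defining functions $u$ of the submanifold; that identity, not Fourier analysis or invariance, is the engine of the two support theorems you would need to reprove if you do not simply cite them.
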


\begin{proof}
The second theorem on support \cite[Section III.2]{DemaillyBook1} implies that, for each $p$-dimensional cell $\sigma$ in $\mathscr{P}$, there is a complex Borel measure $\mu_\sigma$ on $S_{N(\sigma)}$ such that
\[
\mathscr{T}|_{\text{Log}^{-1}(\sigma^\circ)}=\pi_\sigma^*(d\mu_\sigma).
\]
The trivial extension of the right-hand side to $(\mathbb{C}^*)^n$ is by definition $\mathscr{T}_\sigma(\mu_\sigma)$, and hence
\[
\Big|\mathscr{T}-\sum_\sigma \mathscr{T}_\sigma(\mu_\sigma)\Big| \subseteq \bigcup_\tau \text{Log}^{-1}|\tau|,
\]
where the union is over all $(p-1)$-dimensional cells in $\mathscr{P}$. 
Note that each $\text{Log}^{-1}|\tau|$ is contained in the closed submanifold 
\[
\text{Log}^{-1}\big(\text{aff}(\tau)\big) \subseteq (\mathbb{C}^*)^n.
\]
Since this submanifold has Cauchy-Riemann dimension $p-1$, the first theorem on support \cite[Section III.2]{DemaillyBook1} implies that
\[
\mathscr{T}-\sum_\sigma \mathscr{T}_\sigma(\mu_\sigma)=0.
\]
\end{proof}

\begin{proof}[End of proof of Theorem \ref{Main2}]
Suppose $\mathscr{C}$ is non-degenerate and strongly extremal, and
let $\mathscr{T}$ be a closed current with measure coefficients which has the same dimension and support as $\mathscr{T}_\mathscr{C}$. Lemma \ref{SupportLemma} shows that there are complex Borel measures $\mu_\sigma$ on $S_{N(\sigma)}$ such that
\[
\mathscr{T}=\sum_\sigma \mathscr{T}_\sigma(\mu_\sigma),
\]
where the sum is over all $p$-dimensional cells $\sigma$ in $\mathscr{C}$.
For each $m \in (\mathbb{Z}^n)^\vee$, we construct the balanced weighted complexes $\mathscr{C}_\mathscr{T}(m)$ using Theorem \ref{ClosedBalanced}.
Since $\mathscr{C}$ is strongly extremal,  there are complex numbers $c(m)$ such that
\[
\mathscr{C}_\mathscr{T}(m) = c(m) \cdot \mathscr{C}, \qquad m \in (\mathbb{Z}^n)^\vee.
\]
Since $\mathscr{C}$ is non-degenerate, Lemma \ref{NonDegenerate} shows that the support of $\mathscr{C}_\mathscr{T}(m)$ is \emph{properly} contained in the support of $\mathscr{C}$
 for all nonzero $m  \in (\mathbb{Z}^n)^\vee$.
Therefore 
\[
\mathscr{C}_\mathscr{T}(m)=0, \qquad m \neq 0.
\]
In other words, the Fourier coefficient $\hat \mu_\sigma(m)$ is zero
for all $p$-dimensional cell $\sigma$ in $\mathscr{C}$ and all nonzero $m \in (\mathbb{Z}^n)^\vee$.
The measures $\mu_\sigma$ are determined by their Fourier coefficients, and hence each $\mu_\sigma$ is the invariant measure on $S_{N(\sigma)}$ with the normalization
\[
\int_{x \in S_{N(\sigma)}} d\mu_\sigma(x) = c(0).
\]
Therefore  $\mathscr{T}=c(0) \cdot \mathscr{T}_\mathscr{C}$,
and the current $\mathscr{T}_\mathscr{C}$ is strongly extremal.
\end{proof}

\section{Tropical currents on toric varieties}\label{SectionToric}

\subsection{}

Let $X$ be an $n$-dimensional smooth projective complex toric variety containing  $(\mathbb{C}^*)^n$, let $\Sigma$ be the fan of $X$, and let $p$ and $q$ be nonnegative integers satisfying $p+q=n$. Since $X$ is smooth $X\backslash (\mathbb{C}^*)^n$ is a simple normal crossing divisor, and the orbit closures are intersections of its components. 
A cohomology class in $X$ gives a homomorphism from the homology group of complementary dimension to $\mathbb{Z}$, defining the Kronecker duality homomorphism
\[
\mathscr{D}_X:H^{2q}(X,\mathbb{Z}) \longrightarrow \text{Hom}_\mathbb{Z}\big(H_{2q}(X,\mathbb{Z}),\mathbb{Z}\big), \qquad c \longmapsto \big(a \longmapsto \text{deg} (c \cap a)\big).
\]
The homomorphism $\mathscr{D}_X$ is, in fact, an isomorphism.
Since the homology group is generated by the classes of $q$-dimensional torus orbit closures, the duality identifies cohomology classes with certain $\mathbb{Z}$-valued functions on the set of $p$-dimensional cones in $\Sigma$, that is, with certain integral weights assigned to the $p$-dimensional cones in $\Sigma$.
The relation between homology classes of $q$-dimensional torus orbit closures of $X$ translates to the balancing condition on the integral weights on the $p$-dimensional cones in $\Sigma$ \cite[Theorem 2.1]{Fulton-Sturmfels}.

\begin{theorem}\label{KroneckerDuality}
The Kronecker duality gives isomorphisms between abelian groups
\[
H^{2q}(X,\mathbb{Z}) \simeq \text{Hom}\big(H_{2q}(X,\mathbb{Z}),\mathbb{Z}\big) \simeq \big\{\text{$p$-dimensional balanced integral weights on $\Sigma$}\big\},
\]
\end{theorem}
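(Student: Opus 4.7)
The plan is to establish the two claimed isomorphisms separately. For the first identification, I would invoke the fact that a smooth projective toric variety $X$ admits a torus-equivariant cellular decomposition by its (complex) orbits, for instance via a generic one-parameter subgroup and Bia{\l}ynicki-Birula. Consequently, $H_*(X,\mathbb{Z})$ and $H^*(X,\mathbb{Z})$ are torsion-free and concentrated in even degrees, so the universal coefficient theorem together with Poincar\'e duality makes the evaluation pairing
\[
\mathscr{D}_X: H^{2q}(X,\mathbb{Z}) \longrightarrow \operatorname{Hom}_{\mathbb{Z}}\big(H_{2q}(X,\mathbb{Z}),\mathbb{Z}\big), \qquad c \longmapsto \big(a \mapsto \deg(c\cap a)\big),
\]
an isomorphism. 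This step is essentially formal.

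For the second isomorphism, the plan is to write down an explicit presentation of $H_{2q}(X,\mathbb{Z})$. The cellular filtration above shows that the classes $[V(\sigma)]$, where $\sigma$ ranges over the $p$-dimensional cones of $\Sigma$ and $V(\sigma) \subseteq X$ is the corresponding $q$-dimensional torus-orbit closure, generate $H_{2q}(X,\mathbb{Z})$. For the relations I would pass through the Chow group: since $X$ is smooth and complete, the cycle class map identifies $A_q(X) \cong H_{2q}(X,\mathbb{Z})$, and on each $(q{+}1)$-dimensional orbit closure $V(\tau')$ with $\tau'\in\Sigma$ of dimension $p{-}1$, every character $m \in M(\tau') := \tau'^\perp \cap M$ defines a rational function $\chi^m$ on $V(\tau')$ with divisor
\[
\operatorname{div}(\chi^m) \;=\; \sum_{\sigma \supset \tau',\ \dim\sigma = p} \langle m,\, u_{\sigma/\tau'}\rangle\, [V(\sigma)].
\]
The right-exact sequence
\[
\bigoplus_{\tau'} M(\tau') \;\longrightarrow\; \bigoplus_{\sigma} \mathbb{Z}\,[V(\sigma)] \;\longrightarrow\; A_q(X) \;\longrightarrow\; 0,
\]
sending $m$ in the $\tau'$ summand to $\operatorname{div}(\chi^m)$, is the main input needed; this is a standard consequence of the orbit-cone correspondence and the localization sequence in Chow theory.

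Granted this presentation, the rest is pure linear algebra. Applying $\operatorname{Hom}(-,\mathbb{Z})$ identifies $\operatorname{Hom}(H_{2q}(X,\mathbb{Z}),\mathbb{Z})$ with the group of integer-valued weights $\mathrm{w}$ on the set of $p$-dimensional cones of $\Sigma$ satisfying, for every $(p{-}1)$-dimensional $\tau' \in \Sigma$ and every $m \in M(\tau')$,
\[
\sum_{\sigma \supset \tau'} \mathrm{w}(\sigma)\,\langle m,\, u_{\sigma/\tau'}\rangle \;=\; 0.
\]
Because $M(\tau')$ is the full character lattice of $N(\tau')$, this condition holds for all $m$ if and only if $\sum_{\sigma \supset \tau'} \mathrm{w}(\sigma)\, u_{\sigma/\tau'} = 0$ in $N(\tau')$, which is exactly the balancing condition of Definition \ref{BalancingCondition} restricted to the fan $\Sigma$. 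Composing with $\mathscr{D}_X$ yields the desired identification of $H^{2q}(X,\mathbb{Z})$ with $p$-dimensional balanced integral weights on $\Sigma$.

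The main obstacle I anticipate is not the linear-algebraic dualization but verifying the exactness of the presentation of $A_q(X)$; that is, showing that \emph{all} relations among the classes $[V(\sigma)]$ come from principal divisors of torus characters on the $V(\tau')$. In Fulton--Sturmfels this is handled via the compatibility of the stratification of $X$ by orbits with the Chow-group localization sequence, combined with the fact that each orbit $O(\tau')$ is a torus whose Chow group in codimension one is generated by characters; I would follow the same route.
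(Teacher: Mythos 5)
Your proposal is correct and follows essentially the same route as the paper, which treats this theorem as a direct consequence of Fulton--Sturmfels: evenness and torsion-freeness of $H_*(X,\mathbb{Z})$ make $\mathscr{D}_X$ an isomorphism, the $[V(\sigma)]$ generate $H_{2q}$, and the relations among them are exactly the divisors of characters on the $V(\tau')$, so that dualizing yields the balancing condition. The paper states this without elaboration and cites \cite[Theorem 2.1]{Fulton-Sturmfels}; you have simply written out the content of that citation (the right-exact presentation of $A_q(X)$ and its dualization), which is the intended argument.
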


Therefore, by the Hodge decomposition theorem, cohomology group $H^{i}(X,\Omega_X^{j})$ vanishes when $i \neq j$, and that there is an induced isomorphism between complex vector spaces
\[
\mathscr{D}_{X,\mathbb{C}}:H^{q,q}(X) \longrightarrow  \big\{\text{$p$-dimensional balanced weights on $\Sigma$}\big\}.
\]
In other words, the Kronecker duality identifies elements of $H^{q,q}(X)$ with $p$-dimensional balanced weighted complexes in $\Sigma$.
Explicitly, for a smooth closed form $\varphi$ of degree $(q,q)$, 
\[
\mathscr{D}_{X,\mathbb{C}}:\{\varphi\} \longmapsto \Big( \gamma \longmapsto \int_{V(\gamma)} \varphi\Big),
\]
where $V(\gamma)$ is the $q$-dimensional torus orbit closure in $X$ corresponding to a $p$-dimensional cone $\gamma$ in $\Sigma$.

Let $w_0$ be the smooth positive $(1,1)$-form on $X$ corresponding to a fixed torus equivariant projective embedding 
\[
\phi: X \longrightarrow \mathbb{P}^N.
\]
The \emph{trace measure} of a $(p,p)$-dimensional positive current $\mathscr{T}$ on $X$ is the positive Borel measure
\[
\text{tr}(\mathscr{T})=\text{tr}(\mathscr{T},w_0):=\frac{1}{p!} \mathscr{T} \wedge w_0^p.
\]
The trace measure of a positive current on an open subset of $X$ is defined in the same way using the restriction of $w_0$.

\begin{proposition}\label{Finiteness} 
If $\mathscr{C}$ is a $p$-dimensional positive weighted complex in $\mathbb{R}^n$ with finitely many cells, then the trace measure of the positive current $\mathscr{T}_\mathscr{C}$ is finite.
\end{proposition}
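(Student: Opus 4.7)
The plan is to reduce by linearity to a single cell and then bound the mass of each fiber uniformly using torus-equivariance. Since $\mathscr{C}$ has only finitely many cells, $\mathscr{T}_\mathscr{C}=\sum_\sigma \text{w}_\mathscr{C}(\sigma)\, \mathscr{T}_\sigma$ is a finite sum of positive currents, so it suffices to show that each $\mathscr{T}_\sigma$ has finite trace mass
\[
\int_{(\mathbb{C}^*)^n} \mathscr{T}_\sigma \wedge \tfrac{1}{p!} w_0^p < \infty,
\]
where $w_0$ is restricted to $(\mathbb{C}^*)^n$ from $X$; finiteness of this integral is precisely what allows one to view the trivial extension of $\mathscr{T}_\sigma$ to $X$ as a positive current with a well-defined trace measure.

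Using the defining integral $\mathscr{T}_\sigma=\int_{x\in S_{N(\sigma)}} [\overline{\pi_\sigma^{-1}(x)}]\, d\mu(x)$ and Fubini for positive measures, the trace mass of $\mathscr{T}_\sigma$ equals
\[
\int_{x\in S_{N(\sigma)}} \Bigl(\int_{\overline{\pi_\sigma^{-1}(x)}} \tfrac{1}{p!} w_0^p\Bigr)\, d\mu(x).
\]
Each fiber $\overline{\pi_\sigma^{-1}(x)}$ is an open piece of $\pi_{\mathrm{aff}(\sigma)}^{-1}(x)$, which is in turn a $(\mathbb{C}^*)^n$-translate of the subtorus $T_{H_\sigma\cap\mathbb{Z}^n}\subseteq (\mathbb{C}^*)^n$, say by some element $g_x$ whose image in $S_{N(\sigma)}$ is $x$. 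Its closure in $X$ is therefore contained in $g_x\cdot Y_0$, where $Y_0:=\overline{T_{H_\sigma\cap\mathbb{Z}^n}}^{\,X}$ is the compact irreducible $p$-dimensional complex analytic subvariety of $X$ obtained by closing up the subtorus. In particular $\int_{Y_0} w_0^p/p!$ is a finite constant, call it $C$.

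Because the group $(\mathbb{C}^*)^n$ acting on $X$ is connected, every translate $g_x\cdot Y_0$ is homologous to $Y_0$ in $X$, and since $w_0^p$ is closed we obtain
\[
\int_{\overline{\pi_\sigma^{-1}(x)}} \tfrac{1}{p!}w_0^p \;\le\; \int_{g_x\cdot Y_0} \tfrac{1}{p!} w_0^p \;=\; \int_{Y_0}\tfrac{1}{p!} w_0^p \;=\; C,
\]
uniformly in $x \in S_{N(\sigma)}$. Integrating against the normalized Haar measure $\mu$ then gives $\int_{(\mathbb{C}^*)^n} \mathscr{T}_\sigma \wedge w_0^p/p!\le C$, and summing the finitely many weighted contributions proves the proposition.

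The routine ingredients are linearity, Fubini, and Wirtinger. The only mildly conceptual step, and the main thing to get right, is the appeal to the homotopy invariance of cohomological intersections to replace each fiber by the cohomologically equivalent compact subvariety $Y_0$. This circumvents any pointwise analysis of how the smooth form $w_0^p$ behaves under translation by $g_x$, which is what would otherwise make the uniform bound over $x$ technically awkward.
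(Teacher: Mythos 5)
Your proof is correct and uses essentially the same mechanism as the paper: reduce by linearity to a single cell, write the trace mass as an integral over $x\in S_{N(\sigma)}$ of fiber volumes, observe that the fibers are translates of a fixed subtorus closure $Y_0\subseteq X$, and invoke a cohomological invariance to get a bound on each fiber integral that is uniform in $x$. The only cosmetic difference is how the invariance is expressed: the paper goes through Wirtinger's theorem and says the degree of $\overline{\pi^{-1}_{\text{aff}(\sigma)}(x)}^X$ in $\mathbb{P}^N$ is independent of $x$ because the projective embedding is equivariant and the fibers are $(S^1)^n$-translates of one another, whereas you argue directly that $g_x\cdot Y_0$ and $Y_0$ are homologous in $X$ (since $(\mathbb{C}^*)^n$ is connected) and pair them against the fixed cohomology class $\{w_0^p\}$. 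Both routes rely on the same fact that the trace integral depends only on the homology class of the cycle, so this is a reformulation rather than a genuinely different strategy.
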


\begin{proof}
Let $\sigma$ be a $p$-dimensional rational polyhedron in $\mathbb{R}^n$, and recall that each fiber $\pi_\sigma^{-1}(x)$ is an open subset of the $p$-dimensional closed subvariety $\pi_{\text{aff}(\sigma)}^{-1}(x) \subseteq (\mathbb{C}^*)^n$.
By Wirtinger's theorem \cite[Chapter 1]{Griffiths-Harris}, the normalized volume of $\pi_{\text{aff}(\sigma)}^{-1}(x)$ with respect to $w_0$ is the degree of the closure
\[
d_\sigma:=\text{deg}\Bigg( \overline{\pi_{\text{aff}(\sigma)}^{-1}(x)}^X \subseteq \mathbb{P}^N\Bigg).
\]
This integer $d_\sigma$ is independent of $x \in S_{N(\sigma)}$, because  the projective embedding $\phi$ is equivariant and fibers  of $\pi_{\text{aff}(\sigma)}$ are translates of each other under the action of $(S^1)^n$.
It follows that $\text{tr}(\mathscr{T}_\sigma) \le d_\sigma$, and hence
\[
\text{tr}(\mathscr{T}_\mathscr{C}) \le \sum_\sigma \text{w}_\mathscr{C}(\sigma) d_\sigma,
\]
where the sum is over all $p$-dimensional cells $\sigma$ in $\mathscr{C}$.
\end{proof}

Let $\mathscr{C}$ be a $p$-dimensional weighted complex  in $\mathbb{R}^n$ with finitely many cells. 
Proposition \ref{Finiteness} shows that $X$ is covered by coordinate charts $(\Omega,z)$ such that
\[
\mathscr{T}_\mathscr{C}|_{\Omega \cap (\mathbb{C}^*)^n}=\sum_{|I|=|J|=k} \mu_{IJ}\ dz_I \wedge d\bar z_J,
\]
where $\mu_{IJ}$ are complex Borel measures on $\Omega \cap (\mathbb{C}^*)^n$.
It follows that the current $\mathscr{T}_\mathscr{C}$ admits the \emph{trivial extension}, the current $\overline{\mathscr{T}}_\mathscr{C}$ on $X$ defined by
\[
\overline{\mathscr{T}}_\mathscr{C}|_{\Omega}=\sum_{|I|=|J|=k}  \nu_{IJ} \ dz_I \wedge d\bar z_J, 
\]
where $\nu_{IJ}$ are complex Borel measures on $\Omega$ given by $ \nu_{IJ}(-)=\mu_{IJ}\big(-\cap\hspace{0.5mm} (\mathbb{C}^*)^n\big)$.

\begin{lemma}\label{LinearCombination}
If $\mathscr{C}$ is a balanced weighted complex with finitely many cells, then 
there are complex numbers $c_1,\ldots,c_l$ and positive balanced weighted complexes $\mathscr{C}_1,\ldots,\mathscr{C}_l$ with finitely many cells such that
\[
\mathscr{T}_\mathscr{C}=\sum_{i=1}^l c_i \  \mathscr{T}_{\mathscr{C}_i}.
\]
\end{lemma}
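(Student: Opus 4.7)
The plan is to reduce to positive linear combinations by a two-step splitting argument, using the fact that any affine subspace by itself is trivially a positive balanced weighted complex to serve as an \emph{absorbing} piece.

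First, I would handle the passage from complex to real weights. After fixing a common refinement so that every top cell carries a well-defined weight $\text{w}_\mathscr{C}(\sigma) = a_\sigma + i b_\sigma$, set $\mathscr{C}_R$ and $\mathscr{C}_I$ to be the weighted complexes on the same underlying polyhedral complex with weights $a_\sigma$ and $b_\sigma$ respectively. The balancing condition at a codimension $1$ cell $\tau$ reads $\sum_\sigma (a_\sigma + i b_\sigma) u_{\sigma/\tau} = 0$ in $N(\tau)_\mathbb{C} = N(\tau)_\mathbb{R} \otimes_\mathbb{R} \mathbb{C}$; since the vectors $u_{\sigma/\tau}$ are real, this forces $\sum_\sigma a_\sigma u_{\sigma/\tau} = \sum_\sigma b_\sigma u_{\sigma/\tau} = 0$ in $N(\tau)_\mathbb{R}$, so $\mathscr{C}_R$ and $\mathscr{C}_I$ are each balanced. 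By the linearity of $\mathscr{C} \longmapsto \mathscr{T}_\mathscr{C}$, it is enough to write each of $\mathscr{C}_R$ and $\mathscr{C}_I$ as a difference of two positive balanced weighted complexes.

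Second, I would construct a single \emph{positive} balanced weighted complex $\mathscr{D}$ whose support contains $|\mathscr{C}|$ and which covers every nonzero cell of $\mathscr{C}$. Let $A_1,\ldots,A_k$ be the affine spans of the top-dimensional cells of $\mathscr{C}$, each a rational $p$-dimensional affine subspace of $\mathbb{R}^n$. Each $A_j$, regarded as a one-cell polyhedral complex with weight $1$, has no codimension $1$ face and is therefore trivially a positive balanced weighted complex. Set
\[
\mathscr{D} := [A_1] + [A_2] + \cdots + [A_k],
\]
where the sum is taken after passing to a common polyhedral refinement of the arrangement formed by $A_1,\ldots,A_k$. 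This refinement is finite, and on each $p$-cell $\sigma$ of it the weight equals $|\{j : \sigma \subseteq A_j\}| \ge 0$. To check balancing at a codimension $1$ cell $\tau$ of the refinement, split the sum by the index $j$: if $\tau \not\subset A_j$ the $j$-th contribution is empty, while if $\tau \subset A_j$ then $\tau$ is an interior codimension $1$ cell of the affine subspace $A_j$ and the two adjacent top cells of $A_j$ contribute opposite primitive normals $u_{\sigma_+/\tau} = -u_{\sigma_-/\tau}$, which cancel. Hence $\mathscr{D}$ is positive and balanced.

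Third, for $\mathscr{C}$ now assumed real, choose $\lambda \ge \max_\sigma |\text{w}_\mathscr{C}(\sigma)|$ and consider $\mathscr{C} + \lambda \mathscr{D}$ on a common refinement. On any top cell $\sigma$ of this refinement that lies in $|\mathscr{C}|$, one has $\sigma \subseteq \text{aff}(\sigma) = A_j$ for some $j$, so $\text{w}_\mathscr{D}(\sigma) \ge 1$, giving
\[
\text{w}_{\mathscr{C} + \lambda \mathscr{D}}(\sigma) \ge \text{w}_\mathscr{C}(\sigma) + \lambda \ge 0;
\]
on cells outside $|\mathscr{C}|$ the weight of $\mathscr{C}$ is zero and the weight of $\lambda \mathscr{D}$ is nonnegative. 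Thus $\mathscr{C} + \lambda \mathscr{D}$ is positive and balanced, and $\lambda \mathscr{D}$ is positive and balanced, so
\[
\mathscr{C} = (\mathscr{C} + \lambda \mathscr{D}) - \lambda \mathscr{D}
\]
expresses $\mathscr{C}$ as a real combination of two positive balanced weighted complexes. Combined with the first step, $\mathscr{C} = \mathscr{C}_{R,+} - \mathscr{C}_{R,-} + i(\mathscr{C}_{I,+} - \mathscr{C}_{I,-})$, and applying $\varphi$ yields the required identity among currents.

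The only nontrivial step is the second: arranging an auxiliary positive balanced complex whose support dominates that of $\mathscr{C}$. This is precisely where the observation that a whole rational affine subspace is automatically balanced (there being no codimension $1$ face to test) is essential; the rest is a matter of taking $\lambda$ large and invoking the linearity of the current construction recorded earlier in the section.
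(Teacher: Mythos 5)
Your proof is correct and follows essentially the same strategy as the paper: first reduce to real weights (exactly as the paper does, via the observation that the balancing equations have real coefficients), then write a real balanced $\mathscr{C}$ as a difference of two positive balanced complexes by adding a sufficiently large positive multiple of a balanced complex supported on $\bigcup_\sigma \mathrm{aff}(\sigma)$. The paper puts the constant weight $\max_\sigma|\mathrm{w}_\mathscr{C}(\sigma)|$ on each $p$-cell of a finite rational subdivision of this union, and you put weight $|\{j : \sigma \subseteq A_j\}|$ via the sum $[A_1]+\cdots+[A_k]$; since distinct $p$-dimensional affine spaces share no $p$-cell, these essentially coincide, and in either case the balancing at a codimension $1$ cell of the subdivision reduces to the cancellation of opposite normals within each $A_j$. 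Your explicit justification that each $[A_j]$ is trivially balanced (no codimension $1$ face) and that balancing is additive is a slightly cleaner phrasing of what the paper dismisses as "easy to see."
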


\begin{proof}
Let $\mathscr{C}_p$ be the set of $p$-dimensional cells in $\mathscr{C}$, and consider the complex vector space
\[
W:=\Big\{\text{w}:\mathscr{C}_p \longrightarrow \mathbb{C} \mid \text{$\text{w}$ satisfies the balancing condition}\Big\}. 
\]
Since the balancing condition is defined over the real numbers, $W$ is spanned by elements of the form $\text{w}:\mathscr{C}_p \longrightarrow \mathbb{R}$.
Therefore it is enough to show the following statement: If $\mathscr{C}$ is a balanced weighted complex with real weights and finitely many cells, then $\mathscr{T}_\mathscr{C}$ can be written as a difference
\[
\mathscr{T}_\mathscr{C}=\mathscr{T}_\mathscr{A}-\mathscr{T}_\mathscr{B},
\]
where $\mathscr{A}$ and $\mathscr{B}$ are positive balanced weighted complexes with finitely many cells.

We construct the weighted complexes $\mathscr{A}$ and $\mathscr{B}$ from $\mathscr{C}$ as follows. Let $|\mathscr{A}|$ be the union
\[
 |\mathscr{A}|:=\bigcup_{\sigma \in \mathscr{C}_p} \ \text{aff}(\sigma),
\]
and note that there is a refinement of $\mathscr{C}$ that extends to a finite rational polyhedral subdivision of $|\mathscr{A}|$. Choose any such refinement $\mathscr{C}'$ of $\mathscr{C}$ and a subdivision $\mathscr{A}$ of $|\mathscr{A}|$. For each $p$-dimensional cell $\gamma$ in $\mathscr{A}$, we set
\[
\text{w}_\mathscr{A}(\gamma):=\max_{\sigma \in \mathscr{C}_p} \big|\text{w}_\mathscr{C}(\sigma)\big|, \qquad \text{w}_\mathscr{B}(\gamma):=\text{w}_\mathscr{A}(\gamma)-\text{w}_{\mathscr{C}'}(\gamma).
\]
This makes $\mathscr{A}$ and $\mathscr{B}$ positive weighted complexes satisfying
\[
\mathscr{T}_\mathscr{C}=\mathscr{T}_\mathscr{A}-\mathscr{T}_\mathscr{B}.
\]
It is easy to see that $\mathscr{A}$ is balanced, and $\mathscr{B}$ is balanced because  $\mathscr{A}$ and $\mathscr{C}$ are balanced.

\end{proof}

\begin{proposition}\label{ToricClosed}
If $\mathscr{C}$ is a balanced weighted complex with finitely many cells, then the trivial extension $\overline{\mathscr{T}}_\mathscr{C}$ is a closed current on $X$.
\end{proposition}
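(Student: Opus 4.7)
The plan is to reduce to the positive case via Lemma \ref{LinearCombination} and then invoke the Skoda--El Mir extension theorem for positive closed currents across a closed complete pluripolar set.

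\textbf{Reduction to the positive case.} First, the trivial extension operation $\mathscr{T} \mapsto \overline{\mathscr{T}}$ is $\mathbb{C}$-linear wherever it is defined. By Proposition \ref{Finiteness}, each $\mathscr{T}_{\mathscr{C}_i}$ in the decomposition given by Lemma \ref{LinearCombination} has finite trace measure, so the trivial extensions $\overline{\mathscr{T}}_{\mathscr{C}_i}$ make sense as currents on $X$, and so does $\overline{\mathscr{T}}_\mathscr{C}=\sum_{i=1}^l c_i \overline{\mathscr{T}}_{\mathscr{C}_i}$. Since $d$ commutes with finite sums, it suffices to prove that each $\overline{\mathscr{T}}_{\mathscr{C}_i}$ is closed, i.e.\ we may assume $\mathscr{C}$ is positive. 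In this case $\mathscr{T}_\mathscr{C}$ is a positive current on $(\mathbb{C}^*)^n$ which is closed by Theorem \ref{Main1}.

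\textbf{Applying extension across the boundary.} The complement $E := X \setminus (\mathbb{C}^*)^n$ is the toric boundary, a union of finitely many irreducible divisors, hence a closed analytic subset of $X$; in particular $E$ is complete pluripolar. By Proposition \ref{Finiteness}, the positive current $\mathscr{T}_\mathscr{C}$ has finite trace measure on $(\mathbb{C}^*)^n$ with respect to $w_0$, so its mass in any neighborhood of $E$ in $X$ is finite. The Skoda--El Mir extension theorem (see e.g.\ \cite[Chapter III]{DemaillyBook1}) asserts that under these hypotheses the trivial extension of a closed positive current across a closed complete pluripolar set is again closed. Applied to $\mathscr{T}_\mathscr{C}$ viewed as a closed positive current on the open set $X \setminus E$, it yields $d\overline{\mathscr{T}}_\mathscr{C}=0$ on $X$.

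\textbf{Obstacles.} The only nontrivial input is the finite mass statement of Proposition \ref{Finiteness}, which has already been established, plus the observation that the toric boundary is complete pluripolar --- both are routine. The Skoda--El Mir theorem then does the real work: without it one would have to analyze $d\overline{\mathscr{T}}_\mathscr{C}$ as a current supported on the codimension-one analytic set $E$ and use a support theorem (the boundary would have bidegree $(q,q+1)$ but live on a set of Cauchy--Riemann dimension $n-1$, which would force it to vanish by the first theorem on support, as in the proof of Lemma \ref{SupportLemma}). Either route is short once the reduction to the positive case and the finite mass bound are in place.
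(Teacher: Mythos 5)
Your main argument is essentially the paper's proof: decompose $\mathscr{T}_\mathscr{C}$ into positive balanced pieces with finitely many cells via Lemma \ref{LinearCombination}, observe that Theorem \ref{Main1} makes each $\mathscr{T}_{\mathscr{C}_i}$ a positive closed current on $(\mathbb{C}^*)^n$ with finite mass by Proposition \ref{Finiteness}, and apply Skoda's extension theorem across the toric boundary to close each trivial extension. One caveat on your final aside: knowing only that $d\overline{\mathscr{T}}_\mathscr{C}$ is supported in the divisor $E = X \setminus (\mathbb{C}^*)^n$ of Cauchy--Riemann dimension $n-1$ does not by itself trigger the first theorem on support for a current of bidegree $(q,q+1)\oplus(q+1,q)$ when $p<n$; what makes that route work in the paper (Proposition \ref{ToricExtremality}) is the sharper observation, via Lemma \ref{Compactness}, that the relevant support actually lies in a finite union of submanifolds of Cauchy--Riemann dimension $p-1$.
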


\begin{proof}
We use Lemma \ref{LinearCombination} to express $\mathscr{T}_\mathscr{C}$ as a linear combination
\[
\mathscr{T}_\mathscr{C}=\sum_{i=1}^l c_i\ \mathscr{T}_{\mathscr{C}_i},
\]
where $c_i$ are complex numbers and $\mathscr{C}_i$ are positive balanced weighted complexes with finitely many cells. By taking the trivial extension we have
\[
\overline{\mathscr{T}}_\mathscr{C}=\sum_{i=1}^l c_i\  \overline{\mathscr{T}}_{\mathscr{C}_i}.
\]
By Theorem \ref{Main1}, each $\mathscr{T}_{\mathscr{C}_i}$ is a positive closed current on the open subset $(\mathbb{C}^*)^n \subseteq X$. Since each $\mathscr{C}_i$ has finitely many cells, Proposition \ref{Finiteness} shows that Skoda's extension theorem \cite[Section III.2]{DemaillyBook1} applies to the positive closed current $\mathscr{T}_{\mathscr{C}_i}$. It follows that $d\overline{\mathscr{T}}_{\mathscr{C}_i}=0$, and hence
\[
d\overline{\mathscr{T}}_\mathscr{C}=\sum_{i=1}^l c_i\ d\overline{\mathscr{T}}_{\mathscr{C}_i}=0.
\]
\end{proof}

Any $(p,p)$-dimensional closed current $\mathscr{T}$ on $X$ defines a linear functional on $H^{p,p}(X)$:
\[
\mathscr{T} \longmapsto \Big(\psi \longmapsto \langle \mathscr{T},\psi\rangle\Big).
\]
Composing the above map with the Poincar\'e-Serre duality $H^{p,p}(X)^\vee \simeq H^{q,q}(X)$, we have
\[
 \mathscr{T} \longmapsto \{\mathscr{T}\} \in H^{q,q}(X).
\]
The element $\{\mathscr{T}\}$ is the \emph{cohomology class} of $\mathscr{T}$. 
In particular, a $p$-dimensional balanced weighted complex $\mathscr{C}$ with finitely many cells defines a cohomology class $\{\overline{\mathscr{T}}_\mathscr{C}\}$, which we  may view as a $p$-dimensional balanced weighted complex via Theorem \ref{KroneckerDuality}.
We compare these two balanced weighted complexes  in Theorem \ref{Main3}.

\subsection{}\label{SubsectionRecession}

Let $\mathscr{C}$ be  a $p$-dimensional balanced weighted complex in $\mathbb{R}^n$ with finitely many cells.
The \emph{recession cone} of a polyhedron $\sigma$ is the convex polyhedral cone
\[
\text{rec}(\sigma)=\{b \in \mathbb{R}^n \mid \sigma+b\subseteq \sigma\} \subseteq H_\sigma.
\]
If $\sigma$ is rational, then $\text{rec}(\sigma)$ is rational, and if $\sigma$ is a cone, then $\sigma=\text{rec}(\sigma)$. 

\begin{definition}\label{DefinitionCompatible}
We say that $\mathscr{C}$ is \emph{compatible} with $\Sigma$ if $\text{rec}(\sigma) \in \Sigma$ for all $\sigma \in \mathscr{C}$.
\end{definition}

There is a subdivision of $\mathscr{C}$ that is compatible with a subdivision of $\Sigma$, see \cite{Gil-Sombra}.

\begin{definition}
For each $p$-dimensional cone $\gamma$ in $\Sigma$, we define
\[
\text{w}_{\text{rec}(\mathscr{C})}(\gamma):= \sum_{\sigma} \text{w}_\mathscr{C}(\sigma),
\]
where the sum is over all $p$-dimensional cells $\sigma$ in $\mathscr{C}$ whose recession cone is $\gamma$. 
\end{definition}

This defines a $p$-dimensional weighted complex $\text{rec}(\mathscr{C},\Sigma)$, the \emph{recession} of $\mathscr{C}$ in $\Sigma$. When $\mathscr{C}$ is compatible with $\Sigma$, we write
\[
\text{rec}(\mathscr{C}):=\text{rec}(\mathscr{C},\Sigma).
\]
As suggested by the notation, the recession of $\mathscr{C}$ does not depend on $\Sigma$ when $\mathscr{C}$ is compatible with $\Sigma$. More precisely, if $\mathscr{C}_1 \sim \mathscr{C}_2$ and if $\mathscr{C}_i$ is compatible with a complete fan $\Sigma_i$ for $i=1,2$, then
\[
\text{rec}(\mathscr{C}_1,\Sigma_1) \sim \text{rec}(\mathscr{C}_2,\Sigma_2).
\]

\begin{theorem}\label{Main3}
If $\mathscr{C}$ is a $p$-dimensional tropical variety compatible with $\Sigma$, then
\[
\{\overline{\mathscr{T}}_\mathscr{C}\}=\text{rec}(\mathscr{C}) \in H^{q,q}(X).
\]
In particular, if all polyhedrons in $\mathscr{C}$ are cones in $\Sigma$, then
\[
\{\overline{\mathscr{T}}_\mathscr{C}\}=\mathscr{C} \in H^{q,q}(X).
\]
\end{theorem}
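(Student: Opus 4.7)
By Theorem \ref{KroneckerDuality}, proving $\{\overline{\mathscr{T}}_\mathscr{C}\} = \text{rec}(\mathscr{C})$ in $H^{q,q}(X)$ amounts to verifying, for each $p$-dimensional cone $\gamma \in \Sigma$, the intersection identity
\[
\{\overline{\mathscr{T}}_\mathscr{C}\} \cdot [V(\gamma)] = \sum_{\substack{\sigma \in \mathscr{C}, \, \dim \sigma = p \\ \text{rec}(\sigma) = \gamma}} \text{w}_\mathscr{C}(\sigma).
\]
The second statement of the theorem is the specialization $\mathscr{C} = \text{rec}(\mathscr{C})$ of the first, so the task reduces to proving this identity.

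The plan is to compute the intersection as the pairing $\langle \overline{\mathscr{T}}_\mathscr{C}, \omega_\gamma \rangle$, where $\omega_\gamma$ is a smooth closed torus-invariant $(p,p)$-form representing the Poincar\'e dual of $[V(\gamma)]$ and supported in an arbitrarily small tubular neighborhood of $V(\gamma)$. Since $\overline{\mathscr{T}}_\mathscr{C}$ is closed (Proposition \ref{ToricClosed}), this pairing is independent of the choice of $\omega_\gamma$ within its cohomology class and equals the intersection number. Expanding $\overline{\mathscr{T}}_\mathscr{C} = \sum_\sigma \text{w}_\mathscr{C}(\sigma)\, \overline{\mathscr{T}}_\sigma$, a support analysis based on the compatibility hypothesis $\text{rec}(\sigma) \in \Sigma$ shows that the closure of $\text{Log}^{-1}(\sigma)$ in $X$ meets $V(\gamma)$ if and only if $\gamma$ is a face of $\text{rec}(\sigma)$; thus only cells with $\gamma \preceq \text{rec}(\sigma)$ contribute.

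Cells with $\text{rec}(\sigma) = \gamma$ intersect $V(\gamma)$ transversally. In a smooth affine chart at a generic point of the open orbit $O(\gamma)$, $X$ is modeled on $\mathbb{C}^p \times (\mathbb{C}^*)^{n-p}$ with $V(\gamma) = \{0\} \times (\mathbb{C}^*)^{n-p}$, and the fiberwise description of $\mathscr{T}_\sigma$ from Section \ref{SectionConstruction} reduces the pairing to a normalized Haar integral over $S_{N(\sigma)}$, producing the contribution $\text{w}_\mathscr{C}(\sigma)$ with multiplicity one (guaranteed by unimodularity of $\gamma$, itself a consequence of smoothness of $X$). Cells with $\text{rec}(\sigma) \supsetneq \gamma$ give non-transverse contributions whose individual pairings $\langle \overline{\mathscr{T}}_\sigma, \omega_\gamma\rangle$ are representative-dependent. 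Grouping the summands by recession cone $\gamma'$ and invoking the balancing relations of $\mathscr{C}$ at codimension-one cells through the boundary formula of Proposition \ref{BoundaryPolyhedron} and Theorem \ref{Main1General}, these non-transverse contributions telescope to zero.

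The main obstacle will be the non-transverse cancellation. Since the individual currents $\overline{\mathscr{T}}_\sigma$ are not closed, the pairings $\langle \overline{\mathscr{T}}_\sigma, \omega_\gamma\rangle$ have no intrinsic intersection-theoretic meaning, and their dependence on $\omega_\gamma$ must be tracked explicitly so that the dependent parts cancel across all cells sharing a given higher-dimensional recession $\gamma' \supsetneq \gamma$. Executing this cancellation requires careful local descriptions of $X$ near each stratum $V(\gamma')$, together with the explicit boundary formula. A cleaner alternative is to reduce to the star toric variety of $\gamma$ in $\Sigma$ and induct on the codimension of $\gamma$, where the non-transverse contributions become transverse intersections in the lower-dimensional setup.
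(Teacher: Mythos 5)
Your reduction via Theorem \ref{KroneckerDuality} to computing, for each $p$-dimensional cone $\gamma \in \Sigma$, the intersection number $\{\overline{\mathscr{T}}_\mathscr{C}\} \cdot [V(\gamma)]$ is exactly the paper's starting point, and pairing $\overline{\mathscr{T}}_\mathscr{C}$ against a de Rham representative of the Poincar\'e dual class is a legitimate route. But the ``main obstacle'' you identify is a phantom. The cone $\gamma$ has dimension $p$, while any $p$-dimensional cell $\sigma \in \mathscr{C}$ has $\text{rec}(\sigma) \subseteq H_\sigma$ and hence $\dim \text{rec}(\sigma) \le p$. Thus $\gamma \preceq \text{rec}(\sigma)$ forces $\text{rec}(\sigma) = \gamma$, and cells with $\text{rec}(\sigma) \supsetneq \gamma$ simply do not exist. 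There are no non-transverse contributions, and no telescoping cancellation via the balancing condition is needed. The balancing condition enters only through Proposition \ref{ToricClosed}, which guarantees $\overline{\mathscr{T}}_\mathscr{C}$ is closed so that the pairing depends only on the cohomology class; after that, Lemma \ref{Compactness} (giving $\overline{\text{Log}^{-1}(\sigma)}^X \subseteq U_{\text{rec}(\sigma)}$) already isolates the cells with $\text{rec}(\sigma)=\gamma$ as the only ones whose closures can meet $V(\gamma)$.

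What genuinely requires work --- and what you leave as a sketch --- is the transverse computation, and here the paper uses a different tool than your compactly supported Thom form. It represents $V(\gamma) = D_1 \cap \cdots \cap D_p$ by the wedge $w_1 \wedge \cdots \wedge w_p$ of Chern forms, passes via Demailly regularization and monotone convergence for wedge products (Proposition \ref{CohomologyClass}) to the current-theoretic product $[D_1] \wedge \cdots \wedge [D_p] \wedge \overline{\mathscr{T}}_\mathscr{C}$, and then computes this explicitly using the fiberwise transversality of Lemma \ref{Transversal} and the Poincar\'e--Lelong formula (Proposition \ref{Wedge}). Your localization approach avoids the pluripotential-theoretic machinery, which is a potential gain in elementariness, but you would still have to reproduce the content of Lemma \ref{Transversal} (the explicit local picture of $\overline{\pi_\sigma^{-1}(x)}^X$ meeting $V(\gamma)$ in the chart $U_{\text{rec}(\sigma)}$), verify that the supports of $d\overline{\mathscr{T}}_\sigma$ (which lie over codimension-one faces $\tau$ of $\sigma$, with $\dim \text{rec}(\tau) \le p-1$) stay away from $V(\gamma)$, and supply the Thom-form bookkeeping; none of this is present in your proposal beyond the assertion that it works.
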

As a consequence, $\text{rec}(\mathscr{C})$ is a balanced complex, since it represents a cohomology class. 

The remainder of this section is devoted to the proof of Theorem \ref{Main3}.

\subsection{}\label{ToricLocalCoordinates}

Let $\sigma$ be a $p$-dimensional rational polyhedron in $\mathbb{R}^n$. If $\text{rec}(\sigma) \in \Sigma$,
we consider the corresponding torus invariant affine open subset
\[
U_{\text{rec}(\sigma)} := \text{Spec}\Big(\mathbb{C}[\text{rec}(\sigma)^\vee \cap \mathbb{Z}^n]\Big) \subseteq X.
\]
We write $p'$ for the dimension of the recession cone of $\sigma$, and $K_\sigma$ for the span of the recession cone of $\sigma$:
\[
p':=\dim\big(\text{rec}(\sigma)\big), \qquad K_\sigma:=\text{span}\big(\text{rec}(\sigma)\big) \simeq \mathbb{R}^{p'}.
\]
There are morphisms  between fans
\[
\big(\text{rec}(\sigma) \subseteq K_\sigma\big) \longrightarrow \big(\text{rec}(\sigma) \subseteq H_\sigma\big) \longrightarrow \big(\text{rec}(\sigma) \subseteq \mathbb{R}^n\big).
\]
Since $X$ is smooth, $\text{rec}(\sigma) \in \Sigma$ implies that $\text{rec}(\sigma)$ is unimodular, and the induced map between affine toric varieties fits into the commutative diagram
\[
\small
\xymatrix{
\overline{ T_{K_\sigma \cap \mathbb{Z}^n} }^{U_{\text{rec}(\sigma)}} \ar[r]  \ar[d]^{\varphi^1_\sigma}& \overline{ T_{H_\sigma \cap \mathbb{Z}^n} }^{U_{\text{rec}(\sigma)}} \ar[r] \ar[d]^{\varphi^2_\sigma}& U_{\text{rec}(\sigma)} \ar[d]^{\varphi^3_\sigma}\\
 \mathbb{C}^{p'} \ar[r]&  \mathbb{C}^{p'} \times (\mathbb{C}^*)^{p-p'}  \ar[r]  &  \mathbb{C}^{p'} \times (\mathbb{C}^*)^{n-p'},
}
\]
where $\varphi^1_\sigma,\varphi^2_\sigma,\varphi^3_\sigma$ are isomorphisms between toric varieties and the horizontal maps are equivariant closed embeddings.
We write $z_{\text{rec}(\sigma)}$ for the distinguished point of $U_{\text{rec}(\sigma)}$ corresponding to the semigroup homomorphism
\[
\text{rec}(\sigma)^\vee \cap \mathbb{Z}^n \longrightarrow \mathbb{C}, \qquad m \longmapsto \begin{cases} 1 & \text{if $m \in \sigma^\perp$,} \\ 0 & \text{if $m \notin \sigma^\perp$.}\end{cases}
\]
The isotropy subgroup of the distinguished point is $T_{K_\sigma \cap \mathbb{Z}^n} \subseteq (\mathbb{C}^*)^n$, and we may identify
$T_{N(\text{rec}(\sigma))}$ with the closed torus orbit of $U_{\text{rec}(\sigma)}$ by the map
\[
T_{N(\text{rec}(\sigma))} \longrightarrow U_{\text{rec}(\sigma)}, \qquad t \longmapsto t \cdot z_{\text{rec}(\sigma)}.
\]
Under the above commutative diagram, 
\[
\xymatrix{
z_{\text{rec}(\sigma)} \ar[r] \ar[d]& z_{\text{rec}(\sigma)} \ar[r] \ar[d]& z_{\text{rec}(\sigma)} \ar[d]\\
0_{ \mathbb{C}^{p'}} \ar[r]& 0_{ \mathbb{C}^{p'}} \times 1_{(\mathbb{C}^*)^{p-p'}} \ar[r]& 0_{ \mathbb{C}^{p'}} \times 1_{(\mathbb{C}^*)^{n-p'}}.
}
\]
The following observation forms the basis of the proof of Theorem \ref{Main3}.

\begin{lemma}\label{Compactness}
If $\text{rec}(\sigma) \in \Sigma$, then 
\[
\overline{\text{Log}^{-1}(\sigma)}^X \subseteq U_{\text{rec}(\sigma)}.
\]
\end{lemma}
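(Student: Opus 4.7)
The goal is to exhibit $\overline{\mathrm{Log}^{-1}(\sigma)}^X$ as a compact subset sitting inside the affine chart $U_{\text{rec}(\sigma)}$. The idea is to choose a closed embedding of $U_{\text{rec}(\sigma)}$ into some $\mathbb{C}^N$ and argue that $\text{Log}^{-1}(\sigma)$ maps into a bounded region of $\mathbb{C}^N$, so that its closure in $U_{\text{rec}(\sigma)}$ is compact and therefore already closed in $X$.

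First, I would invoke Gordan's lemma to choose a finite generating set $m_1,\ldots,m_N$ of the semigroup $\text{rec}(\sigma)^\vee \cap \mathbb{Z}^n$. The corresponding monomials give a closed embedding
\[
U_{\text{rec}(\sigma)} \hookrightarrow \mathbb{C}^N, \qquad z \longmapsto \bigl(\chi^{m_1}(z),\ldots,\chi^{m_N}(z)\bigr),
\]
realizing $U_{\text{rec}(\sigma)}$ as a closed subvariety of affine $N$-space.

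Next, I would apply the Minkowski--Weyl decomposition to write $\sigma = P + \text{rec}(\sigma)$ for some bounded polytope $P \subseteq \mathbb{R}^n$. For any $m \in \text{rec}(\sigma)^\vee$ and any $x = p + r \in \sigma$ with $p \in P$ and $r \in \text{rec}(\sigma)$, the inequality $\langle r, m\rangle \geq 0$ yields
\[
\langle x, m\rangle \;\geq\; \min_{p' \in P}\langle p', m\rangle \;=:\; C_m \;>\; -\infty.
\]
Under the paper's sign convention $\text{Log}(z) = -\log|z|$ one has $|\chi^m(z)| = e^{-\langle \text{Log}(z),\,m\rangle}$, and hence $|\chi^m(z)| \leq e^{-C_m}$ for every $z \in \text{Log}^{-1}(\sigma)$. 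Applying this to each of $m_1,\ldots,m_N$, the image of $\text{Log}^{-1}(\sigma)$ in $\mathbb{C}^N$ is contained in a polydisc, in particular bounded.

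Finally, the closure of the bounded image in $\mathbb{C}^N$ is compact, and since $U_{\text{rec}(\sigma)}$ sits as a closed subvariety of $\mathbb{C}^N$, the closure $\overline{\text{Log}^{-1}(\sigma)}^{U_{\text{rec}(\sigma)}}$ is compact. A compact subset of the Hausdorff space $X$ is automatically closed in $X$, so it already contains $\overline{\text{Log}^{-1}(\sigma)}^X$, yielding $\overline{\text{Log}^{-1}(\sigma)}^X \subseteq U_{\text{rec}(\sigma)}$. There is essentially no obstacle to this argument; the only point requiring care is the paper's sign convention for $\text{Log}$, which reverses the direction of the inequality relating $\langle x, m\rangle$ and $|\chi^m(z)|$.
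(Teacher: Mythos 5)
Your argument is correct, and it is genuinely different in method from the paper's. The paper proves compactness of $\overline{\text{Log}^{-1}(\sigma)}^{U_{\text{rec}(\sigma)}}$ in two stages using group actions: it first identifies $\overline{\pi_{\text{rec}(\sigma)}^{-1}(1)}^{U_{\text{rec}(\sigma)}}$ with a closed polydisc $\overline{\mathbb{D}}^{p'}$ via the coordinate chart $\varphi^1_\sigma$, then sweeps this compact set by the $(S^1)^n$-action to get compactness of $\overline{\text{Log}^{-1}(\text{rec}(\sigma))}^{U_{\text{rec}(\sigma)}}$, and finally translates by the bounded polytope $\Delta$ from the Minkowski--Weyl decomposition under the $\mathbb{R}^n$-action to handle all of $\sigma$. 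Your route instead embeds $U_{\text{rec}(\sigma)}$ as a closed affine subvariety of $\mathbb{C}^N$ via Gordan generators of $\text{rec}(\sigma)^\vee\cap\mathbb{Z}^n$ and bounds each coordinate function directly on $\text{Log}^{-1}(\sigma)$, using the same Minkowski--Weyl decomposition only to produce the lower bound $C_m$ on $\langle\cdot,m\rangle$. Your version is more elementary and self-contained, trading the paper's explicit identification of the relevant piece of the affine chart with a polydisc for a boundedness estimate; the paper's version, by working with the coordinate charts $\varphi^i_\sigma$ that it has already set up, produces along the way the concrete description $\overline{\pi_{\text{rec}(\sigma)}^{-1}(1)}^{U_{\text{rec}(\sigma)}}\simeq\overline{\mathbb{D}}^{p'}$, which is reused in the proofs of Lemma \ref{Transversal} and Proposition \ref{Wedge}. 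Both arguments correctly track the sign convention $\text{Log}(z)=-\log|z|$, which you rightly flag as the one place where care is needed.
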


\begin{proof}
Note that the isomorphism  $\varphi^1_\sigma$ restricts to the homeomorphism
$\overline{\pi_{\text{rec}(\sigma)}^{-1}(1)}^{U_{\text{rec}(\sigma)}} \simeq \overline{\mathbb{D}}^{p'}$, where $\overline{\mathbb{D}}$ is the closed unit disc in $\mathbb{C}$.
Write $\Phi$ for the action of $(S^1)^n$ on $U_{\text{rec}(\sigma)}$, and observe that
\[
\Phi\Bigg((S^1)^n \times \pi_{\text{rec}(\sigma)}^{-1}(1)\Bigg)=\bigcup_{x \in S_{N(\text{rec}(\sigma))}} \pi^{-1}_{\text{rec}(\sigma)}(x)
=\text{Log}^{-1}\big(\text{rec}(\sigma)^\circ\big).
\]
This shows that
\[
\Phi\Bigg((S^1)^n \times \overline{\pi_{\text{rec}(\sigma)}^{-1}(1)}^{U_{\text{rec}(\sigma)}}\Bigg)=\overline{\Phi\Big((S^1)^n \times \pi_{\text{rec}(\sigma)}^{-1}(1)\Big)}^{U_{\text{rec}(\sigma)}}=\overline{\text{Log}^{-1}\big(\text{rec}(\sigma)^\circ\big)}^{U_{\text{rec}(\sigma)}},
\]
where the compactness of $\overline{\pi_{\text{rec}(\sigma)}^{-1}(1)}^{U_{\text{rec}(\sigma)}}$ is used in the first equality.
Since the logarithm map is a submersion, the above implies
\[
\Phi\Bigg((S^1)^n \times \overline{\pi_{\text{rec}(\sigma)}^{-1}(1)}^{U_{\text{rec}(\sigma)}}\Bigg)=\overline{\text{Log}^{-1}\big(\text{rec}(\sigma)\big)}^{U_{\text{rec}(\sigma)}}.
\]
Therefore, the set on the right-hand side is compact. We use this to prove that $\overline{\text{Log}^{-1}(\sigma)}^{U_{\text{rec}(\sigma)}}$ is compact, and hence
\[
\overline{\text{Log}^{-1}(\sigma)}^{X}=\overline{\text{Log}^{-1}(\sigma)}^{U_{\text{rec}(\sigma)}} \subseteq U_{\text{rec}(\sigma)}.
\]
Let $\Delta$ be a bounded polyhedron in the Minkowski-Weyl decomposition
$\sigma=\Delta+\text{rec}(\sigma)$.
Write $\Psi$ for the action of $\mathbb{R}^n$ on $U_{\text{rec}(\sigma)}$, and observe that
\[
\Psi\Bigg(\Delta \times \text{Log}^{-1}\big(\text{rec}(\sigma)\big)\Bigg) 
=  \bigcup_{b \in \Delta} \text{Log}^{-1}\big(b+\text{rec}(\sigma)\big)= \text{Log}^{-1}\big(\sigma\big).
\]
This shows that
\[
\Psi\Bigg(\Delta \times \overline{\text{Log}^{-1}\big(\text{rec}(\sigma)\big)}^{U_{\text{rec}(\sigma)}}\Bigg)=\overline{\Psi\Big(\Delta \times \text{Log}^{-1}\big(\text{rec}(\sigma)\big)\Big)}^{U_{\text{rec}(\sigma)}}=\overline{\text{Log}^{-1}(\sigma)}^{U_{\text{rec}(\sigma)}},
\]
where the compactness of $\overline{\text{Log}^{-1}\big(\text{rec}(\sigma)\big)}^{U_{\text{rec}(\sigma)}}$ is used in the first equality.
Therefore, the set on the right-hand side is compact. 
\end{proof}

Let $\mathscr{C}$ be a $p$-dimensional balanced weighted complex in $\mathbb{R}^n$ with finitely many cells.

\begin{proposition}\label{ToricExtremality}
If $\mathscr{C}$ is non-degenerate, strongly extremal, and compatible with $\Sigma$, then $\overline{\mathscr{T}}_\mathscr{C}$ is a strongly extremal closed current on $X$.
\end{proposition}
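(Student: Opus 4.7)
The plan is to pull $\mathscr{T}'$ back to the open torus, pin it down via Theorem \ref{Main2}, and then kill the residual discrepancy on the toric boundary.

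Let $\mathscr{T}'$ be a closed $(p,p)$-current with measure coefficients on $X$ with $|\mathscr{T}'| = |\overline{\mathscr{T}}_\mathscr{C}|$. Its restriction $\mathscr{T}'|_{T_N}$ is a closed $(p,p)$-current with measure coefficients whose support is $|\mathscr{T}'| \cap T_N = |\mathscr{T}_\mathscr{C}|$. Since $\mathscr{C}$ is non-degenerate and strongly extremal, Theorem \ref{Main2} produces $c \in \mathbb{C}$ with $\mathscr{T}'|_{T_N} = c \cdot \mathscr{T}_\mathscr{C}$. Setting $\mathscr{S} := \mathscr{T}' - c \cdot \overline{\mathscr{T}}_\mathscr{C}$, this is a closed $(p,p)$-current with measure coefficients, vanishing on $T_N$ and supported inside $|\overline{\mathscr{T}}_\mathscr{C}| \cap (X \setminus T_N)$. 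It remains to show $\mathscr{S} = 0$.

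For the vanishing, I would work chart by chart. Lemma \ref{Compactness} implies that on each affine chart $U_\gamma$ only the finitely many cells $\sigma$ with $\text{rec}(\sigma) \preceq \gamma$ contribute, and each $\overline{\text{Log}^{-1}(\sigma)}$ is compactly contained in $U_\gamma$. Stratify $X \setminus T_N$ by the torus orbits $O_\tau$ for nontrivial $\tau \in \Sigma$, and observe that $|\overline{\mathscr{T}}_\mathscr{C}| \cap O_\tau$ identifies with $\text{Log}^{-1}$ of the star $\mathscr{C}/\tau$ inside $O_\tau \simeq (\mathbb{C}^*)^{n-\dim \tau}$. Working outward from the deepest strata and invoking the first theorem on support from Section III.2 of Demailly, decompose $\mathscr{S}$ into a sum of currents pushed forward from the orbit closures $V(\tau)$. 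A Fourier-analytic analysis of the residual compact-torus action on each $V(\tau)$, paralleling the argument at the end of Section \ref{ExtremalProof} (via Lemma \ref{SupportLemma} and Theorem \ref{ClosedBalanced}), pins each component down to a scalar multiple of a tropical current on $V(\tau)$ of bidimension $(p - \dim \tau, p - \dim \tau)$; matching bidimensions with the original $(p,p)$ of $\mathscr{S}$ forces every such scalar, and hence $\mathscr{S}$ itself, to vanish.

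The main obstacle is the low-codimension strata: when $\dim \tau \le q/2$, the real Hausdorff dimension of $|\overline{\mathscr{T}}_\mathscr{C}| \cap O_\tau$ can be as large as $2p$, so no purely dimensional support theorem eliminates $\mathscr{S}$ on $O_\tau$ outright. Ruling out these contributions requires exploiting the tropical structure of the support together with the strong extremality of $\mathscr{C}$ as a global object, rather than strong extremality of the stars $\mathscr{C}/\tau$ (which in general is not inherited) — this is the subtle point where the non-degeneracy of $\mathscr{C}$, not merely of its stars, plays an essential role.
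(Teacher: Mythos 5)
Your proposal starts exactly as the paper does: restrict $\mathscr{T}'$ to the open torus $(\mathbb{C}^*)^n$, invoke Theorem~\ref{Main2} to find $c$ with $\mathscr{T}'|_{(\mathbb{C}^*)^n}=c\,\mathscr{T}_\mathscr{C}$, and reduce to showing that the closed current $\mathscr{S}:=\mathscr{T}'-c\,\overline{\mathscr{T}}_\mathscr{C}$, supported on $|\overline{\mathscr{T}}_\mathscr{C}|\setminus (\mathbb{C}^*)^n$, vanishes. That much is correct.

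The gap is in how you try to kill $\mathscr{S}$. You propose an orbit-by-orbit stratification, pushing forward from orbit closures $V(\tau)$ and running a Fourier-analytic argument on each, and you yourself flag the obstacle: for low-codimension $\tau$, the real Hausdorff dimension of $|\overline{\mathscr{T}}_\mathscr{C}|\cap O_\tau$ can be as large as $2p$, so ``no purely dimensional support theorem'' applies. But this obstacle is an artifact of looking at the wrong invariant: the first theorem on support \cite[Section III.2]{DemaillyBook1} is phrased in terms of \emph{Cauchy–Riemann} dimension, not real or Hausdorff dimension, and it applies at once. The paper does not stratify by orbits at all. Instead it observes that by Lemma~\ref{Compactness} each $\overline{\text{Log}^{-1}(\sigma)}^X$ is contained in the single affine chart $U_{\text{rec}(\sigma)}$, and that for any $1$-dimensional cone $\rho\in\Sigma$ the intersection
\[
D_\rho \cap \overline{\text{Log}^{-1}\big(\text{aff}(\sigma)\big)}^{U_{\text{rec}(\sigma)}}
\]
is either empty or, via the coordinate description of Section~\ref{SectionToric}, a closed submanifold of $U_{\text{rec}(\sigma)}$ diffeomorphic to $\mathbb{C}^{p'-1}\times(\mathbb{C}^*)^{p-p'}\times(S^1)^{n-p}$, which has CR dimension exactly $p-1$ \emph{regardless} of the codimension of the stratum. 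Since the boundary $X\setminus(\mathbb{C}^*)^n$ is the union of the divisors $D_\rho$, the support of $\mathscr{S}$ is contained in a finite union of such CR submanifolds of CR dimension $p-1 < p$, and a single application of the first theorem on support gives $\mathscr{S}=0$. There is no need for Fourier analysis on the boundary, no need to understand the stars $\mathscr{C}/\tau$, and the low-codimension strata you worried about pose no difficulty once one tracks CR dimension instead of real dimension. As written, your proposal does not close the argument, and the missing ingredient is precisely this CR-dimension computation from Lemma~\ref{Transversal} and the compactness statement Lemma~\ref{Compactness}.
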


\begin{proof}
Write  $D_\rho$ for the torus invariant prime divisor in $X$ corresponding to a $1$-dimensional cone $\rho$ in $\Sigma$. We note that, for any $p$-dimensional rational polyhedron $\sigma$ in $\mathscr{C}$, the subset
\[
D_\rho \cap \overline{\text{Log}^{-1}\big(\text{aff}(\sigma)\big)}^{U_{\text{rec}(\sigma)}} \subseteq U_{\text{rec}(\sigma)}
\] 
is either empty or a closed submanifold of Cauchy-Riemann dimension $p-1$. 
The subset is nonempty if and only if  $\text{rec}(\sigma)$ contains $\rho$, and in this case, for any $b \in \text{aff}(\sigma)$, we have the commutative diagram
\[
\small
\xymatrix{
D_\rho \cap \overline{\text{Log}^{-1}\big(\text{aff}(\sigma)\big)}^{U_{\text{rec}(\sigma)}} \ar[r] \ar[d]^\simeq&U_{\text{rec}(\sigma)} \ar[d]^{\varphi_\sigma^3}\\
\mathbb{C}^{p'-1} \times (\mathbb{C}^*)^{p-p'} \times (S^1)^{n-p} \ar[r]^{\qquad \quad e^{-b}}& \mathbb{C}^{p'} \times (\mathbb{C}^*)^{n-p'}.
}
\]

Let $\mathscr{T}$ be a closed current on $X$ with measure coefficients which has the same dimension and support as $\overline{\mathscr{T}}_\mathscr{C}$.
By Theorem \ref{Main2}, there is a complex number $c$ such that
\[
\mathscr{T}|_{(\mathbb{C}^*)^n}- c \cdot \mathscr{T}_\mathscr{C}=0.
\]
This implies that
\[
|\mathscr{T}-c \cdot \overline{\mathscr{T}}_\mathscr{C}| \subseteq \bigcup_{\rho,\sigma} \Bigg(D_\rho \cap \overline{\text{Log}^{-1}(\sigma)}^{X} \Bigg),
\]
where the union is over all pairs of $1$-dimensional cone $\rho$ in $\Sigma$ and $p$-dimensional cell $\sigma$ in $\mathscr{C}$. By Lemma \ref{Compactness}, we have
\[
|\mathscr{T}-c \cdot \overline{\mathscr{T}}_\mathscr{C}| \subseteq \bigcup_{\rho,\sigma}\Bigg(D_\rho \cap \overline{\text{Log}^{-1}\big(\text{aff}(\sigma)\big)}^{U_{\text{rec}(\sigma)}} \Bigg).
\]
The above commutative diagram shows that the right-hand side is a finite union of submanifolds of Cauchy-Riemann dimension $p-1$:
\[
 \bigcup_{\rho,\sigma}\Bigg(D_\rho \cap \overline{\text{Log}^{-1}\big(\text{aff}(\sigma)\big)}^{U_{\text{rec}(\sigma)}} \Bigg) \simeq  \bigcup_{\rho,\sigma}\Bigg(\mathbb{C}^{p'-1} \times (\mathbb{C}^*)^{p-p'} \times (S^1)^{n-p} \Bigg).
\]
By the first theorem on support \cite[Section III.2]{DemaillyBook1}, this implies
\[
\mathscr{T}-c \cdot \overline{\mathscr{T}}_\mathscr{C}=0.
\]
\end{proof}

\subsection{}

Let $D_1,\ldots,D_p$ be the torus invariant prime divisors in $X$ corresponding to distinct $1$-dimensional cones $\rho_1,\ldots,\rho_p$ in $\Sigma$. 
We fix a positive integer $l \le p$.

\begin{lemma}\label{Transversal}
Let $\sigma$ be a $p$-dimensional rational polyhedron in $\mathbb{R}^n$, $x \in S_{N(\sigma)}$, $b \in \text{aff}(\sigma)$.
\begin{enumerate}[(1)]
\item If $\text{rec}(\sigma) \in \Sigma$,  then $D_1,\ldots,D_{l}$ intersect transversally
 with the smooth subvariety
\[
\overline{ \pi_{\text{aff}(\sigma)}^{-1}(x) }^{U_{\text{rec}(\sigma)}} \subseteq U_{\text{rec}(\sigma)},
\]
and this intersection is nonempty if and only if $\text{rec}(\sigma)$ contains $\rho_1,\ldots,\rho_l$.
\item If $\text{rec}(\sigma) \in \Sigma$ and $\text{rec}(\sigma)$ contains $\rho_1,\ldots,\rho_p$, then 
\[
D_1 \cap \cdots \cap D_p \cap \overline{ \pi_{\text{aff}(\sigma)}^{-1}(x) }^{U_{\text{rec}(\sigma)}}  =\Big\{e^{-b} \cdot x  \cdot z_{\text{rec}(\sigma)}\Big\}. 
\]
\item If $\text{rec}(\sigma) \in \Sigma$ and $\text{rec}(\sigma)$ contains $\rho_1,\ldots,\rho_p$, then  the above intersection point is contained in the relative interior of 
\[
 \overline{ \pi_\sigma^{-1}(x) }^{U_{\text{rec}(\sigma)}} \subseteq \overline{ \pi_{\text{aff}(\sigma)}^{-1}(x) }^{U_{\text{rec}(\sigma)}}.
 \]
 \end{enumerate}
\end{lemma}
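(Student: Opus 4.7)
The plan is to reduce everything to an explicit computation in coordinates on $U_{\text{rec}(\sigma)}$ coming from the commutative diagram that immediately precedes the lemma statement. Since $X$ is smooth and $\text{rec}(\sigma) \in \Sigma$, the cone $\text{rec}(\sigma)$ is unimodular, so we may fix a $\mathbb{Z}$-basis $u_1, \ldots, u_n$ of $\mathbb{Z}^n$ in which the primitive generators $u_1, \ldots, u_{p'}$ of $\text{rec}(\sigma)$ extend first to a $\mathbb{Z}$-basis $u_1, \ldots, u_p$ of $H_\sigma \cap \mathbb{Z}^n$ and then to the full basis. In the induced coordinates $z_1, \ldots, z_n$ on $U_{\text{rec}(\sigma)} \cong \mathbb{C}^{p'} \times (\mathbb{C}^*)^{n - p'}$ we then read off: $z_{\text{rec}(\sigma)} = (0, \ldots, 0, 1, \ldots, 1)$; each $D_{\rho_j}$ with $\rho_j \subseteq \text{rec}(\sigma)$ is the coordinate hyperplane $\{z_{i_j} = 0\}$ for the unique $i_j \le p'$ with $\rho_j = \text{cone}(u_{i_j})$, while $D_{\rho_j} \cap U_{\text{rec}(\sigma)} = \emptyset$ otherwise; and a direct unwinding of the definitions of $\pi_{H_\sigma}$ and $\pi_{\text{aff}(\sigma)} = \pi_{H_\sigma} \circ e^b$ yields
\[
\overline{\pi_{\text{aff}(\sigma)}^{-1}(x)}^{U_{\text{rec}(\sigma)}} = \mathbb{C}^{p'} \times (\mathbb{C}^*)^{p-p'} \times \{y_0\},
\]
where $y_0 \in (\mathbb{C}^*)^{n-p}$ is the point whose $i$-th coordinate equals $x_i e^{-b_i}$.

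Given this description, (1) is immediate: each $D_{\rho_j}$ with $\rho_j \subseteq \text{rec}(\sigma)$ intersects the above coordinate submanifold along the distinct coordinate hyperplane $\{z_{i_j} = 0\}$, so the whole intersection is manifestly transverse, and it becomes empty the instant some $\rho_j \not\subseteq \text{rec}(\sigma)$. For (2), if all $p$ distinct rays $\rho_1, \ldots, \rho_p$ lie in $\text{rec}(\sigma)$, the simpliciality of $\text{rec}(\sigma) \in \Sigma$ forces $p' = p$; the joint intersection with $D_1, \ldots, D_p$ then reduces to the single point $(0, \ldots, 0, y_0)$, which after tracking the identifications is exactly $e^{-b} \cdot x \cdot z_{\text{rec}(\sigma)}$.

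For (3) one must argue that this intersection point lies in the interior of $\overline{\pi_\sigma^{-1}(x)}^{U_{\text{rec}(\sigma)}}$ viewed as a subset of the ambient complex manifold $\overline{\pi_{\text{aff}(\sigma)}^{-1}(x)}^{U_{\text{rec}(\sigma)}}$. In the above coordinates (with $p' = p$) the ambient manifold is a copy of $\mathbb{C}^p$ and the intersection point is its origin. A small neighborhood of the origin corresponds via $l_{\sigma, x}$ to translates $v_0 + \sum_{i=1}^p t_i u_i$ of a fixed base point $v_0 \in \text{aff}(\sigma)$ with each $t_i = -\log|z_i|$ arbitrarily large. Since $\text{rec}(\sigma)$ is full-dimensional in $H_\sigma$, every linear functional $m_\alpha$ defining a codimension $1$ face of $\sigma$ satisfies $m_\alpha \in \text{rec}(\sigma)^\vee \setminus \{0\}$ in $H_\sigma^\vee$, and hence is strictly positive on $\text{rec}(\sigma)^\circ$; taking $t_i > T$ for a uniformly large $T$ therefore places $v_0 + \sum t_i u_i$ in $\sigma^\circ$. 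Perturbing the vanishing coordinates slightly shows that points of the neighborhood with some $z_i = 0$ are limits of points in $\pi_\sigma^{-1}(x)$, so a full neighborhood of the origin in $\mathbb{C}^p$ is contained in $\overline{\pi_\sigma^{-1}(x)}^{U_{\text{rec}(\sigma)}}$, which is the desired relative interior assertion.

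The main obstacle is bookkeeping: one has to verify that the three isomorphisms $\varphi_\sigma^1, \varphi_\sigma^2, \varphi_\sigma^3$ of the diagram are simultaneously compatible with the chosen basis of $\mathbb{Z}^n$, with the $(\mathbb{C}^*)^n$-action on $U_{\text{rec}(\sigma)}$, and with the translation by $e^{-b}$ appearing in the definition of $\pi_{\text{aff}(\sigma)}$, so that all three coordinate descriptions of $D_{\rho_j}$, $z_{\text{rec}(\sigma)}$, and $\overline{\pi_{\text{aff}(\sigma)}^{-1}(x)}^{U_{\text{rec}(\sigma)}}$ hold at once. Once that is pinned down, each of (1), (2), and (3) collapses to a short calculation with coordinate hyperplanes and polyhedral geometry.
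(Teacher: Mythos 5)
Your proposal is correct and follows the same coordinate-based strategy as the paper, working off the commutative diagram preceding the lemma. The one real difference is that the paper first reduces to the special case $x=1$ and $0\in\sigma$ (so that $\text{aff}(\sigma)=H_\sigma$ and $\text{rec}(\sigma)\subseteq\sigma$), which removes the translation bookkeeping you flag as the main obstacle; you instead carry $e^{-b}\cdot x$ through the computation, which is fine but heavier. For part (3) this divergence matters: after its reduction, the paper simply notes that the intersection point $z_{\text{rec}(\sigma)}$ lies in the interior of $\overline{\pi_{\text{rec}(\sigma)}^{-1}(1)}\simeq\overline{\mathbb{D}}^{p}$, and since $\text{rec}(\sigma)\subseteq\sigma$ gives $\overline{\pi_{\text{rec}(\sigma)}^{-1}(1)}\subseteq\overline{\pi_\sigma^{-1}(1)}$, the point is in the relative interior of the latter; you instead argue directly with the defining linear functionals of $\sigma$ that a full polydisc neighbourhood of the origin maps (under $l_{\sigma,x}$, off the coordinate hyperplanes) into $\sigma^{\circ}$. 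Your argument is correct --- the key point that each $m_\alpha\neq 0$ on $H_\sigma$ is strictly positive on $\text{rec}(\sigma)^{\circ}$ does use $p'=p$, which you have --- but the paper's containment argument is shorter and avoids the need to perturb vanishing coordinates and take closures. If you adopt the paper's initial normalization, your parts (1) and (2) become identical to its proof and part (3) reduces to a one-line inclusion.
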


\begin{proof}
It is enough to prove the assertions when $x$ is the identity and $\sigma$ contains the origin. 
In this case, we have $\text{aff}(\sigma)=H_\sigma$ and $\text{rec}(\sigma) \subseteq \sigma$. 
If $\text{rec}(\sigma) \in \Sigma$, then $\text{rec}(\sigma)$ is unimodular, and there is a commutative diagram
\[
\small
\xymatrix{
\overline{\pi^{-1}_{\text{aff}(\sigma)}(1)}^{U_{\text{rec}(\sigma)}} \ar[d]^{\varphi^2_\sigma} \ar[r]& U_{\text{rec}(\sigma)} \ar[d]^{\varphi_\sigma^3}\\
\mathbb{C}^{p'}\times (\mathbb{C}^*)^{p-p'} \ar[r]&\mathbb{C}^{p'}\times (\mathbb{C}^*)^{n-p'}.
}
\]
If $\text{rec}(\sigma)$ does not contain $\rho_i$, then $D_i$ is disjoint from $U_{\text{rec}(\sigma)}$.
If  $\text{rec}(\sigma)$ contains $\rho_1,\ldots,\rho_l$, then 
\[
D_1 \cap \cdots \cap D_l \cap \overline{ \pi_{\text{aff}(\sigma)}^{-1}(x) }^{U_{\text{rec}(\sigma)}}   \simeq \mathbb{C}^{p'-l}\times (\mathbb{C}^*)^{p-p'}.
\]
If furthermore $l=p$, then $N(\sigma)=N(\text{rec}(\sigma))$, and the above intersection is the single point 
\[
\Big\{z_{\text{rec}(\sigma)} \Big\}\simeq \Big\{0_{\mathbb{C}^p} \times 1_{(\mathbb{C}^*)^{n-p}}\Big\}.
\]
This point is contained in the relative interior of
\[
\Bigg(\overline{\pi^{-1}_{\text{rec}(\sigma)}(1)}^{U_{\text{rec}(\sigma)}} \subseteq \overline{\pi^{-1}_{\text{aff}(\sigma)}(1)}^{U_{\text{rec}(\sigma)}} \Bigg) \simeq \Bigg(\overline{\mathbb{D}}^p \subseteq \mathbb{C}^p \Bigg),
\]
Since $\text{rec}(\sigma) \subseteq \sigma$, the point is contained in the relative interior of
\[
 \overline{ \pi_\sigma^{-1}(1) }^{U_{\text{rec}(\sigma)}} \subseteq \overline{ \pi_{\text{aff}(\sigma)}^{-1}(1) }^{U_{\text{rec}(\sigma)}}.
 \]
\end{proof}

The wedge product between positive closed currents will play an important role in the proof of Theorem \ref{Main3}. 
We briefly review the definition here, referring \cite{DemaillyBook1} and \cite{Dinh-Sibony} for details.
Write $d=\partial+\overline\partial$ for the usual decomposition of the exterior derivative on $X$, and set
\[
d^c:=\frac{1}{2\pi i}\big(\partial-\overline\partial\big).
\]
Let $u$ be a plurisubharmonic function on an open subset $U\subseteq X$, and let $\mathscr{T}$ be a positive closed current on $U$.
Since $\mathscr{T}$ has measure coefficients, $u\mathscr{T}$ is a well-defined current on $U$ if $u$ is locally integrable with respect to  $\text{tr}(\mathscr{T})$. In this case, we define
\[
dd^c(u) \wedge \mathscr{T} := dd^c(u\mathscr{T}).
\]
The wedge product is a positive closed current on $U$, and it vanishes identically when $u$ is pluriharmonic. 

Let $\mathscr{D}$ be a positive closed current on $U$ of degree $(1,1)$.
We define $\mathscr{D} \wedge \mathscr{T}$ as above, using open subsets $U_i \subseteq U$ covering $U$ and plurisubharmonic functions $u_i$ on $U_i$ satisfying
\[
 \mathscr{D}|_{U_i}=dd^cu_i.
\]
The wedge product does not depend on the choice of the open covering and local potentials, and it extends linearly to the case when $\mathscr{D}$ is \emph{almost positive}, that is, when $\mathscr{D}$ can be written as the sum of a positive closed current and a smooth current. If $\mathscr{D}_1,\ldots,\mathscr{D}_l$ are almost positive closed current on $U$ of degree $(1,1)$ satisfying the integrability condition, we define
\[
\mathscr{D}_1 \wedge \mathscr{D}_2 \wedge \ldots \wedge  \mathscr{D}_l \wedge \mathscr{T}:=\mathscr{D}_1 \wedge \Big( \mathscr{D}_2 \wedge \ldots \wedge  (\mathscr{D}_l \wedge \mathscr{T}) \Big).
\]

Let $\mathcal{C}$ be a $p$-dimensional tropical variety compatible with $\Sigma$, and let $\rho$ be a ray of $\Sigma$.
For each $p$-dimensional cell $\sigma$ of $\mathcal{C}$ whose recession cone contains $\rho$, we set
\[
\text{w}_{\text{star}(\rho,\mathcal{C})}(\overline{\sigma})=\text{w}_\mathcal{C}(\sigma),
\]
where $\overline{\sigma}$ is the image of $\sigma$ in the quotient space $N(\rho)_\mathbb{R}$.
This defines a $(p-1)$-dimensional tropical variety 
\[
\text{star}(\rho,\mathcal{C}) \subseteq N(\rho)_\mathbb{R},
\]
 whose $(p-1)$-dimensional cones correspond to 
$p$-dimensional cones of $\mathscr{C}$ whose recession cone contains $\rho$. 
For any $\sigma$ as above, the facets of $\overline{\sigma}$ are the images of the facets of $\sigma$ whose recession cone contains $\rho$,
and therefore the balancing condition for $\text{star}(\rho,\mathcal{C})$ follows from the balancing condition for $\mathscr{C}$.
The notation ``$\text{star}$'' is motivated by the important special case when $\mathcal{C}=\text{rec}(\mathcal{C})$.

\begin{proposition}\label{Wedge}
If $D_\rho$ is the torus invariant divisor of $X$ corresponding to a ray $\rho$ of $\Sigma$, then
\[
D_\rho \wedge \overline{\mathscr{T}}_\mathcal{C} = \overline{\mathscr{T}}_{\text{star}(\rho,\mathcal{C})}.
\]
\end{proposition}

Proposition \ref{Wedge} leads to an explicit description of the $0$-dimensional current $[D_{\rho_1}] \wedge \ldots \wedge [D_{\rho_p}] \wedge \overline{\mathscr{T}}_\mathscr{C}$
for distinct torus invariant divisors $D_{\rho_i}$.
For a $p$-dimensional rational polyhedron $\sigma$ compatible with $\Sigma$, we write $\mu_\sigma$ be the normalized Haar measure on $S_{N(\sigma)}$.
If $\text{rec}(\sigma)$ is $p$-dimensional, we define a closed embedding 
\[
\iota_\sigma: S_{N(\sigma)} \longrightarrow X, \qquad t \longmapsto e^{-b} \cdot  t \cdot z_{\text{rec}(\sigma)}, \qquad b \in \text{aff}(\sigma),
\]
which does not depend on the choice of $b$.
Repeated application of Proposition \ref{Wedge} gives
\[
[D_{\rho_1}] \wedge \ldots \wedge [D_{\rho_p}] \wedge \overline{\mathscr{T}}_\mathscr{C}= \sum_\sigma \text{w}_\mathscr{C}(\sigma) \iota_{\sigma*} (d\mu_\sigma),
\]
where the sum is over all $p$-dimensional cells $\sigma$ in $\mathscr{C}$ such that $\text{rec}(\sigma)=\text{cone}(\rho_1,\ldots,\rho_p)$.

\begin{proof}
We first note that the support of $D_\rho \wedge \overline{\mathscr{T}}_\mathcal{C}$ is contained in $D_\rho$. Indeed, we have
\[
dd^c\Big( \text{log} |f| \ \overline{\mathscr{T}}_\mathcal{C}|_U\Big)=dd^c\Big(\text{log} |f|\Big)\wedge \overline{\mathscr{T}}_\mathcal{C}|_U=0 \wedge \overline{\mathscr{T}}_\mathcal{C}|_U=0
\]
for any nonvanishing holomorphic function $f$ on an open subset $U \subseteq X$.

Let $\sigma$ be a $p$-dimensional cone of $\mathcal{C}$.
If the recession cone of $\sigma$ contains $\rho$, then there is a natural isomorphism
\[
S_{N(\sigma)} \simeq S_{N(\overline{\sigma})}.
\]
Using the above identification, one can check in toric  local coordinates for $U_{\text{rec}(\sigma)}$ in Section \ref{ToricLocalCoordinates} that, for any $x$ in $S_{N(\sigma)}$,
\[
D_\rho \cap \overline{\pi_\sigma^{-1}(x)}=\left\{\begin{array}{cl}\overline{\pi^{-1}_{\overline{\sigma}}(x)} & \text{if the recession cone of $\sigma$ contains $\rho$,} \\ \varnothing & \text{if the recession cone of $\sigma$ does not contain $\rho$.}\end{array} \right.
\]
Suppose that the recession cone of $\sigma$ contains $\rho$. 
If $f_\rho$ is a defining equation of $D_\rho$ on an open subset $U \subseteq X$,
an application of the Poincar\'e-Lelong formula shows that
\[
dd^c\Big(\text{log}|f_\rho| \ [\overline{\pi_\sigma^{-1}(x)}]|_U\Big)=[\overline{\pi_{\overline{\sigma}}^{-1}(x)}]|_U+\mathscr{R}_{\sigma}(x),
\]
where $\mathscr{R}_\sigma(x)$ is a current whose support is contained in the boundary of  $\overline{\pi_\sigma^{-1}(x)}$.
It follows that
\[
dd^c\Big(\text{log}|f_\rho|\ \overline{\mathscr{T}}_\sigma|_U\Big)=\overline{\mathscr{T}}_{\overline{\sigma}}|_U+\mathscr{R}_\sigma,
\]
where $\mathscr{R}_\sigma$ is a current whose support is contained in the boundary of $\overline{\text{Log}^{-1}(\sigma)}$.
Therefore, the support of the closed current
\[
D_\rho \wedge \overline{\mathscr{T}}_\mathcal{C} - \overline{\mathscr{T}}_{\text{star}(\rho,\mathcal{C})}
\]
 is contained in the intersection of $D_{\rho}$ and the piecewise smooth manifold
 $\bigcup_\sigma\  \partial\ \overline{\text{Log}^{-1}(\sigma)}$,
 where the union is over all $p$-dimensional cells of $\mathcal{C}$ whose recession contains $\rho$.
 The intersection in question is the union of closed submanifolds of Cauchy-Riemann dimension $p-2$, and hence
\[
D_\rho \wedge \overline{\mathscr{T}}_\mathcal{C} - \overline{\mathscr{T}}_{\text{star}(\rho,\mathcal{C})}=0.
\]
\end{proof}

We illustrate the argument in coordinates in the  representative case when $\sigma$ is a cone containing $\rho$. 
Consider the toric coordinate system on  $U_\sigma$ with
\[
U_\sigma \simeq \mathbb{C} \times \mathbb{C}^{p-1} \times (\mathbb{C}^*)^{n-p} \quad \text{and} \quad D_\rho |_{U_\sigma} \simeq 0 \times \mathbb{C}^{p-1} \times  (\mathbb{C}^*)^{n-p}.
\]
Writing $\overline{\mathbb{D}}$ for the closed unit disc in $\mathbb{C}$, we have
\[
\overline{\pi^{-1}_{\sigma}(x)} \simeq  \overline{\mathbb{D}}^p \times x \quad  \text{and} \quad
\overline{\text{Log}^{-1}(\sigma)} \simeq  \overline{\mathbb{D}}^p \times (S^1)^{n-p}.
\]
The boundary of the latter has $p$ components of the form
\[
\overline{\mathbb{D}} \times \cdots \times \overline{\mathbb{D}} \times S^1 \times \overline{\mathbb{D}} \times \cdots \times \overline{\mathbb{D}} \times (S^1)^{n-p},
\]
whose intersection with $D_\rho$ has Cauchy-Riemann dimension $p-2$.
Therefore the intersection cannot support any normal closed current of dimension $(p-1,p-1)$.
 
\subsection{}

We begin the proof of Theorem \ref{Main3}. Fix a positive integer $l \le p$,  and let
\begin{eqnarray*}
\rho_1,\ldots,\rho_p &:=& \text{distinct $1$-dimensional cones in $\Sigma$},\\
D_1,\ldots,D_p&:=& \text{torus invariant divisors of $\rho_1,\ldots,\rho_p$},\\
L_1,\ldots,L_p&:=& \text{hermitian line bundles on $X$ corresponding to $D_1,\ldots,D_p$},\\
w_1,\ldots,w_p&:=& \text{Chern forms of the line bundles $L_1,\ldots,L_p$}.
\end{eqnarray*}
If  $s_i$ is a holomorphic section of $\mathcal{O}_X(D_i)$ that defines $D_i$, then
the Poincar\'e-Lelong formula says that 
\[
dd^c \log|s_i|=[D_i]-w_i.
\]

\begin{proposition}\label{CohomologyClass}
If $\mathscr{C}$ is a $p$-dimensional tropical variety compatible with $\Sigma$, then
\[
\Big\{[D_1] \wedge \ldots \wedge [D_l] \wedge \overline{\mathscr{T}}_\mathscr{C}\Big\}=\{w_1\} \wedge \ldots \wedge \{w_l\} \wedge \big\{\overline{\mathscr{T}}_\mathscr{C}\big\}.
\]
\end{proposition}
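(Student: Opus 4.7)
The natural approach is induction on $l$, using the Poincar\'e-Lelong formula $[D_i] = w_i + dd^c \log|s_i|$ to successively replace currents of integration by their smooth Chern form representatives. Set
\[
\mathscr{R}_{k} := [D_1] \wedge \cdots \wedge [D_k] \wedge \overline{\mathscr{T}}_\mathscr{C}, \quad \mathscr{R}_0 := \overline{\mathscr{T}}_\mathscr{C},
\]
which, by Proposition \ref{Wedge}, is a well-defined strongly positive closed current on $X$ for each $0 \le k \le l$. The base case $l=0$ is trivial, so assume the conclusion holds for $l-1$.

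Write $[D_l] = w_l + dd^c \log|s_l|$ by Poincar\'e-Lelong. By the definition of the wedge product recalled before Proposition \ref{Wedge}, combined with the fact that $\mathscr{R}_{l-1}$ is closed and $w_l$ is smooth,
\[
\mathscr{R}_l = [D_l] \wedge \mathscr{R}_{l-1} = w_l \wedge \mathscr{R}_{l-1} + dd^c\bigl(\log|s_l| \cdot \mathscr{R}_{l-1}\bigr),
\]
where the last identity uses that $\log|s_l|$ is locally integrable with respect to $\text{tr}(\mathscr{R}_{l-1})$, which is precisely the hypothesis under which $[D_l] \wedge \mathscr{R}_{l-1}$ is defined in the first place (guaranteed by Proposition \ref{Wedge}). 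Hence the second summand above is $d$-exact as a current on $X$, and passing to cohomology yields
\[
\{\mathscr{R}_l\} = \{w_l \wedge \mathscr{R}_{l-1}\} = \{w_l\} \wedge \{\mathscr{R}_{l-1}\},
\]
the last equality being the standard fact that wedging a closed current with a smooth closed form descends to the cup product in cohomology.

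Invoking the inductive hypothesis $\{\mathscr{R}_{l-1}\} = \{w_1\} \wedge \cdots \wedge \{w_{l-1}\} \wedge \{\overline{\mathscr{T}}_\mathscr{C}\}$ then finishes the proof. The only subtlety I expect to have to address carefully is the justification of the identity $dd^c \log|s_l| \wedge \mathscr{R}_{l-1} = dd^c(\log|s_l| \cdot \mathscr{R}_{l-1})$: this requires the local integrability of $\log|s_l|$ against the trace measure of $\mathscr{R}_{l-1}$, which one verifies by invoking the geometric description of $\mathscr{R}_{l-1}$ from the proof of Proposition \ref{Wedge}, namely that on each chart $U_{\text{rec}(\sigma)}$ the current $\mathscr{R}_{l-1}$ is an average of integration currents along smooth subvarieties transverse to $D_l$ (Lemma \ref{Transversal}), so $\log|s_l|$ restricts to a locally integrable function on these subvarieties. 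Everything else is a formal consequence of the Poincar\'e-Lelong formula and the multilinearity of the wedge product on almost-positive closed currents.
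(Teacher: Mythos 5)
Your proof is correct, and it takes a genuinely different route from the paper's. The paper's proof of Proposition~\ref{CohomologyClass} establishes the general fact that $\{\mathscr{D}\wedge\mathscr{T}\}=\{w\}\wedge\{\mathscr{T}\}$ (for $\mathscr{D}=w+dd^c u$ with $u$ integrable against $\text{tr}(\mathscr{T})$) by invoking Demailly's regularization theorem to produce smooth $u_j \downarrow u$ with controlled positivity $\psi+dd^c u_j\ge 0$, then applying the Dinh--Sibony monotone convergence theorem for wedge products on a finite chart cover of $X$, and finally using continuity of the cohomology class assignment to pass to the limit. Your argument bypasses all of that: you decompose $[D_l]\wedge\mathscr{R}_{l-1}=w_l\wedge\mathscr{R}_{l-1}+dd^c(\log|s_l|\cdot\mathscr{R}_{l-1})$ (valid since $\mathscr{R}_{l-1}$ is closed, so the product rule kills all the cross terms for the smooth part), and simply note that $dd^c(\log|s_l|\cdot\mathscr{R}_{l-1})=d\bigl(d^c(\log|s_l|\cdot\mathscr{R}_{l-1})\bigr)$ is $d$-exact as a current, hence cohomologically trivial. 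The key input in both proofs is identical --- the local integrability of the potential against $\text{tr}(\mathscr{R}_{l-1})$, guaranteed by the transversality in Lemma~\ref{Transversal} and encoded in Proposition~\ref{Wedge} --- but your argument trades the regularization and monotone-convergence machinery for the elementary observation that $dd^c$ of any current is $d$-exact. The paper's route is heavier but perhaps more robust in spirit (regularization is the standard tool when one worries about manipulating $dd^c$ of nonsmooth potentials), while yours is cleaner and shorter. Both buy the same conclusion; I see no gap in yours.
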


\begin{proof}
The statement follows from repeated application of the following general fact. Let $\mathscr{T}$ be a positive closed current on $X$, and let $\mathscr{D}$ be a positive closed $(1,1)$-current on $X$. We write
\[
\mathscr{D}=w+dd^cu,
\] 
where $w$ is a smooth $(1,1)$-form and $u$ is an \emph{almost plurisubharmonic function}, a function
that is locally equal to the sum of a smooth function and a plurisubharmonic function.
The general fact to be applied is:
\[
\text{If $u$ is locally integrable with respect to $\text{tr}(\mathscr{T})$, then  $\{\mathscr{D} \wedge \mathscr{T}\}=\{w\} \wedge \{\mathscr{T}\}$.}
\]
To see this, we use Demailly's regularization theorem \cite{DemaillyRegularization} to construct a sequence of smooth functions $u_j$ decreasing to $u$ and a smooth positive closed $(1,1)$-form $\psi$ such that
 \[
 \psi + dd^c u_j \ge 0.
 \]
Choose open subsets $U_i \subseteq X$ covering $X$ and plurisubharmonic functions $\varphi_i$ on $U_i$ such that
\[
\psi|_{U_i}=dd^c \varphi_i.
\]
Then $\varphi_i+(u_j|_{U_i})$ is a sequence of plurisubharmonic functions on $U_i$ decreasing to $\varphi_i+(u|_{U_i})$.
By the monotone convergence theorem for wedge products \cite{Dinh-Sibony}, we have
\[
\lim_{j \to \infty} (\psi+dd^c u_j)|_{U_i} \wedge \mathscr{T}|_{U_i}=(\psi+dd^c u)|_{U_i} \wedge \mathscr{T}|_{U_i}.
\]
Since $X$ is compact, the open covering of $X$ can be taken to be finite, and hence 
\[
\lim_{j \to \infty} (\psi+dd^c u_j) \wedge \mathscr{T}=(\psi+dd^c u) \wedge \mathscr{T}.
\]
By continuity of the cohomology class assignment, this implies
\[
\lim_{j \to \infty} \Big\{(w+dd^c u_j) \wedge \mathscr{T}\Big\}=\Big\{(w+dd^c u) \wedge \mathscr{T} \Big\}=\{ \mathscr{D} \wedge \mathscr{T}\}.
\]
Since $w+dd^c u_j$ is smooth and $dd^c u_j$ is exact, the left-hand side is equal to
\[
\lim_{j \to \infty} \big\{w+dd^c u_j\big\} \wedge \{\mathscr{T}\}=\{w\} \wedge \{\mathscr{T}\}.
\]
\end{proof}

\begin{proof}[Proof of Theorem \ref{Main3}]
Suppose $\rho_1,\ldots,\rho_p$ generates a $p$-dimensional cone $\gamma$ in $\Sigma$. Since $X$ is smooth, every $p$-dimensional cone of $\Sigma$ is of this form.
The torus orbit closure $V(\gamma) \subseteq X$ corresponding to $\gamma$ is the transversal intersection of $D_1,\ldots,D_p$,
and its fundamental class is Poincar\'e dual to $\{w_1\} \wedge \ldots \wedge \{w_p\}$.
We show that
\[
 \langle \overline{\mathscr{T}}_\mathscr{C},w_1 \wedge \ldots \wedge w_p\rangle= \int_X w_1 \wedge \ldots \wedge w_p \wedge \overline{\mathscr{T}}_\mathscr{C}=\text{w}_{\text{rec}(\mathscr{C})}(\gamma).
 \]
By Proposition \ref{CohomologyClass}, we have
\[
\int_X w_1 \wedge \ldots \wedge w_p \wedge \overline{\mathscr{T}}_\mathscr{C}=\int_X [D_1] \wedge \ldots [D_p] \wedge \overline{\mathscr{T}}_\mathscr{C}.
\]
By Proposition \ref{Wedge}, the right-hand side is 
\[
\int_X [D_1] \wedge \ldots [D_p] \wedge \overline{\mathscr{T}}_\mathscr{C}=\int_X \Bigg(\sum_\sigma \text{w}_\mathscr{C}(\sigma)\iota_{\sigma *}(d\mu_\sigma)\Bigg)=\sum_\sigma \text{w}_\mathscr{C}(\sigma),
 \]
where the sum is over all $p$-dimensional cells $\sigma$ in $\mathscr{C}$ such that $\text{rec}(\sigma)=\gamma$. Note that the sum is, by definition of the recession of $\mathscr{C}$, the weight $\text{w}_{\text{rec}(\mathscr{C})}(\gamma)$. Since the above computation is valid for each $p$-dimensional cone $\gamma$ in $\Sigma$,
 we have
 \[
\{\overline{\mathscr{T}}_\mathscr{C}\}=\text{rec}(\mathscr{C}) \in H^{q,q}(X).
\]
\end{proof}

\section{The strongly positive Hodge conjecture}\label{SectionLast}

\subsection{}

This section is devoted to the construction of the following example.

\begin{theorem}\label{Main4}
There is a $4$-dimensional smooth projective variety $X$ and a $(2,2)$-dimensional strongly positive closed current $\mathscr{T}$ on $X$ with the following properties:
\begin{enumerate}[(1)]
\item The cohomology class of $\mathscr{T}$ satisfies
\[
\{\mathscr{T}\} \in H^4(X,\mathbb{Z}) \cap H^{2,2}(X,\mathbb{C}).
\]
\item The current $\mathscr{T}$ is not a weak limit of the form
\[
\lim_{i \to \infty} \mathscr{T}_i, \quad \mathscr{T}_i=\sum_j \lambda_{ij} [Z_{ij}],
\]
where $\lambda_{ij}$ are positive real numbers and $Z_{ij}$ are irreducible surfaces in $X$.
\end{enumerate}
\end{theorem}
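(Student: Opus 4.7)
The plan is to realize $\mathscr{T}$ as the trivial extension $\overline{\mathscr{T}}_{\mathscr{C}}$ of a tropical current associated to a carefully constructed $2$-dimensional tropical variety $\mathscr{C} \subseteq \mathbb{R}^4$, compatible with the fan $\Sigma$ of a smooth projective toric $4$-fold $X$. By Theorem \ref{Main1} the current $\mathscr{T}_{\mathscr{C}}$ is strongly positive and closed, and by Theorem \ref{Main3} the cohomology class $\{\overline{\mathscr{T}}_{\mathscr{C}}\}$ is the class of the balanced weighted fan $\text{rec}(\mathscr{C})$; integrality of the recession weights gives condition (1) automatically. For condition (2), I would arrange $\mathscr{C}$ to be non-degenerate and strongly extremal. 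The most convenient sufficient criterion is Proposition \ref{LocalGlobalExtremality}: local extremality together with connectedness in codimension $1$. Theorem \ref{Main2} then promotes $\mathscr{T}_{\mathscr{C}}$ to a strongly extremal current on $(\mathbb{C}^*)^4$, and Proposition \ref{ToricExtremality} transports strong extremality across the compactification, so $\overline{\mathscr{T}}_{\mathscr{C}}$ generates an extremal ray of the cone of strongly positive closed currents on $X$.

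With strong extremality in hand, I would apply Milman's converse to the Krein--Milman theorem (Proposition \ref{Milman'sConverse-Intro}): if $\overline{\mathscr{T}}_{\mathscr{C}}$ were a weak limit of currents of the form $\sum_j \lambda_{ij}[Z_{ij}]$ with $\lambda_{ij} \ge 0$ and $Z_{ij}$ irreducible surfaces, extremality would force it to be a weak limit $\lim_i \lambda_i [Z_i]$ along \emph{single} irreducible surfaces $Z_i \subseteq X$. So it suffices to rule out this stronger mode of approximation.

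The obstruction comes from the tropical Laplacian. Form the graph $G = G(\mathscr{C}) \subseteq \mathbb{R}^4 \setminus \{0\}$ whose vertices are the rays of $\text{rec}(\mathscr{C})$ and whose edges record the $2$-dimensional cones, with edge weights given by the balanced weights; this yields the real symmetric matrix $L_G$ defined in the introduction. The key claim (announced as Proposition \ref{OneNegativeEigenvalue}) is that, if $\overline{\mathscr{T}}_{\mathscr{C}} = \lim_i \lambda_i[Z_i]$ with $Z_i$ irreducible surfaces and $\lambda_i > 0$, then $L_G$ has at most one negative eigenvalue. The argument: identify $L_G$, up to sign conventions, with the intersection pairing on the span of the classes $\{V(\gamma)\}$ dual to the rays of $\text{rec}(\mathscr{C})$ in $H^{2,2}(X)$; the Hodge index theorem bounds the signature of this pairing on classes represented by irreducible surfaces to one negative direction; continuity of the cohomology class assignment (built from Theorem \ref{Main3}) then passes the bound to the weak limit $\overline{\mathscr{T}}_{\mathscr{C}}$.

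It remains to exhibit a concrete $\mathscr{C}$ whose tropical Laplacian has at least two negative eigenvalues while satisfying all the structural hypotheses above (non-degeneracy, local extremality, codimension-$1$ connectedness, compatibility with a smooth projective fan). The plan is to start from a symmetric geometric realization of the complete bipartite graph $K_{4,4}$ in $\mathbb{R}^4$, for which balancing and local extremality are immediate, and then iteratively apply the two local surgery operations $F \mapsto F^+_{ij}$ and $F \mapsto F^-_{ij}$ of Sections \ref{PlusConstruction}--\ref{MinusConstruction}. At each step one checks that the three combinatorial properties are preserved, while tracking the spectrum of $L_G$. \textbf{The main obstacle is this last step:} the surgeries alter the combinatorics of the fan and the spectrum of $L_G$ in a coupled way, so producing a second negative eigenvalue without destroying strong extremality (or the existence of a smooth projective compactification) requires a delicate, hand-guided search. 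The Hodge-index input in the previous paragraph is the other nontrivial ingredient, but its strategy, passing from effective approximating classes to a limit using semicontinuity of the signature, is standard once the dictionary between $L_G$ and the intersection form on $X$ is fixed.
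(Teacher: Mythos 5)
Your proposal matches the paper's argument essentially step by step: trivial extension $\overline{\mathscr{T}}_{\mathscr{C}}$ of a tropical current on a compatible smooth projective toric $X$, Theorem \ref{Main3} for the cohomology class (with Proposition \ref{ToricClosed} supplying closedness of the extension), strong extremality via Theorem \ref{Main2} and Proposition \ref{ToricExtremality}, Milman's converse to reduce to approximation by single irreducible surfaces, and the tropical Laplacian plus Hodge index obstruction (Proposition \ref{OneNegativeEigenvalue}), with the example built from $K_{4,4}$ and the two surgeries. Two refinements: in your sketch of Proposition \ref{OneNegativeEigenvalue} the Hodge index theorem is applied on a resolution $\widetilde{Z}$ of the surface $Z$ --- after computing $w_{ij}=D_i\cdot D_j\cdot \mathrm{cl}(Z)$ and $d_i=-D_i\cdot D_i\cdot \mathrm{cl}(Z)$ the tropical Laplacian is the negative of the intersection form on $\mathrm{NS}^1_{\mathbb{R}}(\widetilde{Z})$ pulled back along the toric divisor classes, not a pairing on a subspace of $H^{2,2}(X)$; and the ``hand-guided search'' you anticipate is tamer than you fear, since Propositions \ref{PositiveSignature} and \ref{MinusSignature} track the signature exactly under the surgeries, the orthogonal-column condition on the vertex matrix $M$ forces a balanced $K_{4,4}$ weighting with a few negative edges, each $F\mapsto F^-_{ij}$ on a negative edge removes that negative weight while contributing exactly one new negative eigenvalue, and the $F\mapsto F^+_{ij}$ refinements needed for compatibility with a smooth projective fan preserve the negative index.
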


The above $X$ and $\mathscr{T}$ have other notable properties: $X$ is a toric variety, $\mathscr{T}$ is strongly extremal, and $\{\mathscr{T}\}$ generates an extremal ray of the nef cone of $X$ (the dual of the effective cone of complementary dimension with respect to the Poincar\'e pairing). 
It follows from \cite[Corollary 4.6]{Fulton-Sturmfels}
that any nef class in a smooth complete toric variety is effective, and hence there are nonnegative integers $\lambda_j$ and irreducible surfaces $Z_j\subseteq X$ such that
\[
\big\{\mathscr{T}\big\}=\sum_j \lambda_j \big\{[Z_j]\big\}.
\]
This example shows that, in general, $\textrm{HC}^+$ is not true and not implied by $\textrm{HC}'$. 

\begin{remark}
Assume that $\mathscr{C}$ is a strongly extremal tropical variety which is approximable as a set by logarithmic limit sets of a family of closed algebraic subvarieties of $(\mathbb{C}^*)^n.$ Then by \cite[Theorem 5.2.7]{Babaee}, there are closed subvarieties $Z_i \subseteq (\mathbb{C}^*)^n$, and positive real numbers $\lambda_i$ such that 
\[
\mathscr{T}_{\mathscr{C}}=\lim_{i \to \infty} \lambda_i [Z_i].
\]
Therefore, non-approximability of the tropical current $\mathscr{T}= \overline{\mathscr{T}}_{\mathscr{C}}$ in  Theorem \ref{Main4}, implies that there is no family of algebraic subvarieties of $(\mathbb{C}^*)^n,$ whose logarithmic limit sets approximate its underlying tropical variety $\mathscr{C}$ as a set. 

\end{remark}


\subsection{}

Let $G$ be an edge-weighted geometric graph in $\mathbb{R}^n \setminus \{0\}$,
 that is, an edge-weighted graph whose vertices are nonzero vectors in $\mathbb{R}^n$ and edges are line segments in $\mathbb{R}^n \setminus \{0\}$. We suppose that all the edge-weights are real numbers.
We  write 
\begin{eqnarray*}
u_1,u_2,\ldots &:=& \text{the vertices of $G$},\\
u_iu_j &:=& \text{the edge connecting $u_i$ and $u_j$},\\
w_{ij} &:=& \text{the weight on the edge $u_iu_j$}.
\end{eqnarray*}
We say that an edge $u_iu_j$  is \emph{positive} or \emph{negative} according to the sign of the weight $w_{ij}$.

\begin{definition}
An edge-weighted geometric graph $G \subseteq \mathbb{R}^n \setminus \{0\}$ satisfies the \emph{balancing condition} at its vertex $u_i$ if there is a real number $d_i$ such that
\[
d_iu_i=\sum_{u_i \sim u_j} w_{ij} u_j,
\]
where the sum is over all neighbors of $u_i$ in $G$. The graph $G$ is \emph{balanced} if it satisfies the balancing condition at each of its vertices.
\end{definition}

The real numbers $d_i$ are uniquely determined by $G$ because all the vertices of $G$ are nonzero.
When $G$ is balanced, we define the \emph{tropical Laplacian} of $G$ to be the real symmetric matrix
$L_G$ with entries
\[
(L_G)_{ij}:=\left\{\begin{array}{cl} d_i & \text{if $u_i=u_j$,} \\ -w_{ij} & \text{if $u_i \sim u_j$,} \\ 0 & \text{if otherwise,}\end{array}\right.
\]
where the diagonal entries $d_i$ are the real numbers satisfying
\[
d_i u_i = \sum_{u_i \sim u_j} w_{ij} u_j.
\]
The tropical Laplacian of $G$ has a combinatorial part and a geometric part:
\[
L_G=L(G)-D(G).
\]
The combinatorial part $L(G)$ is the combinatorial Laplacian of the abstract graph of $G$ as defined in \cite{Chung}, and the geometric part $D(G)$ is a diagonal matrix that depends on the position of the vertices of $G$.

\begin{definition}
When $G$ is balanced, we define the \emph{signature} of $G$ to be the triple
\begin{eqnarray*}
n_+(G) &:=& \text{the positive index of inertia of $L_G$},\\
n_-(G) &:=& \text{the negative index of inertia of $L_G$},\\
n_0(G)\ &:=& \text{the corank of $L_G$}.
\end{eqnarray*}
\end{definition}

Let $F$ be a $2$-dimensional real weighted fan in $\mathbb{R}^n$,
that is, a $2$-dimensional weighted complex all of whose $2$-dimensional cells are cones with real weights.
We define an edge-weighted geometric graph $G(F) \subseteq \mathbb{R}^n \setminus \{0\}$ as follows:
\begin{enumerate}[(1)]
\item The set of vertices of $G(F)$ is 
\[
\big\{u_i \mid \text{$u_i$ is a primitive generator of a $1$-dimensional cone in $F$}\big\}.
\]
\item The set of edges of $G(F)$ is
\[
\big\{u_iu_j \mid \text{the cone over $u_iu_j$ is a $2$-dimensional cone in $F$ with nonzero weight}\big\}.
\]
\item The weights on the edges of $G(F)$ are 
\[
w_{ij}:=\text{w}_F\big(\text{cone}(u_iu_j)\big).
\]
\end{enumerate}

We say that a weighted fan is \emph{unimodular} if all of its cones are unimodular.

\begin{proposition}
When $F$ is unimodular, $F$ is balanced if and only if $G(F)$ is balanced.
\end{proposition}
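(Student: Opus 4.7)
The plan is to unpack both balancing conditions into explicit linear equations in $\mathbb{R}^n$ and show that, under unimodularity, they coincide ray-by-ray.

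First I would recall that the codimension one cells of a $2$-dimensional fan $F$ are precisely the rays $\tau_i=\text{cone}(u_i)$ generated by the primitive vectors $u_i$ labelling the vertices of $G(F)$. For such a $\tau_i$, the $2$-dimensional cones of $F$ containing $\tau_i$ with nonzero weight are exactly the cones $\sigma_{ij}=\text{cone}(u_i,u_j)$ indexed by the neighbours $u_j$ of $u_i$ in $G(F)$, and $\text{w}_F(\sigma_{ij})=w_{ij}$ by construction. The balancing condition of $F$ at $\tau_i$, as in Definition \ref{BalancingCondition}, reads
\[
\sum_{u_j\sim u_i} w_{ij}\,u_{\sigma_{ij}/\tau_i}=0 \quad \text{in } N(\tau_i)_{\mathbb{C}}=\mathbb{C}^n/\mathbb{C} u_i.
\]

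The key step is to identify the primitive normal vector $u_{\sigma_{ij}/\tau_i}\in N(\tau_i)=\mathbb{Z}^n/\mathbb{Z}u_i$ with the class $\bar u_j$ of $u_j$. Because $F$ is unimodular, the pair $\{u_i,u_j\}$ can be extended to a $\mathbb{Z}$-basis of $\mathbb{Z}^n$; consequently $\bar u_j$ is a primitive element of the quotient lattice $N(\tau_i)$, and it clearly generates the ray $\text{cone}(\sigma_{ij}-\tau_i)/H_{\tau_i}$. Hence $u_{\sigma_{ij}/\tau_i}=\bar u_j$, and the balancing condition at $\tau_i$ becomes
\[
\sum_{u_j\sim u_i} w_{ij}\,\bar u_j=0 \quad \text{in } \mathbb{R}^n/\mathbb{R} u_i,
\]
where I have used that the $w_{ij}$ are real so the complex equation reduces to a real one.

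Finally, this last equation is equivalent to saying that $\sum_{u_j\sim u_i} w_{ij}u_j$ lies in the line $\mathbb{R} u_i$, i.e.\ that there exists a (unique, since $u_i\neq 0$) real scalar $d_i$ with $\sum_{u_j\sim u_i} w_{ij}u_j=d_i u_i$. This is precisely the balancing condition for $G(F)$ at $u_i$. Since $i$ was arbitrary and the rays of $F$ are in bijection with the vertices of $G(F)$, $F$ is balanced if and only if $G(F)$ is balanced.

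The only real content is the identification $u_{\sigma_{ij}/\tau_i}=\bar u_j$, which is exactly where unimodularity is needed; without it one would instead get $u_{\sigma_{ij}/\tau_i}=\bar u_j/k_{ij}$ for some positive integer $k_{ij}$ and the two conditions would differ by a rescaling of edge weights.
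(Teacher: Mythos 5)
Your proof is correct and follows essentially the same route as the paper's: both hinge on the observation that unimodularity of $\sigma=\text{cone}(u_i,u_j)$ forces the image $\bar u_j$ of $u_j$ in $N(\tau_i)=\mathbb{Z}^n/\mathbb{Z}u_i$ to be primitive, hence equal to $u_{\sigma/\tau_i}$, which turns the balancing condition for $F$ at $\tau_i$ into the condition $\sum_{u_j\sim u_i} w_{ij}u_j\in\mathbb{R}\cdot u_i$. Your remark at the end about the integer rescaling $k_{ij}$ in the non-unimodular case is a nice touch but not present in (nor needed for) the paper's argument.
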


\begin{proof}
Let $u_iu_j$ be an edge of $G(F)$, let $\sigma$ be the cone over $u_iu_j$, and let $\tau$ be the cone over $u_i$.
Since $\sigma$ is unimodular, the image of $u_j$
in the normal lattice of $\tau$ is $u_{\sigma/\tau}$,
the primitive generator of the ray
\[
\text{cone}(\sigma-\tau)/H_\tau \subseteq H_\sigma/H_\tau.
\]
Therefore, the balancing condition for $F$ at $\tau$ is equivalent to the condition
\[
\sum_{u_i \sim u_j} w_{ij} u_j  \in \mathbb{R} \cdot u_i.
\]
\end{proof}

A geometric graph $G \subseteq \mathbb{R}^n$ is said to be \emph{locally extremal} if the set of neighbors of $u_i$ is linearly independent for every vertex $u_i \in G$.

\begin{proposition}\label{GraphExtremality}
Let $F$ be a $2$-dimensional real balanced weighted fan in $\mathbb{R}^n$. If  $G(F)$ is connected and locally extremal, then $F$ is strongly extremal.
\end{proposition}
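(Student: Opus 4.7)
The plan is to reduce the statement to Proposition \ref{LocalGlobalExtremality} by translating the graph-theoretic hypotheses on $G(F)$ into the complex-theoretic hypotheses on $F$, namely local extremality and connectedness in codimension $1$. The relevant correspondence is that $2$-dimensional cones of $F$ with nonzero weight are in bijection with the edges of $G(F)$, $1$-dimensional cones correspond to vertices, and two $2$-dimensional cones share a $1$-dimensional face if and only if the corresponding edges share an endpoint.

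Connectedness in codimension $1$ for $F$ is then immediate from the assumption that $G(F)$ is connected: a walk between two edges passing through shared vertices in $G(F)$ translates directly into the required chain $\sigma_0 \supset \tau_1 \subset \sigma_1 \supset \cdots \subset \sigma_l$ of $2$-dimensional cones with nonzero weight meeting along $1$-dimensional cones.

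The substantive step is local extremality of $F$. Fix a $1$-dimensional cone $\tau = \text{cone}(u_i)$, and let $u_{j_1},\ldots,u_{j_r}$ be the neighbors of $u_i$ in $G(F)$, corresponding to $2$-dimensional cones $\sigma_k = \text{cone}(u_i,u_{j_k})$ of nonzero weight. Each $u_{\sigma_k/\tau}$ is a positive rational multiple of the image $\overline{u}_{j_k} \in N(\tau)_\mathbb{R} = \mathbb{R}^n/\mathbb{R}u_i$, so linear (in)dependence may be tested on the $\overline{u}_{j_k}$. A relation $\sum_{k\in S} c_k \overline{u}_{j_k} = 0$ over a proper subset $S \subsetneq \{1,\ldots,r\}$ lifts to $\sum_{k\in S} c_k u_{j_k} = c u_i$ in $\mathbb{R}^n$. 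If $c=0$, linear independence of all neighbors (graph local extremality) forces every $c_k=0$. The case $c\neq 0$, where $u_i$ lies in the span of a proper subset of its neighbors, is excluded using the balancing condition for $F$ at $\tau$: it produces a relation $\sum_k \alpha_k \overline{u}_{j_k}=0$ with $\alpha_k \neq 0$ for all $k$, since $w_F(\sigma_k) \neq 0$ and each $\alpha_k$ is a positive rational multiple of $w_F(\sigma_k)$. Lifting and substituting the supposed expression for $u_i$ gives a linear relation among the $u_{j_k}$ with nonzero coefficient at any index $k_0 \notin S$, contradicting linear independence of the full neighbor set.

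With $F$ locally extremal and connected in codimension $1$, Proposition \ref{LocalGlobalExtremality} yields the strong extremality of $F$. The only delicate point is the interplay between the graph-theoretic hypothesis (full linear independence of all neighbors of each vertex) and the complex-theoretic conclusion (linear independence of every \emph{proper} subset of images $u_{\sigma/\tau}$ in the quotient $N(\tau)_\mathbb{R}$); the balancing assumption on $F$ is exactly what bridges the gap by supplying a global relation whose coefficients do not vanish.
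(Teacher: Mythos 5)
Your proof is correct and follows the same overall route as the paper: translate the hypotheses on $G(F)$ into local extremality and connectedness in codimension $1$ for the fan $F$, then invoke Proposition \ref{LocalGlobalExtremality}. The connectedness step matches the paper's.

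Where you genuinely add value is the local extremality step. The paper's proof simply says that since each image $\overline{u}_j$ in $N(\tau)_\mathbb{R}$ is a nonzero multiple of $u_{\sigma/\tau}$, the fan is locally extremal if and only if the graph is. This is stated as though it were a purely formal equivalence, but as you correctly observe, the two notions do not match on the nose: graph local extremality requires linear independence of the \emph{full} neighbor set in $\mathbb{R}^n$, while fan local extremality requires independence of every \emph{proper} subset of the images in the quotient $\mathbb{R}^n/\mathbb{R}u_i$. These are different conditions a priori (e.g., two independent neighbors $u_{j_1}, u_{j_2}$ could have $u_{j_1}-u_{j_2}\in\mathbb{R}u_i$, collapsing their images), and bridging them requires the balancing hypothesis together with the convention that edges of $G(F)$ carry nonzero weight. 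Your argument --- lifting a relation $\sum_{k\in S}c_k\overline{u}_{j_k}=0$ to $\sum_{k\in S}c_ku_{j_k}=cu_i$, dispatching $c=0$ by independence of the full neighbor set, and excluding $c\neq 0$ by substituting into the balancing relation $d_iu_i=\sum_k w_{ij_k}u_{j_k}$ to produce a forbidden nonzero coefficient outside $S$ --- is exactly the right way to close this gap. In short, your proof is not a different approach but a more complete one: it makes explicit the role of the balancing condition that the paper's ``hence'' silently assumes.
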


\begin{proof}
Let $u_iu_j$ be an edge of $G(F)$, let $\sigma$ be the cone over $u_iu_j$, and let $\tau$ be the cone over $u_i$.
The image of $u_j$ in the normal lattice of $\tau$ is a nonzero multiple of $u_{\sigma/\tau}$, and hence the weighted fan $F$ is locally extremal if and only if $G(F)$ is locally extremal.
Since $F$ is pure dimensional, $F$ is connected in codimension $1$ if and only if $G(F)$ is connected, and therefore the assertion follows from Proposition \ref{LocalGlobalExtremality}.
\end{proof}

When $F$ is balanced and unimodular, we define the \emph{tropical Laplacian} of $F$ to be the tropical Laplacian of $G(F)$, and the \emph{signature} of $F$ to be the signature of $G(F)$:
\[
\Big(n_+(F),n_-(F),n_0(F)\Big) := \Big(n_+\big(G(F)\big),n_-\big(G(F)\big),n_0\big(G(F)\big)\Big).
\]
In Sections \ref{PlusConstruction} and \ref{MinusConstruction}, we  introduce two basic operations on $F$ and analyze the change of the signature of $F$ under each operation
(see Figure \ref{fig:basic-op}).

\begin{figure}[bt]
\begin{center}
 \includegraphics[width=8cm]{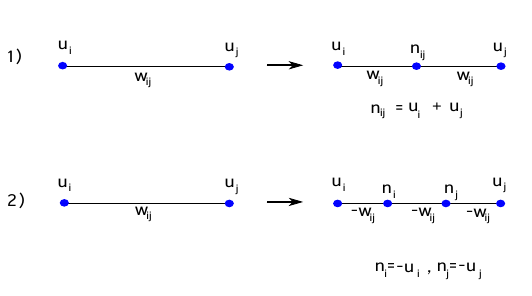}
\end{center}
\caption{The operation $F \longmapsto F_{ij}^+$ produces one new eigenvalue whose sign coincides with the sign of $w_{ij}$, and the operation $F \longmapsto F_{ij}^-$ produces one new positive and one new negative eigenvalue.}
\label{fig:basic-op}
\end{figure} 

\subsection{}\label{PlusConstruction}

Let $F$ be a $2$-dimensional real weighted fan in $\mathbb{R}^n$, and suppose that $u_1u_2$ is an edge of $G(F)$. We set
\[
n_{12}:=u_1+u_2,
\]
and define a $2$-dimensional real weighted fan $F^+_{12}$ as follows:
\begin{enumerate}[(1)]
\item The set of $1$-dimensional cones in $F_{12}^+$  is
\[
\Big\{\text{cone}(n_{12})\Big\} \cup \Big\{\text{$1$-dimensional cones in $F$}\Big\}.
\]
\item The set of $2$-dimensional cones in $F^+_{12}$ is
\[
\Big\{\text{cone}(u_1n_{12}), \text{cone}(u_2n_{12})\Big\} \cup \Big\{ \text{$2$-dimensional cones in $F$ other than $\text{cone}(u_1u_2)$}\Big\}.
\]
\item The weights on the $2$-dimensional cones in $F^+_{12}$ are
\begin{eqnarray*}
&&\text{w}_{F^+_{12}}\big(\text{cone}(u_1n_{12})\big):=w_{12},\\
&&\text{w}_{F^+_{12}}\big(\text{cone}(u_2n_{12})\big):=w_{12},\\
&&\text{w}_{F^+_{12}}\big(\text{cone}(u_iu_j)\big):=w_{ij}.
\end{eqnarray*}
\end{enumerate}
The abstract graph of $G(F^+_{12})$ is a subdivision of the abstract graph of $G(F)$, with one new vertex $n_{12}$ subdividing the edge connecting $u_1$ and $u_2$.
It is easy to see that
\begin{enumerate}[--]
\item $F_{12}^+$ is balanced if and only if $F$ is balanced, 
\vspace{1mm}
\item $F^+_{12}$ is unimodular if and only if $F$ is unimodular, 
\vspace{1mm}
\item $F_{12}^+$ is non-degenerate if and only if $F$ is non-degenerate, 
\vspace{1mm}
\item $F^+_{12}$ is strongly extremal if and only if $F$ is strongly extremal.
\end{enumerate}

Suppose $F$ is balanced and unimodular, so that the tropical Laplacians $L_{G(F)}$ and $L_{G(F_{12}^+)}$ are defined. The balancing condition for $G(F)$ translates to the balancing condition for $G(F^+_{12})$, and we can compute the diagonal entries of  $L_{G(F_{12}^+)}$ from the diagonal entries of $L_{G(F)}$. 
The balancing condition for $G(F)$ at $u_i$ is
\[
d_iu_i=\sum_{u_i \sim u_j} w_{ij}u_j,
\]
where the sum is over all neighbors of $u_i$ in the graph $G(F)$.
\begin{enumerate}[(1)]
\item The balancing condition for $G(F_{12}^+)$ at $u_1$ is
\[
(d_1+w_{12})u_1=w_{12}n_{12}+\sum_{u_j} w_{1j}u_j,
\]
where the sum is over all neighbors of $u_1$  other than $u_2$ in  the graph $G(F)$.
\item The balancing condition for $G(F_{12}^+)$ at $u_2$ is
\[
(d_2+w_{12})u_2=w_{12}n_{12}+\sum_{u_j} w_{2j}u_j,
\]
where the sum is over all neighbors of $u_2$  other than $u_1$ in the graph $G(F)$.
\item The balancing condition for $G(F_{12}^+)$ at $n_{12}$ is
\[
w_{12}n_{12}=w_{12}u_1+w_{12}u_2.
\]
\end{enumerate}
Around any other vertex,  the geometric graphs $G(F)$ and $G(F^+_{12})$ are identical, and hence the diagonal entries of the two tropical Laplacians agree.

\begin{proposition}\label{PositiveSignature}
We have
\[
\Big(n_+(F^+_{12}), \ n_-(F^+_{12}), \ n_0(F^+_{12})\Big)=\left\{\begin{array}{cc}\Big(n_+(F)+1,\ n_-(F),\ n_0(F)\Big) &\text{if $w_{12}$ is positive,} \vspace{2mm}\\ \Big(n_+(F),\ n_-(F)+1,\ n_0(F)\Big) &\text{if $w_{12}$ is negative.}  \end{array}\right.
\]
\end{proposition}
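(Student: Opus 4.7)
The plan is to reduce the signature computation to Sylvester's law of inertia by exhibiting an explicit congruence that block-diagonalizes $L_{G(F^+_{12})}$ into $L_{G(F)}$ and a $1 \times 1$ block $(w_{12})$.

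First I would read off the entries of $L_{G(F^+_{12})}$ from the three balancing identities already displayed in the text. Ordering the rows and columns as $u_1, u_2, n_{12}$ followed by the remaining vertices of $G(F)$, the new diagonal entries at $u_1, u_2, n_{12}$ equal $d_1 + w_{12}$, $d_2 + w_{12}$, and $w_{12}$; the off-diagonal entries at $(u_i, n_{12})$ for $i = 1, 2$ equal $-w_{12}$; the $(u_1, u_2)$ entry vanishes because $u_1$ and $u_2$ are no longer adjacent; the $n_{12}$-row and $n_{12}$-column vanish elsewhere, because $n_{12}$ has only the two neighbors $u_1$ and $u_2$; and every remaining entry coincides with the corresponding entry of $L_{G(F)}$.

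Next I would apply the congruence $P^T L_{G(F^+_{12})} P$ where $P$ is the invertible matrix obtained from the identity by inserting $1$'s at positions $(n_{12}, u_1)$ and $(n_{12}, u_2)$; equivalently, $P$ implements the column operations that add the $n_{12}$-column to each of the $u_1$- and $u_2$-columns, while $P^T$ performs the transpose row operations. A direct entry-by-entry check, using only the explicit form recorded above, produces
\[
P^T L_{G(F^+_{12})} P = \begin{pmatrix} L_{G(F)} & 0 \\ 0 & w_{12} \end{pmatrix}.
\]
The essential cancellations are that the column operation turns the $(u_i, u_i)$ diagonal from $d_i + w_{12}$ back to $d_i$ and restores the $(u_1, u_2)$ off-diagonal to $-w_{12}$, while the two operations together kill the $(u_i, n_{12})$ entries; the new vertex $n_{12}$ thereby decouples from the rest of the graph.

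Because $P$ is invertible, Sylvester's law of inertia then identifies the signature of $L_{G(F^+_{12})}$ with the signature of $L_{G(F)}$ plus the signature of the $1 \times 1$ block $(w_{12})$, which contributes $(1,0,0)$ when $w_{12} > 0$ and $(0,1,0)$ when $w_{12} < 0$. This yields precisely the two cases in the statement. There is no genuine conceptual obstacle beyond verifying the block-diagonalization, which is a calculation on a handful of scalar entries; the underlying conceptual content is that the subdivision creates a single new mode whose quadratic form is pure $w_{12}$ once the natural congruence has been performed.
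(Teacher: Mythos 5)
Your proof is correct and takes essentially the same route as the paper: the paper computes $Q_{G(F^+_{12})} - Q_{G(F)} = w_{12}(y_{12}-x_1-x_2)^2$ and concludes by the implicit change of variables $y_{12} \mapsto y_{12}+x_1+x_2$, which is exactly the congruence by your matrix $P$; you have merely made that step explicit in block-matrix form and invoked Sylvester's law of inertia by name.
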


\begin{proof}
Let $Q_{G(F)}$ and $Q_{G(F^+_{12})}$ be the quadratic forms associated to  $L_{G(F)}$ and $L_{G(F^+_{12})}$ respectively.
The above analysis of the balancing condition for $G(F^+_{12})$ shows that
\begin{multline*}
Q_{G(F^+_{12})}(y_{12},x_1,x_2,x_3,\ldots)-Q_{G(F)}(x_1,x_2,x_3,\ldots) =\\
w_{12} x_1^2 +w_{12}  x_2^2+2w_{12} x_1x_2+w_{12}  y_{12}^2-2w_{12}  x_1y_{12}-2y_{12} x_{2}y_{12},
\end{multline*}
where $y_{12}$ is the variable corresponding to the new vertex $n_{12}$ and $x_i$ is the variable corresponding to $u_i$. The above equation simplifies to
\[
Q_{G(F^+_{12})}(y_{12},x_1,x_2,x_3,\ldots)-Q_{G(F)}(x_1,x_2,x_3,\ldots)=w_{12} \big(y_{12}-x_1-x_2\big)^2,
\]
and the conclusion follows.
\end{proof}

\subsection{}\label{MinusConstruction}

An edge $u_1u_2$ of a geometric graph in $\mathbb{R}^n \setminus \{0\}$ is said to be in \emph{general position} if for every other edge $u_iu_j$ of the graph
\[
\text{span}(u_1,u_2) \cap \text{span}(u_i,u_j)=\left\{\begin{array}{cl} 0 &\text{if $\{u_1,u_2\} \cap \{u_i,u_j\}=\emptyset$,} \\ \text{span}(u_1)& \text{if $\{u_1,u_2\} \cap \{u_i,u_j\}=\{u_1\}$,} \\ \text{span}(u_2) &\text{if $\{u_1,u_2\} \cap \{u_i,u_j\}=\{u_2\}$.}\end{array}\right.
\]
Let $F$ be a $2$-dimensional real weighted fan in $\mathbb{R}^n$, and suppose that $u_1u_2$ is an edge of $G(F)$. If $u_1u_2$ is in general position, we set
\[
n_1:=-u_1, \quad n_2:=-u_2,
\] 
and  define a $2$-dimensional real weighted fan $F^-_{12}$ as follows:
\begin{enumerate}[(1)]
\item The set of $1$-dimensional cones in $F^-_{12}$ is
\[
\Big\{\text{cone}(n_1), \text{cone}(n_2)\Big\} \cup \Big\{\text{$1$-dimensional cones in $F$}\Big\}.
\]
\item The set of $2$-dimensional cones in $F^-_{12}$ is
\[
\Big\{\text{cone}(n_1n_2),\text{cone}(n_1u_2),\text{cone}(u_1n_2)\Big\} \cup \Big\{\text{$2$-dimensional cones in $F$ other than $\text{cone}(u_1u_2)$}\Big\}.
\]
\item The weights on the $2$-dimensional cones in $F^-_{12}$ are
\begin{eqnarray*}
&&\text{w}_{F^-_{12}}\big(\text{cone}(n_1n_2)\big):=-w_{12},\\
&&\text{w}_{F^-_{12}}\big(\text{cone}(n_1u_2)\big):=-w_{12},\\
&&\text{w}_{F^-_{12}}\big(\text{cone}(u_1n_2)\big):=-w_{12},\\
&&\text{w}_{F^-_{12}}\big(\text{cone}(u_iu_j)\big):=w_{ij}.
\end{eqnarray*}
\end{enumerate}
The cones in $F_{12}^-$ form a fan because $u_1u_2$ is in general position. The abstract graph of $G(F^-_{12})$ is a subdivision of the abstract graph of $G(F)$, with two new vertices $n_1$ and $n_2$ subdividing the edge connecting $u_1$ and $u_2$.
It is easy to see that
\begin{enumerate}[--]
\item $F^-_{12}$ is balanced if and only if $F$ is balanced,
\vspace{1mm}
\item $F^-_{12}$ is unimodular if and only if $F$ is unimodular, 
\vspace{1mm}
\item $F_{12}^-$ is non-degenerate if and only if $F$ is non-degenerate, 
\vspace{1mm}
\item $F^-_{12}$ is strongly extremal if and only if $F$ is strongly extremal.
\end{enumerate}

Suppose $F$ is balanced and unimodular, so that the tropical Laplacians $L_{G(F)}$ and $L_{G(F_{12}^-)}$ are defined. The balancing condition for $G(F)$ translates to the balancing condition for $G(F^-_{12})$, and we can compute the diagonal entries of $L_{G(F^-_{12})}$ from the diagonal entries of $L_{G(F)}$. 
The balancing condition for $G(F)$ at $u_i$ is
\[
d_iu_i=\sum_{u_i \sim u_j} w_{ij}u_j,
\]
where the sum is over all neighbors of $u_i$ in the graph $G(F)$.

\begin{enumerate}[(1)]
\item The balancing condition for $G(F_{12}^-)$ at $u_1$ is
\[
d_1u_1=(-w_{12})n_2 + \sum_j w_{1j}u_j,
\]
where the sum is over all neighbors of $u_1$ other than $u_2$ in the graph $G(F)$.
\item The balancing condition for $G(F_{12}^-)$ at $u_2$ is
\[
d_2u_2=(-w_{12})n_1 + \sum_j w_{2j}u_j,
\]
where the sum is over all neighbors of $u_2$ other than $u_1$ in the graph $G(F)$.
\item The balancing condition for $G(F_{12}^-)$ at $n_1$ is
\[
0 \cdot n_{1}=(-w_{12})u_2+(-w_{12})n_2.
\]
\item The balancing condition for $G(F_{12}^-)$ at $n_2$ is
\[
0 \cdot n_{2}=(-w_{12})u_1+(-w_{12})n_1.
\]
\end{enumerate}
Around any other vertex, the geometric graphs $G(F)$ and $G(F^-_{12})$ are identical, and hence the diagonal entries of the two tropical Laplacians agree.

\begin{proposition}\label{MinusSignature}
We have
\[
\Big(n_+(F^-_{12}),\ n_-(F^-_{12}),\ n_0(F^-_{12})\Big)=\Big(n_+(F)+1,\ n_-(F)+1,\ n_0(F) \Big).
\]
\end{proposition}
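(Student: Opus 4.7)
The plan is to mimic the strategy of Proposition \ref{PositiveSignature}: track how the Laplacian matrix changes, write the difference $Q_{G(F_{12}^-)} - Q_{G(F)}$ of the two associated quadratic forms in closed form, and recognize that after an invertible linear change of variables it decomposes as $Q_{G(F)}$ plus a hyperbolic form in two new variables.

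The first step is to record exactly which entries of the Laplacian change. The key observation, already built into the balancing calculations displayed before the proposition, is that $n_i = -u_i$ forces a cancellation: the new edges $u_1 n_2$ and $u_2 n_1$ both of weight $-w_{12}$ contribute $(-w_{12})(-u_2) = w_{12} u_2$ and $(-w_{12})(-u_1) = w_{12} u_1$ at the vertices $u_1$ and $u_2$, precisely replacing the contribution of the removed edge $u_1 u_2$. Hence the diagonal entries $d_1, d_2$ are preserved, and the new diagonal entries at $n_1, n_2$ are both zero. The only further modifications to the matrix are that the $(u_1,u_2)$ off-diagonal entry drops from $-w_{12}$ to $0$ and that three new off-diagonal entries of value $-(-w_{12}) = w_{12}$ appear at the positions $(n_1, n_2)$, $(u_1, n_2)$, and $(u_2, n_1)$.

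The second step is a direct computation. Writing $x_i$ for the variable attached to $u_i$ and $y_1, y_2$ for those attached to $n_1, n_2$, summing the six new off-diagonal contributions (each entering twice in the symmetric form) gives
\[
Q_{G(F_{12}^-)}(y_1, y_2, x_1, x_2, \ldots) - Q_{G(F)}(x_1, x_2, \ldots) = 2w_{12}\bigl(x_1 x_2 + y_1 y_2 + x_1 y_2 + x_2 y_1\bigr) = 2w_{12}(x_1 + y_1)(x_2 + y_2).
\]
Now perform the invertible linear change of variables that keeps the $x_i$ and replaces $y_1, y_2$ by $u := x_1 + y_1$ and $v := x_2 + y_2$. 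In these new coordinates $Q_{G(F_{12}^-)}$ splits as the orthogonal direct sum $Q_{G(F)}(x_1,\ldots) \oplus 2w_{12}\, uv$, and the identity $2uv = \tfrac{1}{2}\bigl((u+v)^2 - (u-v)^2\bigr)$ shows that the two-variable summand has signature $(1,1)$ for every nonzero $w_{12}$. Adding this to the signature of $Q_{G(F)}$ yields the stated formula.

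The only potentially delicate part is the bookkeeping in the first step: one must confirm the cancellation keeping $d_1, d_2$ unchanged and see that no other diagonal entry of $L_{G(F_{12}^-)}$ is affected. Once that is checked, the recognition of the factorization $(x_1+y_1)(x_2+y_2)$ is immediate, and the sign-independent $(1,1)$ contribution is what makes the minus construction qualitatively different from the plus construction analyzed in Proposition \ref{PositiveSignature}.
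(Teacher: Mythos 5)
Your proof is correct and follows essentially the same route as the paper: write the difference of quadratic forms, recognize the hyperbolic factorization $(x_1+y_1)(x_2+y_2)$, and conclude that the signature changes by $(1,1,0)$ regardless of the sign of $w_{12}$. One small remark: your coefficient $2w_{12}$ is in fact the correct one (since the cross term $L_{ij}v_iv_j$ appears twice in $v^T L v$), whereas the paper's displayed formula writes $w_{12}(y_1+x_1)(y_2+x_2)$ — a harmless factor-of-two slip that is inconsistent with the analogous computation in Proposition~\ref{PositiveSignature} and has no bearing on the signature conclusion; your bookkeeping is the consistent one.
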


\begin{proof}
Let $Q_{G(F)}$ and $Q_{G(F^-_{12})}$ be the quadratic forms associated to $L_{G(F)}$ and $L_{G(F^-_{12})}$ respectively.
The above analysis of the balancing condition for $G(F^-_{12})$ shows that
\begin{multline*}
Q_{G(F^-_{12})}(y_{1},y_2,x_1,x_2,x_3,\ldots)-Q_{G(F)}(x_1,x_2,x_3,\ldots)=\\
w_{12} x_1x_2+w_{12} x_1y_2+w_{12}  x_2y_1+w_{12}  y_1y_2,
\end{multline*}
where $y_1,y_2$ are variables corresponding to $n_1,n_2$ respectively and $x_i$ is the variable corresponding to $u_i$. The above equation simplifies to
\[
Q_{G(F^-_{12})}(y_{1},y_2,x_1,x_2,x_3,\ldots)-Q_{G(F)}(x_1,x_2,x_3,\ldots)=w_{12}(y_1+x_1)(y_2+x_2),
\]
and the conclusion follows.

\end{proof}

\subsection{}

Let $X$ be an $n$-dimensional smooth projective toric variety, $\Sigma$ be the fan of $X$, and let $p$ be an integer $\ge 2$. 

\begin{proposition}\label{OneNegativeEigenvalue}
Let $\{\mathscr{T}\}$ be a $(p,p)$-dimensional cohomology class in $X$.
If there are nonnegative real numbers $\lambda_i$ and $p$-dimensional irreducible subvarieties $Z_i \subseteq X$ such that
\[
\{\mathscr{T}\}=\lim_{i \to \infty} \big\{\lambda_i [Z_i] \big\},
\]
then, for any nef divisors $H_1,H_2,\ldots$ on $X$, the tropical Laplacian of the $2$-dimensional balanced weighted fan
\[
\{H_1\} \cup \ldots \cup \{H_{p-2}\} \cup \{\mathscr{T}\}  
\]
has at most one negative eigenvalue.
\end{proposition}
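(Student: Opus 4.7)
The plan is to identify the tropical Laplacian of $F := \{H_1\}\cup\cdots\cup\{H_{p-2}\}\cup\{\mathscr{T}\}$, up to a sign, with the matrix of an intersection-theoretic bilinear form on $X$, and then read off the signature bound from the mixed Hodge--Riemann relations via a restriction argument. Write $\alpha \in H^{n-2,n-2}(X,\mathbb{R})$ for the class of $F$, let $u_1,u_2,\ldots$ be its primitive ray generators, $D_1,D_2,\ldots$ the corresponding torus-invariant prime divisors, and $w_{ij}$ the weights of $F$.

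The first step is to verify the identity
\[
L_G \;=\; -M, \qquad M_{ij} \;:=\; D_i\cdot D_j\cdot\alpha.
\]
For $i\neq j$ such that $u_iu_j$ spans a $2$-cone $\sigma$ in $F$, $D_i\cdot D_j=[V(\sigma)]$ and Fulton--Sturmfels duality gives $\langle[V(\sigma)],\alpha\rangle=w_{ij}$; if $u_i,u_j$ do not span such a cone both sides vanish. For $i=j$, I would pick a character $m$ of the torus with $\langle u_i,m\rangle=1$, apply the cohomological relation $\sum_\rho\langle u_\rho,m\rangle D_\rho=0$ in $H^{1,1}(X)$ to express $D_i^2\cdot\alpha$ as a combination of the already-computed off-diagonal terms, and then invoke the balancing condition at $u_i$, paired with $m$, to obtain $d_i=\sum_{u_i\sim u_j}w_{ij}\langle u_j,m\rangle$. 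Substituting gives $D_i^2\cdot\alpha=-d_i$.

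Once this is in hand, I would reduce by continuity to the case of a single irreducible subvariety. After enlarging $L_G$ and $M$ to matrices indexed by all rays of $\Sigma$ (padding by zeros), both depend linearly on $\{\mathscr{T}\}$, and ``at most one positive eigenvalue'' is a closed condition on symmetric matrices, so it suffices to treat each approximant $\{\mathscr{T}_i\}=\lambda_i\{[Z_i]\}$. Fixing $\lambda\ge 0$ and an irreducible $p$-dimensional $Z\subseteq X$, let $\pi:\tilde Z\to Z$ be a resolution of singularities, with $\tilde Z$ smooth projective of dimension $p$. For any $(1,1)$-classes $\beta_1,\beta_2$ on $X$,
\[
\beta_1\cdot\beta_2\cdot H_1\cdots H_{p-2}\cdot\lambda[Z]
\;=\;\lambda\int_{\tilde Z}\pi^*\beta_1\wedge\pi^*\beta_2\wedge\pi^*H_1\wedge\cdots\wedge\pi^*H_{p-2},
\]
so the bilinear form $(\beta_1,\beta_2)\mapsto \beta_1\cdot\beta_2\cdot\alpha$ on $H^{1,1}(X,\mathbb{R})$ is the pullback along the restriction map $H^{1,1}(X,\mathbb{R})\to H^{1,1}(\tilde Z,\mathbb{R})$ of the form $(\gamma_1,\gamma_2)\mapsto \gamma_1\cdot\gamma_2\cdot\pi^*H_1\cdots\pi^*H_{p-2}$ on $H^{1,1}(\tilde Z,\mathbb{R})$. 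Each $\pi^*H_i$ is nef on $\tilde Z$, and the mixed Hodge--Riemann bilinear relations, applied to Kähler perturbations $\pi^*H_i+\varepsilon\omega$ and passed to the limit $\varepsilon\downarrow 0$, show that the latter pairing has at most one positive eigenvalue. Pullback of a bilinear form under a linear map does not increase the number of positive eigenvalues, so the form on $H^{1,1}(X,\mathbb{R})$ inherits the bound; restricting further to the subspace spanned by the torus-invariant divisors appearing in $F$ preserves it, and the identification $L_G=-M$ then converts ``at most one positive eigenvalue of $M$'' into ``at most one negative eigenvalue of $L_G$'', as desired.

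The main obstacle is the first step: matching the tropical Laplacian's diagonal entries $d_i$, defined purely combinatorially from the balancing condition and the lattice positions of the $u_i$, with the self-intersection numbers $-D_i^2\cdot\alpha$. This is the junction at which the combinatorics of balanced weighted fans meets the intersection theory on the toric variety, and it relies essentially on Fulton--Sturmfels duality together with the relations $\sum_\rho\langle u_\rho,m\rangle D_\rho=0$. The remaining passage from Kähler to nef in the Hodge--Riemann argument is the familiar approximation technicality.
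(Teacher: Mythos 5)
Your proposal is correct, and its first step---the identification $L_G=-M$ with $M_{ij}=D_i\cdot D_j\cdot\alpha$, carried out by pairing the balancing condition with a character $m$ satisfying $\langle u_i,m\rangle=1$ and comparing with the rational equivalence $\sum_\rho\langle u_\rho,m\rangle D_\rho\sim 0$---is exactly the computation in the paper's proof. Where you diverge is in the Hodge-theoretic input. The paper first reduces to $p=2$: it approximates the nef classes $H_j$ by ample ones, applies Bertini (Jouanolou) to cut $Z_i$ down to an irreducible \emph{surface}, and then invokes the classical Hodge index theorem on a resolution of that surface, with only Griffiths--Harris as the reference. You skip the Bertini cut entirely, resolve the $p$-dimensional $Z$ directly, and apply the mixed Hodge--Riemann bilinear relations (Cattani, Dinh--Nguyen, Timorin) to the form $\gamma_1\cdot\gamma_2\cdot\pi^*H_1\cdots\pi^*H_{p-2}$ on $H^{1,1}(\widetilde{Z},\mathbb{R})$, degenerating from K\"ahler classes $\pi^*H_j+\varepsilon\omega$ to the nef $\pi^*H_j$ by semicontinuity of the positive index. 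Both routes are sound and roughly the same length. The trade-off: your argument is more uniform and avoids the slightly delicate chain of approximations (nef $\to$ ample $\to$ very ample $\to$ general member $\to$ irreducible surface), at the cost of invoking the mixed Hodge--Riemann theorem, which is a genuine but less elementary input than the surface Hodge index theorem the paper is content to use; the paper's Bertini detour is precisely what lets it stay within classical references. One small point of care in your write-up: when you say pullback of a symmetric bilinear form under a linear map does not increase the number of positive eigenvalues, this is right, and it is exactly how the paper packages the final step too (the linear map $V\to \mathrm{NS}^1_{\mathbb{R}}(\widetilde{Z})$, $e_i\mapsto\pi^*c_1(L_i)$); but note that this map is typically far from injective, so the zero eigenvalues picked up on its kernel are the reason the conclusion is ``at most one,'' not ``exactly one,'' once you pass to the limit over $i$.
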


In particular,  by continuity of the cohomology class assignment, if a $(2,2)$-dimensional closed current $\mathscr{T}$ on $X$ is the weak limit of the form
\[
\mathscr{T}=\lim_{i \to \infty} \lambda_i [Z_i],
\]
where  $\lambda_i$ are nonnegative real numbers and $Z_i$ are irreducible surfaces in $X$, then the tropical Laplacian of  $\{\mathscr{T}\}$ has at most one negative eigenvalue.

\begin{proof}
Repeatedly applying the Bertini theorem \cite[Corollary 6.11]{Jouanolou} to a  general element of the linear system $|H_i|$, we are reduced to the case when $p=2$: 
If there are nonnegative real numbers $\lambda_i$ and irreducible surfaces $Z_i \subseteq X$ such that
\[
\{\mathscr{T}\}=\lim_{i \to \infty} \big\{\lambda_i [Z_i] \big\},
\]
then the tropical Laplacian of $\{\mathscr{T}\}$ has at most one negative eigenvalue.
By continuity,  it is enough to prove the following statement: If $Z \subseteq X$ is an irreducible surface, then the tropical Laplacian of $\big\{[Z]\big\}$ has exactly one negative eigenvalue.

 Let $F$ be the cohomology class $\big\{[Z]\big\}$, viewed as a $2$-dimensional weighted fan in $\mathbb{R}^n$, and let
 \begin{eqnarray*}
 u_1,u_2,\ldots &:=& \text{primitive generators of $1$-dimensional cones in $\Sigma$,}\\
 D_1,D_2,\ldots &:=& \text{torus-invariant prime divisors of $\text{cone}(u_1),\text{cone}(u_2),\ldots$,}\\
 L_1,L_2,\ldots &:=& \text{line bundles on $X$ corresponding to $D_1,D_2,\ldots$.}
 \end{eqnarray*}
The $2$-dimensional cones in $F$ are the $2$-dimensional cones in $\Sigma$, and the weights are given by
\[
w_{ij}:=\text{w}_F\big(\text{cone}(u_iu_j)\big)=c_1(L_i) \cup c_1(L_j) \cap \text{cl}(Z)=D_i \cdot D_j \cdot \text{cl}(Z).
\] 
Let $d_i$ be a diagonal entry of the tropical Laplacian of $F$, determined by the balancing condition
\[
d_iu_i=\sum_{u_i \sim u_j} w_{ij}u_j,
\]
where the sum is over all neighbors of $u_i$ in the graph of $F$. We claim that
\[
d_i=-D_i \cdot D_i \cdot \text{cl}(Z).
\]
To see this, choose any $m \in (\mathbb{Z}^n)^\vee$ satisfying $\langle u_i,m\rangle=1$. By the balancing condition above, we have
\[
d_i=\sum_{u_i \sim u_j} w_{ij}\langle u_j,m \rangle.
\]
The divisor of the character $\chi^m$ in $X$ is
\[
\text{div}(\chi^m)=\sum_{j} \langle u_j,m\rangle D_j,
\]
where the sum is over all torus-invariant prime divisors in $X$ \cite[Proposition 4.1.2]{Cox-Little-Schenck}. Therefore, we have the rational equivalence
\[
-D_i \sim \sum_{j\neq i} \langle u_j,m\rangle D_j,
\]
where the sum is over all torus-invariant prime divisors in $X$ not equal to $D_i$.
Since $D_i$ and $D_j$ are disjoint when $u_iu_j$ does not generate a cone in $F$, this implies
\[
-D_i \cdot D_i \cdot \text{cl}(Z) =  \sum_{u_i \sim u_j} \langle u_j,m\rangle \Big(D_i \cdot D_j \cdot \text{cl}(Z)\Big),
\]
where the sum is over all neighbors of $u_i$ in the graph of $F$.
It follows that
\[
d_i=\sum_{u_i \sim u_j} w_{ij}\langle u_j,m \rangle = \sum_{u_i \sim u_j}  \langle u_j,m \rangle \Big(D_i \cdot D_j \cdot \text{cl}(Z)\Big)=-D_i \cdot D_i \cdot \text{cl}(Z).
\]

We now show that the tropical Laplacian of $F$ has exactly one negative eigenvalue.
Choose any resolution of singularities $\pi: \widetilde{Z} \longrightarrow Z$. By the projection formula, for any $i$ and $j$,
\[
D_i \cdot D_j \cdot \text{cl}(Z)= \pi^*\big(c_1(L_i)\big) \cup \pi^*\big(c_1(L_j)\big) \cap \text{cl}(\widetilde{Z}).
\]
Let $V$ be the real vector space with basis $e_1,e_2,\ldots$, and consider the linear map to the N\'eron-Severi space
\[
V \longrightarrow \text{NS}^1_\mathbb{R}(\widetilde{Z}), \qquad e_i \longmapsto \pi^*\big(c_1(L_i)\big).
\]
By the computation made above, the quadratic form associated to the tropical Laplacian of $F$ is obtained as the composition
\[
\xymatrix{
V \times V \ar[r] & \text{NS}^1_\mathbb{R}(\widetilde{Z}) \times \text{NS}^1_\mathbb{R}(\widetilde{Z}) \ar[r]^{\hspace{12mm} -I} & \mathbb{R},
}
\]
where $I$ is the intersection form on $\widetilde{Z}$. By the Hodge index theorem \cite[Chapter 4]{Griffiths-Harris}, $-I$ has signature of the form $(\rho-1,1,0)$, and hence the tropical Laplacian of $F$ has at most one negative eigenvalue. Since $X$ is projective, the tropical Laplacian has, in fact, exactly one negative eigenvalue.
\end{proof}

The following application of Milman's converse to the Krein-Milman theorem relates extremality to the strongly positive Hodge conjecture, 
see \cite[Proof of Proposition 5.2]{DemaillyHodge}.
 
\begin{proposition}\label{Milman'sConverse}
Let $\mathscr{T}$ be a $(p,p)$-dimensional closed current on $X$ of the form
\[
\mathscr{T}=\lim_{i \to \infty} \mathscr{T}_i, \quad \mathscr{T}_i=\sum_j \lambda_{ij} [Z_{ij}],
\]
where $\lambda_{ij}$ are nonnegative real numbers and $Z_{ij}$ are $p$-dimensional irreducible subvarieties of $X$.
If $\mathscr{T}$ generates an extremal ray of the cone of strongly positive closed currents on $X$, then
 there are nonnegative real numbers $\lambda_i$ and $p$-dimensional irreducible subvarieties $Z_i \subseteq X$  such that
\[
\mathscr{T}=\lim_{i \to \infty} \lambda_i [Z_i].
\]
\end{proposition}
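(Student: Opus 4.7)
The plan is to normalize by mass and then invoke Milman's converse to the Krein--Milman theorem, along the lines of \cite[Proof of Proposition 5.2]{DemaillyHodge}. I would let $M := \frac{1}{p!}\langle\mathscr{T},w_0^p\rangle$ denote the total trace mass of $\mathscr{T}$ (which we may assume is positive), introduce the set $K_1$ of strongly positive closed $(p,p)$-currents on $X$ of trace mass one, and let $A \subseteq K_1$ be the subset consisting of the normalized integration currents $[Z]/m_Z$ (with $m_Z := \frac{1}{p!}\langle [Z], w_0^p\rangle$) as $Z$ ranges over irreducible $p$-dimensional subvarieties of $X$. First I would verify that $K_1$ is a compact convex subset of the space of currents in the weak-$*$ topology---this follows from weak-$*$ precompactness of bounded families of positive currents, the preservation of positivity and closedness under weak limits, and continuity of the trace mass functional $\mathscr{S}\longmapsto \frac{1}{p!}\langle\mathscr{S},w_0^p\rangle$---so that $K := \overline{\mathrm{conv}}(A) \subseteq K_1$ is also compact and convex.

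Next I would show that $\mathscr{T}/M$ lies in $K$ and is an extreme point of $K$. For each approximant $\mathscr{T}_i = \sum_j \lambda_{ij}[Z_{ij}]$ with trace mass $M_i$, the numbers $\mu_{ij} := \lambda_{ij}m_{Z_{ij}}/M_i$ sum to one and express $\mathscr{T}_i/M_i = \sum_j \mu_{ij}\cdot [Z_{ij}]/m_{Z_{ij}}$ as an element of $\mathrm{conv}(A)$; since $M_i \to M$ by continuity of trace mass on positive currents, this gives $\mathscr{T}_i/M_i \to \mathscr{T}/M$ weakly, placing $\mathscr{T}/M$ in $K$. For extremality, any decomposition $\mathscr{T}/M = (1-t)P + tQ$ with $P,Q \in K$ and $t \in (0,1)$ rescales to a decomposition of $\mathscr{T}$ into strongly positive closed currents; the hypothesis on $\mathscr{T}$ forces each piece to be a nonnegative multiple of $\mathscr{T}$, and comparing masses (each of $P$, $Q$, $\mathscr{T}/M$ has trace mass one) yields $P = Q = \mathscr{T}/M$.

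Finally, Milman's converse applied to the compact convex set $K$ with generating subset $A$ yields $\mathscr{T}/M \in \overline{A}$, producing irreducible $p$-dimensional subvarieties $Z_i \subseteq X$ with $[Z_i]/m_{Z_i} \to \mathscr{T}/M$ weakly; setting $\lambda_i := M/m_{Z_i}$ gives $\mathscr{T} = \lim_{i \to \infty} \lambda_i [Z_i]$. The main conceptual step is the extreme-point verification above; the main technical obstacle will be the functional-analytic setup---ensuring $K_1$ is genuinely compact in a locally convex Hausdorff TVS to which Milman's theorem applies---but this should be standard once the weak-$*$ topology on currents is in place.
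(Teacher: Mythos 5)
Your proposal is correct and follows essentially the same route as the paper: normalize by mass, show $\mathscr{T}/M$ lies in and is an extreme point of the closed convex hull of normalized integration currents (using compactness from Banach--Alaoglu together with weak-$*$ closedness of the constraints), and apply Milman's converse to the Krein--Milman theorem. The only differences are cosmetic (you normalize by trace mass $\frac{1}{p!}\langle\mathscr{T},w_0^p\rangle$ while the paper uses $\int_X \mathscr{T}\wedge w^p$, and you spell out the extreme-point check and the convergence $\mathscr{T}_i/M_i\to\mathscr{T}/M$ in slightly more detail than the paper does).
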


\begin{proof}
For a $(p,p)$-dimensional positive current $T$ on $X$, we set
\[
||T||:=\int_X T \wedge w^p,
\]
where $w$ is the fixed K\"ahler form on $X$.
Consider the sets of positive currents
\begin{eqnarray*}
\mathscr{S}&:=&\Bigg\{ T: \text{$T$ is a $(p,p)$-dimensional positive closed current with $||T||=1$}\Bigg\},\\
\mathscr{K}&:=&\Bigg\{\frac{[Z]}{||[Z]||} : \text{$Z$ is a $p$-dimensional irreducible subvariety of $X$}\Bigg\} \subseteq \mathscr{S}.
\end{eqnarray*}
Banach-Alaoglu theorem \cite[Theorem 3.15]{Rudin} shows that $\mathscr{S}$ is compact, and hence
the closure $\overline{\mathscr{K}} \subseteq \mathscr{S}$ and the closed convex hull $\overline{\text{co}}(\mathscr{K}) \subseteq \mathscr{S}$ are compact.
Since the space of $(p,p)$-dimensional currents on $X$ is locally convex, Milman's theorem \cite[Theorem 3.25]{Rudin} applies to these compact sets:
Every extremal element of $\overline{\text{co}}(\mathscr{K})$ is contained in $\overline{\mathscr{K}}$.
To conclude, we note that 
\[
\frac{\mathscr{T}}{||\mathscr{T}||} \in \overline{\text{co}}(\mathscr{K}).
\]
Indeed, the current $\mathscr{T}_i$ is nonzero for sufficiently large $i$, and 
\[
\frac{\mathscr{T}}{||\mathscr{T}||}=\lim_{i \to \infty} \frac{\mathscr{T}_i}{||\mathscr{T}_i||}, \qquad\frac{ \mathscr{T}_i}{|| \mathscr{T}_i||} \in \text{co}(\mathscr{K}).
\]
Furthermore, since the cone of strongly positive closed currents contains $\overline{\text{co}}(\mathscr{K})$, the current $\mathscr{T}/||\mathscr{T}||$ is an extremal element of $\overline{\text{co}}(\mathscr{K})$. It follows from Milman's theorem that 
\[
\frac{\mathscr{T}}{||\mathscr{T}||} \in \overline{\mathscr{K}}.
\]
In other words, there are $p$-dimensional irreducible subvarieties $Z_i \subseteq X$ such that
\[
\frac{\mathscr{T}}{||\mathscr{T}||}=\lim_{i \to \infty} \frac{[Z_i]}{||[Z_i]||}.
\]
\end{proof}

\subsection{}

Suppose $F$ is a $2$-dimensional real weighted fan in $\mathbb{R}^n$ with the following properties:
\begin{enumerate}[--]
\item $F$ is balanced, unimodular, and non-degenerate,
\vspace{1mm}
\item $G(F)$ is connected and locally extremal,
\vspace{1mm}
\item the negative edges of $G(F)$  are pairwise disjoint and in general position.
\end{enumerate}
Let $u_1u_2,u_3u_4,\ldots$ be the negative edges of $G(F)$, and let $m$ be the number of negative edges. Since the negative edges are pairwise disjoint and in general position, we may define
\[
\widetilde{F}:=(((F^-_{12})_{34}^-)^-_{56}\ldots)^-_{2m-12m}.
\]
The resulting weighted fan $\widetilde{F}$  has the following properties:
\begin{enumerate}[--]
\item $\widetilde{F}$ is positive,
\vspace{1mm}
\item $\widetilde{F}$ is balanced, unimodular, and non-degenerate,
\vspace{1mm}
\item $G(\widetilde{F})$ is connected and locally extremal.
\end{enumerate}
In addition,  by Proposition \ref{MinusSignature}, 
\[
n_-(\widetilde{F}) \ge (\text{the number of negative edges of $G(F)$}).
\]
We construct an example of $F$ in $\mathbb{R}^4$ with the stated properties.

We start from a geometric realization $G \subseteq \mathbb{R}^4 \setminus \{0\}$ of the complete bipartite graph
\[
{\xymatrixcolsep{2pc}\xymatrixrowsep{3pc}
\xymatrix{
e_1 \ar@{-}[d] \ar@{-}[dr] \ar@{-}[drr] \ar@{-}[drrr]& e_2  \ar@{-}[d]  \ar@{-}[dl] \ar@{-}[dr] \ar@{-}[drr]& e_3  \ar@{-}[d] \ar@{-}[dll] \ar@{-}[dl] \ar@{-}[dr]& e_4  \ar@{-}[d] \ar@{-}[dlll] \ar@{-}[dll] \ar@{-}[dl]\\
f_1&f_2&f_3&f_4,
}
}
\]
where $e_1,e_2,e_3,e_4$ are the standard basis vectors of $\mathbb{R}^4$ and $f_1,f_2,f_3,f_4$ are suitable primitive integral vectors to be determined. 

Let $M$ be the matrix with row vectors $f_1,f_2,f_3,f_4$, and let $\mathscr{P}$ be the collection of cones
\[
\Big\{0\Big\} \cup \Big\{\text{cone}(u_i) \mid \text{$u_i$ is a vertex of $G$}\Big\} \cup \Big\{\text{cone}(u_iu_j) \mid \text{$u_iu_j$ is an edge of $G$}\Big\}.
\]
If the determinant of $M$ is nonzero, then $\{f_1,f_2,f_3,f_4\}$ is linearly independent, and hence $G$ is locally extremal.  

\begin{lemma}\label{GenericMinors}
 If all $2 \times 2$ minors of $M$ are nonzero, then every edge of $G$ is in general position, and $\mathscr{P}$ is a fan.
\end{lemma}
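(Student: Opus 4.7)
Both claims reduce to linear-independence statements about the vertices of $G$, which in turn follow directly from nonvanishing of specific $2\times 2$ minors of $M$. Since every edge of $K_{4,4}$ has the form $e_if_j$, two edges $e_{i_1}f_{j_1}$ and $e_{i_2}f_{j_2}$ meet in one of three ways, and the definition of general position unfolds into:
(a) disjoint edges ($i_1\neq i_2$, $j_1\neq j_2$) require $e_{i_1}, e_{i_2}, f_{j_1}, f_{j_2}$ linearly independent;
(b) edges sharing $e_i$ require the two planes $\text{span}(e_i, f_{j_1})$, $\text{span}(e_i, f_{j_2})$ to be distinct, i.e., $e_i, f_{j_1}, f_{j_2}$ linearly independent;
(c) edges sharing $f_j$ dually require $e_{i_1}, e_{i_2}, f_j$ linearly independent.

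For (a), I will expand the $4\times 4$ determinant along the two standard-basis rows $e_{i_1}, e_{i_2}$; it equals up to sign the $2\times 2$ minor of $M$ on rows $\{j_1,j_2\}$ and columns $\{1,\dots,4\}\setminus\{i_1,i_2\}$, nonzero by hypothesis. For (b), the condition is that the $2\times 3$ submatrix of $M$ on rows $\{j_1,j_2\}$ and the three columns $\neq i$ has rank $2$; all three of its $2\times 2$ minors are nonzero, so this is immediate. For (c), I must show $f_j$ has a nonzero coordinate outside $\{i_1, i_2\}$: picking any other row index $k$, the nonvanishing of the $2\times 2$ minor of $M$ on rows $\{j,k\}$ and the two columns $\{1,\dots,4\}\setminus\{i_1,i_2\}$ forces at least one such entry of $f_j$ to be nonzero. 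As a free byproduct, every $f_j$ has at least two nonzero entries, so $f_j$ is never a scalar multiple of any $e_i$ and any two $f$'s are linearly independent.

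For the fan property, I will check that any two cones of $\mathscr P$ intersect in a common face. The nontrivial cases are two maximal cones meeting, and a ray meeting a maximal cone of which it is not a face. For two maximal cones $\text{cone}(e_{i_1}, f_{j_1})$ and $\text{cone}(e_{i_2}, f_{j_2})$, by the three independence statements above the underlying $2$-planes meet in $\{0\}$, $\text{span}(e_i)$, or $\text{span}(f_j)$ in cases (a), (b), (c) respectively; intersecting with the positive cones, and using that $f_j$ is not a positive multiple of $e_i$, collapses these to $\{0\}$, $\text{cone}(e_i)$, and $\text{cone}(f_j)$ — the common face in each case. A ray $\text{cone}(e_i)$ meeting $\text{cone}(e_k, f_l)$ with $i\neq k$ is dispatched by linear independence of $e_i, e_k, f_l$ (case (c) again), and an $f$-ray analogously via (b). Ray-versus-ray intersections are immediate from linear independence of the two generators.

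The only real work is bookkeeping the incidence patterns between cones of $\mathscr P$ and matching each to the specific minor of $M$ that witnesses it; I do not expect any deeper obstacle, since once the pattern-to-minor dictionary is in place every case closes immediately from the hypothesis.
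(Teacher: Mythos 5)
Your proposal is correct and takes essentially the same approach as the paper: by symmetry one reduces the general-position claim to three intersection statements (disjoint edges, edges sharing an $e_i$, edges sharing an $f_j$), each of which comes down to nonvanishing of a specific $2\times 2$ minor of $M$, and the fan property is then a routine consequence. You work through the fan verification and cases (b), (c) explicitly, whereas the paper only details case (a) (phrased there as triviality of a kernel rather than nonvanishing of a determinant — the same minor either way) and dispatches the rest with ``similar'' and ``easy to check.'' One small slip worth noting: the sentence ``every $f_j$ has at least two nonzero entries, so $\dots$ any two $f$'s are linearly independent'' is a non-sequitur, since two vectors can each have two nonzero entries and still be proportional; the correct (and immediate) justification is that if $f_j$ and $f_{j'}$ were proportional, every $2\times 2$ minor of $M$ on rows $\{j,j'\}$ would vanish, contradicting the hypothesis. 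This does not affect the argument.
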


\begin{proof}
We show that every edge of $G$ is in general position. It is easy to check from this that $\mathscr{P}$ is a fan. By symmetry, it is enough to show that
\begin{eqnarray*}
\text{span}(e_1,f_1) \cap \text{span}(e_2,f_2)&=&0, \\
\text{span}(e_1,f_1) \cap \text{span}(e_1,f_2)&=&\text{span}(e_1),\\
\text{span}(e_1,f_1) \cap \text{span}(e_2,f_1)&=&\text{span}(f_1).
\end{eqnarray*}
This follows from direct computation. For example, the intersection $\text{span}(e_1,f_1) \cap \text{span}(e_2,f_2)$ is isomorphic to the kernel of the transpose of the submatrix $M_{\{1,2\},\{3,4\}}$, which is nonsingular by assumption. Therefore we have the first equality. The other two equalities can be shown in a similar way.
\end{proof}

If the determinant of $M$ is nonzero, then $G$ is locally extremal, and hence any two balanced edge-weights on $G$ are proportional.
For a randomly chosen $M$, the graph $G$ does not admit any nonzero balanced weight.

\begin{lemma}\label{BalancingK44}
If the columns of $M$ form an orthogonal basis of $\mathbb{R}^4$, then $G$ admits a nonzero balanced integral weight, unique up to a constant multiple.
\end{lemma}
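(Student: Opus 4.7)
The plan is to parameterize balanced weights in two stages. First I will use the balancing conditions at the vertices $f_1,\ldots,f_4$ (each of whose neighbors in $K_{4,4}$ are all four $e_i$'s) to force $w_{ij}$ to have the shape $c_j\, M_{ji}$ for some scalars $c_j$, reducing $16$ unknowns to $4$. Second, I will impose the balancing at each $e_i$ (whose neighbors are all four $f_j$'s) and invoke the column-orthogonality of $M$ to cut the four-parameter family down to a line.

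The first step is straightforward: at $f_j$ the condition $\sum_i w_{ij}\, e_i \in \mathbb{R}\cdot f_j$, combined with the expansion $f_j = \sum_i M_{ji}\, e_i$ and the linear independence of the $e_i$'s, determines $w_{ij}$ up to a single scalar $c_j$.

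For the second step, imposing $\sum_j c_j M_{ji} f_j \in \mathbb{R}\cdot e_i$ and collecting the coefficient of $e_k$ yields the system
\[
\sum_j c_j\, M_{ji}\, M_{jk} = 0 \qquad (k \neq i).
\]
I will interpret this as the statement that the vector $(c_j M_{ji})_{j=1}^{4} \in \mathbb{R}^4$ is orthogonal, in the standard inner product, to every column of $M$ except the $i$-th. The hypothesis that the columns of $M$ form an orthogonal basis then forces $(c_j M_{ji})_j$ to be a scalar multiple $\lambda_i$ of the $i$-th column, i.e.\ $c_j M_{ji} = \lambda_i M_{ji}$ for all $j$.

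From here, existence is immediate by taking $c_1 = \cdots = c_4 = 1$: the system collapses to the given orthogonality relations $\sum_j M_{ji} M_{jk} = 0$ for $k \neq i$, and the resulting weights $w_{ij} = M_{ji}$ are integers since $M$ is an integer matrix. The one subtle point — and the main obstacle for the uniqueness part — is to pass from $c_j M_{ji} = \lambda_i M_{ji}$ to the conclusion $c_1 = \cdots = c_4$: the relation directly gives $c_j = \lambda_i$ only when $M_{ji} \neq 0$, so one must chain such equalities together. This will go through provided that for any two row indices $j \ne j'$ there is a column index $i$ with both $M_{ji}$ and $M_{j'i}$ nonzero, a mild nonvanishing property on the entries of $M$ that is implicit in the construction (satisfied, for example, whenever $M$ has no zero entries, as is the case for a Hadamard-type orthogonal $M$ compatible with the genericity requirement of Lemma \ref{GenericMinors}). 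Under this hypothesis the $c_j$ all equal a single scalar, proving that the balanced integral weight is unique up to a constant multiple.
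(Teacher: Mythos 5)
Your existence argument is essentially the same as the paper's: both arrive at the explicit weight $w(e_if_j) = M_{ji}$, and both check that the balancing condition at $f_j$ is just the expansion of $f_j$ in the basis $\{e_i\}$, while the condition at $e_i$ is exactly the column-orthogonality of $M$. The uniqueness part, however, goes a genuinely different route. The paper disposes of it in one line by invoking ``the connectedness and the local extremality of $G$'', implicitly appealing to the machinery of Proposition~\ref{LocalGlobalExtremality}/\ref{GraphExtremality}; you instead solve the linear system by hand, reducing to four parameters $c_1,\ldots,c_4$ and showing they must all be equal.

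What your direct computation makes visible---and this is the valuable part---is that column-orthogonality alone does \emph{not} force uniqueness: from $c_jM_{ji}=\lambda_iM_{ji}$ one can only conclude $c_j=\lambda_i$ where $M_{ji}\neq 0$, so one needs the bipartite graph on the nonzero entries of $M$ to be connected (your pairwise shared-column condition is a sufficient, slightly stronger, version of this). This is a genuine point: if $M$ is, say, block diagonal with two $2\times2$ orthogonal blocks, the hypothesis of the lemma holds but the space of balanced weights is two-dimensional. The paper's one-line appeal to ``connectedness of $G$'' is thus a bit loose---what is really needed is connectedness of the support graph $G(F)$ of the resulting weighted fan, which holds for the specific $M$ the paper chooses ($K_{4,4}$ minus a perfect matching) but does not follow from orthogonality.

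Two small corrections to your write-up. First, the paper's $M$ does have zero entries (the whole diagonal), so it is not ``Hadamard-type''; the invocation of Hadamard matrices is misleading even though your condition does hold for the paper's $M$ (any two of its rows share two nonzero columns). You should verify this directly for the matrix at hand rather than gesturing at a different class of examples. Second, your pairwise condition is stronger than necessary: connectedness of the nonzero-pattern bipartite graph already lets you chain $c_j=\lambda_i=c_{j'}=\cdots$ along arbitrary paths, not just paths of length two. With those caveats, your proof is correct and arguably more careful than the paper's about the uniqueness step.
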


\begin{proof}
The uniqueness follows from the connectedness and the local extremality of $G$. We define edge-weights on $G$ by setting
\[
\left(\begin{array}{cccc} 
\text{w}(e_1f_1) &\text{w}(e_2f_1) & \text{w}(e_3f_1) & \text{w}(e_4f_1)\\
\text{w}(e_1f_2) &\text{w}(e_2f_2) & \text{w}(e_3f_2) & \text{w}(e_4f_2)\\
\text{w}(e_1f_3) &\text{w}(e_2f_3) & \text{w}(e_3f_3) & \text{w}(e_4f_3)\\
\text{w}(e_1f_4) &\text{w}(e_2f_4) & \text{w}(e_3f_4) & \text{w}(e_4f_4)\\
\end{array}\right):=M,
\]
where $\text{w}(e_if_j)$ denote the weight on the edge $e_if_j$.
It is straightforward to check that $G$ is balanced. For example, the balancing condition for $G$ at $f_1$ is
\[
f_1= f_{11} e_1+f_{12} e_2+f_{13} e_3+f_{14}e_4,
\]
and the balancing condition for $G$ at $e_1$ is
\[
(f_{11}^2+f_{21}^2+f_{31}^2+f_{41}^2)e_1=f_{11}f_1+f_{21}f_2+f_{31}f_3+f_{41}f_4.
\]
\end{proof}

Suppose that $M$ is an integral matrix all of whose $2 \times 2$ minors are nonzero. If columns of $M$ form an orthogonal basis of $\mathbb{R}^4$, then we can construct a balanced weighted fan $F$ on $\mathscr{P}$ using Lemmas \ref{GenericMinors} and \ref{BalancingK44}.
If furthermore all the entries of $M$ are either $0$ or $\pm 1$, then $F$ is unimodular, and if each row and column of $M$ contains at most one negative entry, then the negative edges of $G(F)$  are pairwise disjoint. 

As a concrete example, we take
\[
M
:=\left(
\begin{array}{rrrr}
0&1&1&1\\
1&0&-1&1\\
1&1&0&-1\\
1&-1&1&0\\
\end{array}
\right).
\]
The determinant of $M$ is $-9$, and the $2 \times 2$ minors of $M$ are
\begin{multline*}
-1,-1,-1,-1,-1,-1,-1,-1,
-1,-1,-1,-1,-1,-1,-1,-1,-1,-1,-1,\\
+1,+1,+1,+1,+1,
+1,+1,+1,+1,+1,+1,
-2,-2,-2,
+2,+2,+2.
\end{multline*}
It follows that $\mathscr{P}$ is a unimodular fan and all edges of $G$ are in general position.
The columns of $M$ form an orthogonal basis of $\mathbb{R}^4$, and hence $\mathscr{P}$ admits a balanced integral weight as in Lemma \ref{BalancingK44}. This defines a balanced weighted unimodular fan $F$.
The abstract graph of $G(F)$ is 
\[
\xymatrixcolsep{2pc}\xymatrixrowsep{3pc}
\xymatrix{
e_1 \ar@{-}[dr] \ar@{-}[drr] \ar@{-}[drrr]& e_2\ar@{-}[dl] \ar@{-}[dr] \ar@{--}[drr]& e_3\ar@{-}[dll] \ar@{--}[dl] \ar@{-}[dr]& e_4\ar@{-}[dlll] \ar@{-}[dll] \ar@{--}[dl]\\
f_1&f_2&f_3&f_4,
}
\]
where the three edges with negative weights are denoted by dashed lines.
Since negative edges of $G(F)$ are pairwise disjoint and in general position, we can construct the positive balanced weighted fan $\widetilde{F}$ from $F$.
We order the vertices of $G(\widetilde{F})$ by
\[
+e_1,+e_2,+e_3,+e_4,+f_1,+f_2,+f_3,+f_4,-e_2,-e_3,-e_4,-f_2,-f_3,-f_4.
\]
The tropical Laplacian of $G(\widetilde{F})$ is the symmetric  matrix
\[
\tiny
L_{G(\widetilde{F})}=\left(\begin{array}{rrrrrrrrrrrrrrrrrrr} 
3&0&0&0&0&-1&-1&-1&0&0&0&0&0&0 \\\\
0&3&0&0&-1&0&-1&0&0&0&0&0&0&-1 \\\\
0&0&3&0&-1&0&0&-1&0&0&0&-1&0&0 \\\\
0&0&0&3&-1&-1&0&0&0&0&0&0&-1&0 \\\\
0&-1&-1&-1&1&0&0&0&0&0&0&0&0&0 \\\\
-1&0&0&-1&0&1&0&0&0&-1&0&0&0&0 \\\\
-1&-1&0&0&0&0&1&0&0&0&-1&0&0&0 \\\\
-1&0&-1&0&0&0&0&1&-1&0&0&0&0&0 \\\\
0&0&0&0&0&0&0&-1&0&0&0&0&0&-1 \\\\
0&0&0&0&0&-1&0&0&0&0&0&-1&0&0 \\\\
0&0&0&0&0&0&-1&0&0&0&0&0&-1&0 \\\\
0&0&-1&0&0&0&0&0&0&-1&0&0&0&0 \\\\
0&0&0&-1&0&0&0&0&0&0&-1&0&0&0 \\\\
0&-1&0&0&0&0&0&0&-1&0&0&0&0&0
\end{array}
\right),
\]
and 
\[
\Big(n_+(\widetilde{F}), \ n_-(\widetilde{F}), \ n_0(\widetilde{F})\Big) = \big(7,3,4\big).
\]
We use the weighted fan $\widetilde{F}$ to construct the strongly positive closed current $\mathscr{T}$ in Theorem \ref{Main4}.

\begin{proof}[Proof of Theorem \ref{Main4}]
There is a refinement of $\widetilde{F}$ that is compatible with a complete fan $\Sigma_1$ on $\mathbb{R}^4$; this is a general fact on extension of fans \cite[Proposition 3.15]{Gil-Sombra}.
Applying toric Chow lemma \cite[Theorem 6.1.18]{Cox-Little-Schenck} and toric resolution of singularities \cite[Theorem 11.1.9]{Cox-Little-Schenck} to $\Sigma_1$ in that order, we can construct a subdivision $\Sigma_2$ of $\Sigma_1$ that defines a smooth projective toric variety $X$. Let $\mathscr{C}$ be the refinement of $\widetilde{F}$ that is compatible with $\Sigma_2$. Since $\mathscr{C}$ is a unimodular refinement of the $2$-dimensional unimodular weighted fan $\widetilde{F}$, it is obtained from $\widetilde{F}$ by repeated application of the construction $F \longmapsto F^+_{ij}$ in Section \ref{PlusConstruction}, see \cite[Lemma 10.4.2]{Cox-Little-Schenck}. Therefore $\mathscr{C}$ is strongly extremal, and by Proposition \ref{PositiveSignature}, 
\[
n_-(\mathscr{C})=n_-(\widetilde{F})=3.
\]
Let $\mathscr{T}:=\overline{\mathscr{T}}_{\mathscr{C}}$ be the tropical current on $X$ associated to the non-degenerate weighted fan $\mathscr{C}$.  We note that
\begin{enumerate}[(1)]
\item $\overline{\mathscr{T}}_\mathscr{C}$ is strongly positive because $\mathscr{C}$ is positive,
\item $\overline{\mathscr{T}}_\mathscr{C}$ is closed because $\mathscr{C}$ is balanced (Proposition \ref{ToricClosed}),
\item $\overline{\mathscr{T}}_\mathscr{C}$ is strongly extremal because $\mathscr{C}$ is strongly extremal (Proposition \ref{ToricExtremality}).
\end{enumerate}
We show that  $\overline{\mathscr{T}}_\mathscr{C}$ is not a weak limit of the form
\[
\lim_{i \to \infty} \mathscr{T}_i, \quad \mathscr{T}_i=\sum_j \lambda_{ij} [Z_{ij}],
\]
where $\lambda_{ij}$ are nonnegative real numbers and $Z_{ij}$ are irreducible surfaces in $X$.
If otherwise, since $\overline{\mathscr{T}}_\mathscr{C}$ is strongly extremal, Proposition \ref{Milman'sConverse} implies that
 there are nonnegative real numbers $\lambda_i$ and $p$-dimensional irreducible subvarieties $Z_i \subseteq X$  such that
\[
\overline{\mathscr{T}}_\mathscr{C}=\lim_{i \to \infty} \lambda_i [Z_i].
\]
Therefore, by Proposition \ref{OneNegativeEigenvalue}, the tropical Laplacian of $\{\overline{\mathscr{T}}_\mathscr{C}\}$ has at most one negative eigenvalue.
However, by Theorem \ref{Main3}, 
\[
\{\overline{\mathscr{T}}_\mathscr{C}\}=\mathscr{C},
\]
whose tropical Laplacian has three negative eigenvalues, a contradiction.
\end{proof}


\end{document}